\documentclass[letterpaper,reqno,12pt]{amsart}
\usepackage[margin=1.2in]{geometry}

\usepackage{amssymb, enumerate}
\usepackage[all]{xy}
\usepackage{hyperref}
\hypersetup{colorlinks=true}

\makeatletter
\@namedef{subjclassname@2020}{%
	\textup{2020} Mathematics Subject Classification}
\makeatother

\theoremstyle{plain}
\newtheorem{thm}{Theorem}[section] 
\newtheorem{cor}[thm]{Corollary}
\newtheorem{prop}[thm]{Proposition}

\newtheorem{lem}[thm]{Lemma}

\newtheorem*{data availability}{Data Availability Statement}

\theoremstyle{definition} 
\newtheorem{defn}[thm]{Definition}
\newtheorem{notation}[thm]{Notation}

\theoremstyle{remark}
\newtheorem{rem}[thm]{Remark}
\newtheorem{ques}[thm]{Question}
\newtheorem*{acknowledgement}{Acknowledgments}

\def\phi{\varphi}
\def\epsilon{\varepsilon}

\def\mapsto{\longmapsto}

\newcommand{\sO}{\mathcal{O}}

\newcommand{\N}{\mathbb{N}}
\newcommand{\Q}{\mathbb{Q}} 
\newcommand{\C}{\mathbb{C}} 
\newcommand{\R}{\mathbb{R}} 
\newcommand{\Z}{\mathbb{Z}}

\newcommand{\NN}{{}^*\!\mathbb{N}}

\newcommand{\ba}{\mathfrak{a}}
\newcommand{\m}{\mathfrak{m}}
\newcommand{\n}{\mathfrak{n}}
\newcommand{\p}{\mathfrak{p}}

\newcommand{\cl}[2]{\operatorname{cl}_{\rm u}^{#2^t}(0)_{#1}}

\newsavebox{\circlebox}
\savebox{\circlebox}{\fontencoding{OMS}\selectfont\Large\char13}
\newlength{\circleboxwdht}

\def\Spec{\operatorname{Spec}}

\def\ulim{\operatorname{ulim}}

\def\Ker{\operatorname{Ker}}
\def\Ann{\operatorname{Ann}}
\def\Ass{\operatorname{Ass}}
\def\Char{\operatorname{char}}
\def\H{\operatorname{H}}
\def\UH{\operatorname{UH}}

\def\id{\operatorname{id}}

\title{A characterization of multiplier ideals via ultraproducts}

\author{Tatsuki Yamaguchi}
\address{Graduate School of Mathematical Sciences, University of Tokyo, 3-8-1 Komaba, Meguro-ku, Tokyo 153-8914, Japan}
\email{tyama@ms.u-tokyo.ac.jp}


\subjclass[2020]{14B05, 14F18}

\begin{document}

\begin{abstract}
In this paper, using ultra-Frobenii, we introduce a variant of Schoutens' non-standard tight closure \cite{affine}, ultra-tight closure, on ideals of a local domain $R$ essentially of finite type over $\C$. We prove that the ultra-test ideal $\tau_{\rm u}(R,\ba^t)$, the annihilator ideal of all ultra-tight closure relations of $R$, coincides with the multiplier ideal $\mathcal{J}(\Spec R,\ba^t)$ if $R$ is normal $\Q$-Gorenstein. As an application, we study a behavior of multiplier ideals under pure ring extensions.
\end{abstract}

\maketitle


\section{Introduction}
  The notion of tight closure, introduced by Hochster and Huneke \cite{HH}, is a powerful tool in commutative algebra of positive characteristic.
Tight closure gives simple proofs of several important results, e.g. Hochster-Roberts theorem, Brian\c{c}on-Skoda theorem, and so on, in equal characteristic. We say that a ring $R$ of positive characteristic is (weakly) $F$-regular if every ideal of $R$ is tightly closed.
The test ideal $\tau(R)$ of $R$, which defines the non-$F$-regular locus of $R$, is the annihilator ideal of all tight closure relations of $R$.
\ On the other hand, the multiplier ideal $\mathcal{J}(X)$ of a variety $X$ over a field of characteristic zero is formulated in terms of resolution of singularities \cite{La} and defines the non-log terminal locus of $X$. Hara and Yoshida \cite{HY} generalized the notions of tight closure and test ideals to the pair setting, and proved the following: if $R$ is a normal $\Q$-Gorenstein local ring essentially of finite type over $\C$, $\ba$ is a non-zero ideal of $R$ and $t$ is a positive real number, then modulo $p$ reductions of the multiplier ideal $\mathcal{J}(\Spec R,\ba^t)$ coincide with the test ideals $\tau(R_p,\ba_p^t)$ of modulo $p$ reductions $(R_p,\ba_p^t)$ of the pair $(R,\ba^t)$ for sufficiently large primes $p$.
In this paper, we give an ultraproduct-theoretic analog of their result.\\
\ The idea of ultraproducts goes back to the construction of a non-standard model of arithmetic by Skolem \cite{Sko} in 1934, and the notion of ultraproducts was introduced by \L o\'{s} \cite{Los} in 1955. Schoutens used ultraproducts to prove various results on commutative algebra. For example, he gave a new proof of Hochster-Roberts theorem for rings of  equal characteristic zero and gave a new construction of a balanced big Cohen-Macaulay algebra.
He introduced in \cite{affine} the notion of non-standard hulls, which enables us to apply some arguments in positive characteristic to rings of equal characteristic zero.  Schoutens \cite{log-terminal} used non-standard hulls to define ultra-$F$-regularity, an ultraproduct-theoretic analog of $F$-regularity. Based on \cite{Hara}, \cite{Mehta-Srinivas}, \cite{F-rat}, he showed that if $R$ is a normal $\Q$-Gorenstein local ring essentially of finite type over $\C$, then $\Spec R$ has log terminal singularities if and only if $R$ is ultra-$F$- regular. He also showed that if $R\hookrightarrow S$ is a pure ring extension of $\Q$-Gorenstein local domains essentially of finite type over $\C$ and if $\Spec S$ has log terminal singularities, then so does $\Spec R$. The aim of this paper is to generalize his results on log terminal singularities to the pair setting. In particular, we will characterize multiplier ideals via ultraproducts.\\

 Let $R$ be a local domain essentially of finite type over $\C$, $\ba$ be a non-zero ideal of $R$ and $t$ be a positive real number.
Using ultra-Frobenii, we introduce a variant of Schoutens' non-standard tight closure \cite{affine}, ultra-tight closure, on ideals (modules) of $R$.
The ultra-test ideal $\tau_{\rm u}(R,\ba^t)$ is defined to be the annihilator ideal of all ultra-tight closure relations of $R$. We can easily see that $\tau_{\rm u}(R,R^{1})=R$ if and only if $R$ is ultra-$F$-regular in the sense of Schoutens.
The following is our main result.
\begin{thm}[Corollary \ref{main1}]\label{1.1}
Suppose that $R$ is normal $\Q$-Gorenstein.
Then
\begin{equation}\tau_{\rm u}(R,\ba^t)=\mathcal{J}(\Spec R,\ba^t).
\end{equation}
\end{thm}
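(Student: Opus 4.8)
The plan is to deduce the statement from the positive-characteristic comparison theorem of Hara and Yoshida by transporting it across an ultraproduct; write $\mathcal{J}:=\mathcal{J}(\Spec R,\ba^t)$ for the fixed finitely generated ideal of $R$ in question.

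\emph{Step 0 (descent).} Using the non-standard hull, realize $R$ as a faithfully flat subring of an ultraproduct $R_\infty=\ulim_w R_w$ in which, for almost all $w$, $R_w$ is a normal $\Q$-Gorenstein local domain essentially of finite type over an algebraically closed field of prime characteristic $p_w$, with $p_w\to\infty$. Spreading $R$, $\ba$, a fixed log resolution of $(\Spec R,\ba)$, and the canonical divisor out over a common finitely generated $\Z$-subalgebra, one arranges that $(R_w,\ba_w^t)$ is the modulo $p_w$ reduction of $(R,\ba^t)$ in the sense of \cite{HY}, that normality and the $\Q$-Gorenstein index are preserved, and that the reduction $\mathcal{J}_w$ of $\mathcal{J}$ is the multiplier ideal of the reduced pair. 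If $t$ is irrational, replace it at the outset by a rational number for which the relevant round-ups are unchanged, using discreteness of jumping numbers.

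\emph{Step 1 (the crux).} Prove that the ultra-test ideal is the contraction of the ultraproduct of the characteristic-$p$ test ideals:
\[
\tau_{\rm u}(R,\ba^t)\;=\;R\cap\ulim_w\tau(R_w,\ba_w^t).
\]
By construction $\tau_{\rm u}(R,\ba^t)$ is the annihilator of the ultra-tight closure of $0$ in a suitable module over $R$ --- the injective hull $E_R$ of the residue field, or the top local cohomology $\H^{\dim R}_{\m}(R)$ --- and ultra-tight closure is built from the ultra-Frobenii, i.e. from the ultraproduct of the ordinary Frobenii on the $R_w$. \L o\'s's theorem translates a single ultra-tight closure relation into tight closure relations in almost all $R_w$ and conversely; the delicate point is that a test ideal is an a priori infinite intersection, so one must know it is detected by finitely much data --- concretely, that $\tau(R_w,\ba_w^t)$ is the annihilator of tight closure of zero in one fixed module uniformly in $w$, and that this module and the formation of such annihilators commute with the ultraproduct. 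Establishing this compatibility, together with the attendant uniformity of test elements, is the main obstacle, and is naturally isolated as a theorem in its own right.

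\emph{Step 2 (conclusion).} By the theorem of Hara and Yoshida, $\tau(R_w,\ba_w^t)=\mathcal{J}_w$ for all $p_w\gg 0$, hence for almost all $w$, so $\ulim_w\tau(R_w,\ba_w^t)=\ulim_w\mathcal{J}_w$. Since $\mathcal{J}$ is a fixed finitely generated ideal, the non-standard hull recovers it from its reductions, $\mathcal{J}R_\infty=\ulim_w\mathcal{J}_w$, and faithful flatness of $R\hookrightarrow R_\infty$ gives $R\cap\ulim_w\mathcal{J}_w=\mathcal{J}$. Combining with Step 1,
\[
\tau_{\rm u}(R,\ba^t)\;=\;R\cap\ulim_w\tau(R_w,\ba_w^t)\;=\;R\cap\ulim_w\mathcal{J}_w\;=\;\mathcal{J}(\Spec R,\ba^t).
\]
Taking $\ba=R$ recovers the equivalence between ultra-$F$-regularity and log terminal singularities, the guiding special case.
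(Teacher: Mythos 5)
Your Step 1 is not a proof step: it is a restatement of the theorem's hard half, and you yourself flag it as ``the main obstacle, naturally isolated as a theorem in its own right.'' Concretely, the inclusion $R\cap\ulim_p\tau(R_p,\ba_p^t)\subseteq\tau_{\rm u}(R,\ba^t)$ does follow by the kind of transfer you describe (this is Proposition \ref{mult<test} in the paper, using Theorem \ref{approx of mult} and the reduction to finitely generated submodules of $E$ in Proposition \ref{fg}; the point is that the ultra-tight closure relation comes with a \emph{standard} test element $c\in R^\circ$, so specializing the quantifiers over $\epsilon\in\NN$ and $a\in\ba^{\lceil t\pi^\epsilon\rceil}$ to ultraproducts of putative counterexamples gives a contradiction). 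But the reverse inclusion $\tau_{\rm u}(R,\ba^t)\subseteq R\cap\ulim_p\tau(R_p,\ba_p^t)$ is exactly where \L o\'s's theorem fails you: the tight closure relations in the fibers involve test elements $c_p\in R_p^\circ$ that may vary with $p$ and need not be approximations of any single element of $R^\circ$; the injective hull $E$ is not finitely generated, so $\ulim_p E_{R_p}$ is much larger than $E$ and the formation of $0^{*\ba_p^t}_{E_{R_p}}$ and of its annihilator does not commute with the ultraproduct; and the definition of $\tau_{\rm u}$ quantifies over all of $\NN$, which is not a first-order finitary condition. Invoking ``uniformity of test elements'' names the problem without solving it, and no argument is offered.

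The paper does not prove your Step 1 identity directly; it circumvents it. The inclusion $\mathcal{J}(R,\ba^t)\subseteq\tau_{\rm u}(R,\ba^t)$ is obtained by the transfer argument above, while the converse $\tau_{\rm u}(R,\ba^t)\subseteq\mathcal{J}(R,\ba^t)$ is proved by a different route: first reduce to the quasi-Gorenstein case via the canonical cover $S=\bigoplus_{i=0}^{r-1}\omega_R^{(i)}$, showing $\tau_{\rm u}(R,\ba^t)=\tau_{\rm u}(S,(\ba S)^t)\cap R$ (Theorem \ref{canonical-cover}, using the $\Z/r\Z$-grading and the compatibility $S_\infty\cong S\otimes_R R_\infty$); then, in the quasi-Gorenstein case, use the parameter-ideal description $\tau_{\rm u}(R,\ba^t)=\bigcap_I I:\operatorname{cl}_{\rm u}^{\ba^t}(I)\subseteq\bigcap_I I:\operatorname{cl}_{\rm gen}^{\ba^t}(I)$ (Theorem \ref{test parameter}) and compare generic tight closure of parameter ideals with $\Ker\delta$ in local ultra-cohomology of a log resolution, so that $\bigcap_n\bigl({\bf x}^nR:\operatorname{cl}_{\rm gen}^{\ba^t}({\bf x}^nR)\bigr)\subseteq\Ann\Ker\delta=\mathcal{J}(R,\ba^t)$ (Theorem \ref{ultra-test-mult-quasi-Goren}, following Schoutens' pseudo-rationality argument and Hara--Yoshida). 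If you want to salvage your outline, you must either supply a proof of the missing inclusion of Step 1 or replace it by an argument of this kind; as written, the proposal assumes the crux of the theorem.
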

We briefly explain an idea of the proof of Theorem \ref{1.1}.
First, comparing modulo $p$ reductions of a multiplier ideal with its approximation, we see that test ideals of Hara and Yoshida give an approximation of the multiplier ideal.
The inclusion $\tau_{\rm u}(R,\ba^t)\supset\mathcal{J}(\Spec R,\ba^t)$ easily follows from this fact.
For the converse inclusion, we take the canonical cover of $R$ to reduce to the quasi-Gorenstein case and then use local ultra-cohomology.

As an application, of Theorem \ref{1.1}, we can show that multiplier ideals behave well under pure ring extensions. This result in characteristic zero is an analog of {\cite[Proposition1.12]{HY}}.
\begin{thm}[Corollary \ref{main2}]
Let $R, S$ be normal local domains essentially of finite type over $\C$, $\ba$ be a non-zero principal ideal of $R$ and $t$ be a positive real number. Assume that $R$ is $\Q$-Gorenstein and $R\hookrightarrow S$ is a pure ring extension. Then
\begin{equation}
\mathcal{J}(S,(\ba S)^t)\cap R\subseteq \mathcal{J}(R,\ba^t).
\end{equation}
Note that multiplier ideals can be defined without $\Q$-Gorenstein hypothesis (see Definition \ref{non-Q-Gorenstein}).
In particular, $\Spec S$ has Kawamata log terminal singularities in the sense of de Fernex-Hacon, then so does $\Spec R$.
\end{thm}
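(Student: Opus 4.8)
The plan is to deduce the statement from Theorem~\ref{1.1} by a purity argument modelled on the positive characteristic one of Hara and Yoshida \cite[Proposition~1.12]{HY}. Since $R$ is normal $\Q$-Gorenstein, Theorem~\ref{1.1} identifies $\mathcal{J}(\Spec R,\ba^t)=\tau_{\rm u}(R,\ba^t)$, so the asserted inclusion follows once we know
\[
\mathcal{J}(S,(\ba S)^t)\ \subseteq\ \tau_{\rm u}(S,(\ba S)^t)
\qquad\text{and}\qquad
\tau_{\rm u}(S,(\ba S)^t)\cap R\ \subseteq\ \tau_{\rm u}(R,\ba^t).
\]
The final assertion is then the case $\ba=R$ (a non-zero principal ideal): if $\Spec S$ is KLT in the sense of de Fernex--Hacon, i.e.\ $\mathcal{J}(\Spec S)=S$, then $R=S\cap R\subseteq\mathcal{J}(\Spec R)$; as $R$ is $\Q$-Gorenstein its de Fernex--Hacon multiplier ideal is the usual one and is an ideal of $R$, so $\mathcal{J}(\Spec R)=R$ and $\Spec R$ is KLT.

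The main content is the second inclusion, $\tau_{\rm u}(S,(\ba S)^t)\cap R\subseteq\tau_{\rm u}(R,\ba^t)$. I would deduce it from two facts. (i) \emph{Functoriality of ultra-tight closure along $R\hookrightarrow S$}: for every ideal $I$ of $R$, the ultra-tight closure $I^{*_{\rm u}}$ of $I$ (taken with coefficient $\ba^t$) satisfies $I^{*_{\rm u}}S\subseteq(IS)^{*_{\rm u}}$, where the right-hand closure is taken in $S$ with coefficient $(\ba S)^t$, and similarly for submodules. This should be read off the construction: because $R\hookrightarrow S$ comes, after localization, from a morphism of finitely generated $\C$-algebras, it descends to a morphism of compatible approximations, so the ultra-Frobenius on the non-standard hull of $S$ restricts to that of $R$; an ultra-tight closure relation over $R$ therefore persists over $S$, its nonzero test coefficient remaining nonzero since $S$ is a domain containing $R$. (ii) \emph{Contraction under a pure extension}: $IS\cap R=I$, because $R/I\hookrightarrow(R/I)\otimes_R S$ is injective. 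Granting these, let $c\in\tau_{\rm u}(S,(\ba S)^t)\cap R$, let $I\subseteq R$ and $x\in I^{*_{\rm u}}$. By (i), $x\in(IS)^{*_{\rm u}}$, so since $c$ annihilates every ultra-tight closure relation of $S$ we get $cx\in IS$; and $cx\in R$, so (ii) gives $cx\in IS\cap R=I$. As $I$ and $x$ were arbitrary (the module case being the same), $c$ annihilates every ultra-tight closure relation of $R$, i.e.\ $c\in\tau_{\rm u}(R,\ba^t)$.

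For the first inclusion, $\mathcal{J}(S,(\ba S)^t)\subseteq\tau_{\rm u}(S,(\ba S)^t)$, note that $S$ is only assumed normal, so Theorem~\ref{1.1} does not apply to it directly; instead I would unwind Definition~\ref{non-Q-Gorenstein}. The de Fernex--Hacon multiplier ideal is the sum $\sum_{\Delta}\mathcal{J}((S,\Delta),(\ba S)^t)$ over effective boundaries $\Delta$ with $K_S+\Delta$ $\Q$-Cartier, and since $\ba$ is principal each summand is the multiplier ideal of a log $\Q$-Gorenstein pair (absorbing $(\ba S)^t$ into the boundary). For each such $\Delta$, the mod-$p$ reduction of $\mathcal{J}((S,\Delta),(\ba S)^t)$ is contained in the Hara--Yoshida test ideal $\tau(S_p,(\ba S)_p^t)$ for almost all $p$ — apply Hara--Yoshida to the log $\Q$-Gorenstein pair $(S,\Delta)$ and then drop the effective boundary — and by the same mechanism that proves the easy half of Theorem~\ref{1.1} this forces $\mathcal{J}((S,\Delta),(\ba S)^t)\subseteq\tau_{\rm u}(S,(\ba S)^t)$. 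Summing over $\Delta$ gives the inclusion; note that only the ``easy'' direction is used, so no $\Q$-Gorenstein hypothesis on $S$ is needed. Combining everything,
\[
\mathcal{J}(S,(\ba S)^t)\cap R\ \subseteq\ \tau_{\rm u}(S,(\ba S)^t)\cap R\ \subseteq\ \tau_{\rm u}(R,\ba^t)\ =\ \mathcal{J}(\Spec R,\ba^t).
\]

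The step I expect to be most delicate is fact (i): one must set up the non-standard hulls and ultra-Frobenii of $R$ and $S$ relative to the extension $R\hookrightarrow S$ so that ultra-tight closure relations genuinely transport, and verify that $\tau_{\rm u}(S,(\ba S)^t)$ still annihilates the transported relations. By comparison, the contraction in (ii) is formal purity, the equality $\mathcal{J}(\Spec R,\ba^t)=\tau_{\rm u}(R,\ba^t)$ is Theorem~\ref{1.1}, and the comparison $\mathcal{J}(S,(\ba S)^t)\subseteq\tau_{\rm u}(S,(\ba S)^t)$ reuses only the easy half of that theorem — the one extra point there being the boundary bookkeeping in the non-$\Q$-Gorenstein setting.
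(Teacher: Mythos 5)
Your overall route coincides with the paper's: you split the statement into (a) $\mathcal{J}(S,(\ba S)^t)\subseteq\tau_{\rm u}(S,(\ba S)^t)$ for the not necessarily $\Q$-Gorenstein ring $S$, obtained from the de Fernex--Hacon definition together with the Hara--Yoshida/approximation mechanism (this is exactly the paper's remark that the arguments of Theorem \ref{approx of mult} and Proposition \ref{mult<test} still apply), (b) the contraction $\tau_{\rm u}(S,(\ba S)^t)\cap R\subseteq\tau_{\rm u}(R,\ba^t)$ from purity (this is Theorem \ref{pure-test}), and (c) the identification $\tau_{\rm u}(R,\ba^t)=\mathcal{J}(R,\ba^t)$ of Corollary \ref{main1}. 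Your packaging of (b) --- persistence of ultra-tight closure along $R\hookrightarrow S$ plus $IS\cap R=I$ --- is only cosmetically different from the paper's direct comparison of the kernels defining $\cl{M}{\ba}$ and $\cl{{S\otimes_R M}}{(\ba S)}$ inside $M\hookrightarrow S\otimes_R M$.

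There is, however, a gap in your justification of fact (i), at precisely the point where the hypothesis that $\ba$ is principal must enter. A witness for $x\in\operatorname{cl}_{\rm u}^{\ba^t}(I)$ gives $c\,aF^\epsilon(x)\in F^\epsilon(I)R_\infty$ only for $a\in\ba^{\lceil t\pi^\epsilon\rceil}$, hence, after pushing into $S_\infty$, only for $a\in\ba^{\lceil t\pi^\epsilon\rceil}S_\infty$; but membership in $\operatorname{cl}_{\rm u}^{(\ba S)^t}(IS)$ requires the condition for \emph{all} $a\in(\ba S)^{\lceil t\pi^\epsilon\rceil}$, and in general one only has $\ba^{\lceil t\pi^\epsilon\rceil}S_\infty\subseteq(\ba S)^{\lceil t\pi^\epsilon\rceil}$. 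Equality holds because $\ba$ is principal (for finite extensions it is Lemma \ref{ideal finite extension}), and this is exactly where the paper uses principality in Theorem \ref{pure-test}; your sketch instead locates the delicacy of (i) in setting up compatible non-standard hulls, and the phrase ``read off the construction'' would wrongly suggest persistence for arbitrary $\ba$. Two smaller points should be made explicit: the step ``$c$ annihilates every ultra-tight closure relation of $S$'' needs $\tau_{\rm u}(S,(\ba S)^t)\subseteq\Ann_S\cl{{S/IS}}{(\ba S)}$, i.e.\ the finitistic comparison of Proposition \ref{M<E} applied to $S$ (its proof does not use the $\Q$-Gorenstein hypothesis, but the statement in the paper is given only for $\Q$-Gorenstein pairs, so this must be observed); and the final implication ``$c$ annihilates all relations in ideals/finitely generated modules of $R$, hence $c\in\tau_{\rm u}(R,\ba^t)$'' is not a definition-chase, since $\tau_{\rm u}$ is defined via $E$ --- it is Corollary \ref{finitistic ultra} ($\tau_{\rm u}=\tau_{\rm fu}$), which is a second essential use of the $\Q$-Gorenstein hypothesis on $R$ beyond Corollary \ref{main1}. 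With these repairs your argument is correct and is essentially the paper's proof.
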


\begin{small}
\begin{acknowledgement}
The author wishes to express his gratitude to his supervisor Professor Shunsuke Takagi for his encouragement, valuable advice and suggestions. The author is also grateful to Tatsuro Kawakami, Kenta Sato and Shou Yoshikawa for useful conversations. Without their continuous help, this paper would not have been possible. He also thanks the referee who provided useful comments and suggestions.
\end{acknowledgement}
\end{small}
\section{Ultraproducts}
 In this section, we quickly review basic notions from the theory of ultraproduct. The reader is referred to \cite{affine}, \cite{use of ultraproducts} for details.
We fix an infinite set $W$. Let $\mathcal{P}(W)$ be the power set of $W$.
\begin{defn}
A nonempty subset $\mathcal{F} \subseteq \mathcal{P}(W)$ is called a {\it filter} if the following two conditions hold.
\begin{enumerate}
\item [$(\textup{i})$]If $A, B \in \mathcal{F}$, then $A \cap B\in \mathcal{F}$.
\item [$(\textup{ii})$]If $A \in \mathcal{F}, A \subseteq B \subseteq W$, then $B \in \mathcal{F}$.
\end{enumerate}
\end {defn}

\begin{defn}
\item [$(\textup{i})$]A filter $\mathcal{F}$ is called an {\it ultrafilter} if for all $A \in \mathcal{P}(W)$, $A \in \mathcal{F}$ or $A^c \in \mathcal{F}$.
\item [$(\textup{ii})$]A filter$\mathcal{F}$ is called {\it principal} if there exists a finite subset $A\subseteq W$ such that $A \in \mathcal{F}$.
\end{defn}
 From now on, we only consider a non-principal ultrafilter on $W$. Let $A_w$ be a family of sets indexed by $W$. We can define an equivalence relation $\sim$ on $\prod A_w $ by
 \begin{equation}
 (a_w)\sim(b_w) \text{ if and only if } \{w|a_w=b_w\} \in \mathcal{F}.
 \end{equation}
\begin{defn}
Let $A_w$ be a family of sets indexed by $W$ and $\mathcal{F}$ be a non-principal ultrafilter on $W$. The {\it ultraproduct} of $(A_w)$ is defined by 
\begin{equation}
\ulim A_w = A_{\infty} := \prod A_w/\sim.
\end{equation}
 We denote the equivalence class of $(a_w)$ by $\ulim a_w$.
 \end{defn}
 For a family of maps $f_w:A_w\to B_w$ indexed by $W$, we can also define the ultraproduct of maps $f_\infty :A_\infty \to B_\infty$ by
 \begin{equation}
 f_\infty (\ulim a_w):= \ulim f_w(a_w).
 \end{equation}
 If each $A_w$ has an operation $*_w:A_w\times A_w\to A_w$, then we can define the natural operator $*_\infty$on $A_\infty$ by
 \begin{equation}
 (\ulim a_w) *_\infty (\ulim b_w):=\ulim (a_w *_w b_w).
 \end{equation}
 For example, if each $A_w$ is a (commutative) ring, we can prove $A_\infty$ has the natural (commutative) ring structure.
\begin{defn}
Let $\mathcal{P}$ be a some property. We say that $a_w$ satisfies $\mathcal{P}$ for {\it almost all} $w$ if $\{w|\mathcal{P}(a_w)\text{ holds}\}\in\mathcal{F}$.
\end{defn}
An important property of ultraproducts is stated in \L o\'{s} theorem. We will state some weaker version of \L o\'{s} theorem that Schoutens stated in his paper.
\begin{thm}[{\cite[Theorem 2.3]{affine}}]
Suppose that $C$ is a ring and $A_\infty$ is the ultraproduct of $C$-algebras $A_w$. Let $f$ be a polynomial in $n$ variables with coefficients in $C$ and for each $w$ we take an $n$-tuple ${\bf a}_w$ of elements of $A_w$. Then $f(\ulim {\bf a}_w)=0 $ in $ A_\infty $ if and only if $ f({\bf a}_w)=0$ in $A_w$ for almost all w.
\end{thm}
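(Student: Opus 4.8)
The plan is to deduce the statement from a single observation: forming polynomial expressions commutes with passing to the ultraproduct. Once that is established, the desired equivalence is read off directly from the definition of the equivalence relation $\sim$.

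First I would fix notation. Write $f=\sum_{\alpha}c_\alpha X^{\alpha}$ as a finite sum over multi-indices $\alpha=(\alpha_1,\dots,\alpha_n)$ with $c_\alpha\in C$, and let $\phi_w\colon C\to A_w$ be the structure homomorphism exhibiting $A_w$ as a $C$-algebra. By construction $A_\infty$ is a $C$-algebra via $c\mapsto\ulim\phi_w(c)$, so for an $n$-tuple $\mathbf{a}_w=(a_{w,1},\dots,a_{w,n})\in A_w^{\,n}$ we have $f(\mathbf{a}_w)=\sum_{\alpha}\phi_w(c_\alpha)\,a_{w,1}^{\alpha_1}\cdots a_{w,n}^{\alpha_n}$ in $A_w$, while $f(\ulim\mathbf{a}_w)$ is computed in $A_\infty$ from $\ulim\mathbf{a}_w:=(\ulim a_{w,1},\dots,\ulim a_{w,n})$.

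The key step is the identity $f(\ulim\mathbf{a}_w)=\ulim f(\mathbf{a}_w)$ in $A_\infty$. This is a short induction that uses nothing beyond the componentwise definition of the operations on $A_\infty$: from $(\ulim x_w)(\ulim y_w)=\ulim(x_w y_w)$ one gets $(\ulim a_{w,i})^{\alpha_i}=\ulim a_{w,i}^{\alpha_i}$ by induction on $\alpha_i$; multiplying these together and then by $\ulim\phi_w(c_\alpha)$ produces the $\alpha$-th monomial of $f$ evaluated at $\ulim\mathbf{a}_w$, again by componentwise multiplication; and since the sum over $\alpha$ is finite, componentwise addition assembles these monomials into $\ulim f(\mathbf{a}_w)$. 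Here one only needs that $\mathcal{F}$ is closed under finite intersection, which guarantees that $\ulim(-)$ is independent of the chosen representatives.

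Granting the identity, the equivalence is immediate: $f(\ulim\mathbf{a}_w)=0$ in $A_\infty$ says $\ulim f(\mathbf{a}_w)=\ulim 0_{A_w}$, and by the definition of $\sim$ this holds exactly when $\{w\mid f(\mathbf{a}_w)=0\text{ in }A_w\}\in\mathcal{F}$, i.e.\ when $f(\mathbf{a}_w)=0$ in $A_w$ for almost all $w$. I do not expect a genuine obstacle here: the whole argument is bookkeeping with the componentwise operations, and the one thing worth flagging is that it uses only the filter axioms, not the ultrafilter (dichotomy) property — the latter is what is really needed for the full \L o\'{s} theorem applied to arbitrary first-order formulas, whereas for polynomial equations the filter structure already suffices.
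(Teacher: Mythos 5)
Your proof is correct: the identity $f(\ulim\mathbf{a}_w)=\ulim f(\mathbf{a}_w)$ follows from the componentwise definition of the operations, and the equivalence then is immediate from the definition of $\sim$, with your side remark (only the filter axioms are needed for a single polynomial equation, the ultrafilter dichotomy being relevant only for the full \L o\'{s} theorem) also accurate. The paper itself offers no proof of this statement --- it simply cites Schoutens \cite{affine} --- and your bookkeeping argument is exactly the standard one that proof amounts to, so there is nothing to correct.
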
 
 At the end of this section, we will state the important theorems about ultraproducts.
\begin{thm}[{\cite[2.8.2]{affine}}]
If almost all $K_w$ are algebraically closed field, then $K_\infty$ is algebraically closed.
\end{thm}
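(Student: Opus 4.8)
The plan is to reduce everything to the polynomial form of \L o\'{s}'s theorem recalled above, after first checking that $K_\infty$ is a field. Set $G := \{\,w \mid K_w \text{ is an algebraically closed field}\,\}$, so that $G \in \mathcal{F}$ by hypothesis and hence $G^c \notin \mathcal{F}$; in particular every member of $\mathcal{F}$ meets $G$. Since each $K_w$ is in particular a commutative ring, $K_\infty$ carries a natural commutative ring structure by the discussion preceding the statement. To see it is a field, note that $1 \neq 0$ in $K_\infty$ because $\{\,w \mid 1_{K_w} = 0_{K_w}\,\}$ is disjoint from $G$ and so not in $\mathcal{F}$; and if $a = \ulim a_w \neq 0$, then $\{\,w \mid a_w \neq 0\,\} \in \mathcal{F}$ because $\mathcal{F}$ is an ultrafilter, so choosing $b_w := a_w^{-1}$ on the intersection of this set with $G$ and $b_w := 0$ elsewhere we get $a_w b_w = 1$ for almost all $w$; applying \L o\'{s}'s theorem to the polynomial $f(X,Y) = XY - 1$ over $C = \Z$ then gives $a \cdot \ulim b_w = 1$. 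Thus $K_\infty$ is a field.

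For the algebraic closedness, let $g(X) = X^n + c_{n-1}X^{n-1} + \cdots + c_0 \in K_\infty[X]$ with $n \ge 1$; since $K_\infty$ is a field we may assume $g$ monic. Write $c_i = \ulim c_{i,w}$ with $c_{i,w} \in K_w$. For each $w \in G$ the polynomial $g_w(X) := X^n + c_{n-1,w}X^{n-1} + \cdots + c_{0,w} \in K_w[X]$ has a root $\alpha_w \in K_w$; put $\alpha_w := 0$ for $w \notin G$ and $\alpha := \ulim \alpha_w \in K_\infty$. Now apply \L o\'{s}'s theorem to the single polynomial $F(X, Y_0, \ldots, Y_{n-1}) := X^n + Y_{n-1}X^{n-1} + \cdots + Y_0 \in \Z[X, Y_0, \ldots, Y_{n-1}]$ and the $(n+1)$-tuples $(\alpha_w, c_{0,w}, \ldots, c_{n-1,w})$: since $F(\alpha_w, c_{0,w}, \ldots, c_{n-1,w}) = g_w(\alpha_w) = 0$ for almost all $w$, we conclude $g(\alpha) = F(\alpha, c_0, \ldots, c_{n-1}) = 0$ in $K_\infty$. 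Hence every monic polynomial of positive degree over $K_\infty$ has a root, so $K_\infty$ is algebraically closed.

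There is no serious obstacle here; the only point requiring care is that the version of \L o\'{s}'s theorem at our disposal concerns only the vanishing of polynomials, so each assertion must be packaged as the (non-)vanishing of an explicit integer-coefficient polynomial evaluated at a tuple of the $a_w$'s --- once this is done the argument is purely formal. (Alternatively one could invoke the full first-order \L o\'{s} theorem together with the fact that ``algebraically closed field'' is an elementary class, but the polynomial version used above is enough.)
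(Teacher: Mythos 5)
Your proof is correct: the paper itself states this result only as a citation of Schoutens (\cite[2.8.2]{affine}) and gives no proof, and your argument --- first checking that $K_\infty$ is a field and then producing a root of a monic polynomial by choosing roots $\alpha_w$ coordinatewise and applying the polynomial form of \L o\'{s}' theorem with $C=\Z$ --- is exactly the standard argument underlying the cited result. The only points needing the non-principality of the ultrafilter (so that no finite set, in particular $\emptyset$, lies in $\mathcal{F}$) are handled correctly, so there is no gap.
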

\begin{thm}[Lefschetz principle, {\cite[Theorem 2.4]{affine}}]
Let $W$ be the set of prime numbers endowed with some non-principal ultrafilter. Then
\begin{equation}
\ulim \mathbb{F}_p^{\text{alg}}\cong \mathbb{C}.
\end{equation}
\end{thm}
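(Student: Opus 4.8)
The plan is to identify $K_\infty := \ulim \F_p^{\text{alg}}$ (the ultraproduct over the set $W$ of primes with respect to the fixed non-principal ultrafilter $\mathcal{F}$) with $\C$ by invoking Steinitz's classification of algebraically closed fields: such a field is determined up to isomorphism by its characteristic together with its transcendence degree over the prime field, and an uncountable algebraically closed field has transcendence degree equal to its cardinality. So I would reduce the statement to three points: $(a)$ $K_\infty$ is algebraically closed; $(b)$ $\Char K_\infty = 0$; $(c)$ $|K_\infty| = 2^{\aleph_0} = |\C|$. Granting these, $K_\infty$ and $\C$ are both algebraically closed of characteristic $0$ and, being uncountable, both of transcendence degree $2^{\aleph_0}$ over $\Q$, hence isomorphic.

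Points $(a)$ and $(b)$ are soft. For $(a)$, each $\F_p^{\text{alg}}$ is an algebraically closed field, so $K_\infty$ is algebraically closed by {\cite[2.8.2]{affine}}. For $(b)$, I would fix a prime $\ell$ and apply \L o\'{s}' theorem (with $C=\Z$, to the polynomial $f(x)=\ell\cdot x$ evaluated at the tuple $(1)_p$): the relation $\ell\cdot 1 = 0$ holds in $\F_p^{\text{alg}}$ precisely when $p=\ell$, and $\{p\in W \mid p=\ell\}=\{\ell\}$ is finite, hence not in $\mathcal{F}$; therefore $\ell\cdot 1 \neq 0$ in $K_\infty$. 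As $\ell$ was arbitrary, $\Char K_\infty = 0$.

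Point $(c)$ is the crux. The upper bound is formal: since $W$ is countable and each $\F_p^{\text{alg}}$ is a countable algebraic closure of a finite field, $|K_\infty| \le \bigl|\prod_p \F_p^{\text{alg}}\bigr| = \aleph_0^{\,\aleph_0} = 2^{\aleph_0}$. For the reverse inequality I would exhibit an injection $\{0,1\}^{\N} \hookrightarrow K_\infty$. Fix an enumeration $W = \{w_0, w_1, w_2, \dots\}$ and, for each $n$, an injection $\gamma_n\colon \N \hookrightarrow \F_{w_n}^{\text{alg}}$ (possible since $\F_{w_n}^{\text{alg}}$ is infinite), and to $x = (x_j)_{j \in \N} \in \{0,1\}^{\N}$ assign $a_x := \ulim_n \gamma_n\!\bigl(\sum_{j=0}^{n} x_j 2^j\bigr) \in K_\infty$. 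If $x \neq y$ and $j_0$ is least with $x_{j_0} \neq y_{j_0}$, then for every $n \ge j_0$ the integers $\sum_{j=0}^n x_j 2^j$ and $\sum_{j=0}^n y_j 2^j$ differ (their binary expansions disagree at position $j_0$), so injectivity of $\gamma_n$ makes the $n$-th coordinates of $a_x$ and $a_y$ differ; as $\{n \mid n \ge j_0\}$ is cofinite and $\mathcal{F}$ is non-principal, this set belongs to $\mathcal{F}$, whence $a_x \neq a_y$. Thus $|K_\infty| \ge 2^{\aleph_0}$, and $(c)$ follows.

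The one step that needs genuine care is the lower bound in $(c)$ — specifically the use of non-principality of $\mathcal{F}$ to upgrade ``the coordinates are eventually distinct'' to ``the classes are distinct in the ultraproduct''; everything else reduces quickly to \L o\'{s}' theorem, {\cite[2.8.2]{affine}}, and standard facts about algebraically closed fields.
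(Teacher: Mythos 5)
Your proposal is correct and is exactly the standard argument for this statement: the paper itself gives no proof, citing \cite[Theorem 2.4]{affine}, and the proof there proceeds just as you do, via algebraic closedness of the ultraproduct, characteristic zero by \L o\'{s}/non-principality, a cardinality count giving $2^{\aleph_0}$, and Steinitz's theorem. Your handling of the key point (cofinite sets lie in a non-principal ultrafilter, so the binary-expansion coding injects $\{0,1\}^{\N}$ into $K_\infty$) is sound, so nothing is missing.
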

\section {Non-standard hulls and approximations}
In this section, we consider the set $\mathcal{P}$ of prime numbers endowed with some non-principal ultrafilter as an infinite set $W$ in the section 2. Suppose that $B$ is an algebra of finite type over $\mathbb{C}$. Then we can define the non-standard hull $B_\infty$ of $B$ and an approximation $B_p$ of $B$ as in \cite[3.4]{affine}. If $\p$ is a prime ideal of $B$, then $\p B_\infty$ is a prime ideal of $B_\infty$.
\begin{defn}[{\cite[4.3]{affine}}]
Suppose that $B$ is an algebra of finite type over $\C$, $\p$ is a prime ideal of $B$ and $R$ is a localization $B_\p$ of $B$.
Then the {\it non-standard hull} $R_\infty$ of $R$ is defined by
\begin{equation}
R_\infty=(B_\infty)_{\p B_\infty}.
\end{equation}
An {\it approximation} $R_p$ is defined as in \cite[4.3]{affine}. Then $R_\infty=\ulim R_p$.
\end{defn}
\begin{prop}[{\cite[4.3]{affine}}] Let the notation be as above. We list basic properties of the non-standard hull of the local algebra $R$.
\begin{enumerate}
 \item[$(\textup{i})$] $R_\infty =\ulim R_p$, and $R\hookrightarrow R_\infty$ is a faithfully flat extension.
 \item[$(\textup{ii})$] $R_p$ are local rings essentially of finite type over $\mathbb{F}_p^{\text{alg}}$ for almost all $p$.
\end{enumerate}
\end{prop}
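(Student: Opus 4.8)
The plan is to reduce to the construction recalled above and then verify two things: that the ideal of $B_p$ approximating $\p$ is prime for almost all $p$, and that localization commutes with the ultraproduct operation. Fix a presentation $B\cong\C[x_1,\dots,x_n]/I$ with $I$ finitely generated, let $B_p=\F_p^{\text{alg}}[x_1,\dots,x_n]/I_p$ be the approximation of $B$, so that $B_\infty=\ulim B_p$; writing $\p=(g_1,\dots,g_r)B$ and letting $\p_p\subseteq B_p$ be the ideal generated by the approximations of the $g_i$, finite generation of $\p$ gives $\p B_\infty=\ulim\p_p$, and hence $B_\infty/\p B_\infty=\ulim(B_p/\p_p)$. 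By hypothesis $R=B_\p$, and the approximation is $R_p=(B_p)_{\p_p}$.

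I would prove (ii) first, since its conclusion is needed even to make sense of $R_p$ and of the identity in (i). Record the elementary fact that an ultraproduct $\ulim A_p$ is a nonzero domain if and only if $A_p$ is a nonzero domain for almost all $p$; both implications follow at once from the fact that an ultrafilter $\mathcal F$ satisfies $X\cup Y\in\mathcal F\Rightarrow X\in\mathcal F$ or $Y\in\mathcal F$. Since $\p B_\infty$ is a prime ideal of $B_\infty$ (recalled above), $B_\infty/\p B_\infty=\ulim(B_p/\p_p)$ is a nonzero domain, hence $B_p/\p_p$ is a domain, i.e.\ $\p_p$ is prime, for almost all $p$. For such $p$ the ring $R_p=(B_p)_{\p_p}$ is the localization of the finite-type $\F_p^{\text{alg}}$-algebra $B_p$ at a prime ideal, so it is a local ring essentially of finite type over $\F_p^{\text{alg}}$.

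For (i), the identity $R_\infty=\ulim R_p$ is precisely the statement that localization commutes with the ultraproduct here. I would establish it by exhibiting the natural map $\ulim(a_p/s_p)\mapsto(\ulim a_p)/(\ulim s_p)$ from $\ulim(B_p)_{\p_p}$ to $(B_\infty)_{\p B_\infty}$ — well defined because $s_p\notin\p_p$ for almost all $p$ forces $\ulim s_p\notin\ulim\p_p=\p B_\infty$ — and checking that it is an isomorphism; well-definedness, injectivity and surjectivity each reduce, via \L o\'{s} theorem, to the observation that a single polynomial equation (or a single non-membership in $\p_p$) holds in $B_p$ for almost all $p$ exactly when the corresponding statement holds in $B_\infty$. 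For the faithful flatness of $R\hookrightarrow R_\infty$: the extension $B\hookrightarrow B_\infty$ is faithfully flat \cite{affine}, hence so is $B_\p\to B_\infty\otimes_B B_\p$ by base change; since $B_\infty/\p B_\infty=(B/\p)_\infty$ and $B/\p$ injects into its non-standard hull, we obtain $\p B_\infty\cap B=\p$, so $\p(B_\infty\otimes_B B_\p)$ is a prime ideal, the map $R=B_\p\to R_\infty$ makes sense, and $R_\infty$ is the localization of $B_\infty\otimes_B B_\p$ at that prime. The latter localization map is flat, so the composite $R\to R_\infty$ is flat; it is a local homomorphism of local rings because the maximal ideal of $R_\infty$ contracts to $\m_R=\p B_\p$; and a flat local homomorphism of local rings is faithfully flat, which finishes (i).

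The difficulty is organizational rather than conceptual: one must keep careful track of how localizations and quotients interact with $\ulim(-)$, and in particular one genuinely needs the contraction identity $\p B_\infty\cap B=\p$. The two substantive inputs, both recalled or citable above, are the faithful flatness of $B\to B_\infty$ for finite-type $B$ (which rests on uniform Noetherian bounds) and the primeness of $\p B_\infty$; granting these, every remaining step is a routine application of \L o\'{s} theorem.
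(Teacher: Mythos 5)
Your proposal is correct, and since the paper gives no argument for this proposition---it is quoted directly from Schoutens \cite[4.3]{affine}---there is nothing to compare it against except that source, whose derivation it essentially reproduces: (ii) from the primeness of $\mathfrak{p}B_\infty$ together with \L o\'{s}' theorem, and (i) by identifying $\ulim (B_p)_{\mathfrak{p}_p}$ with $(B_\infty)_{\mathfrak{p}B_\infty}$ and base-changing/localizing the faithful flatness of the affine extension $B\hookrightarrow B_\infty$. The two external inputs you invoke (primeness of $\mathfrak{p}B_\infty$, recalled in Section 3, and faithful flatness of $B\hookrightarrow B_\infty$ from \cite{affine}) are legitimately prior to the statement, so your reduction is not circular.
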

\begin{defn}
\begin{enumerate}
 \item[$(\textup{i})$]For $f \in R$, we can write $f=\ulim f_p$ through $R\hookrightarrow R_\infty$. We call the collection of $f_p$ an {\it approximation} of $f$.
 \item[$(\textup{ii})$]For an ideal $\mathfrak{a}\subseteq R$ such that $\mathfrak{a}=(a_1, \dots ,a_n)$, the collection of $\mathfrak{a}_p:=(a_{1p},\dots, a_{np})$ is called an {\it approximation} of the ideal $\mathfrak{a}$. Then $\ulim \mathfrak{a}_p=\mathfrak{a} R_\infty$.
\end{enumerate}
\end{defn}
\begin{prop}[{\cite[Theorem 4.6]{affine}}]
 Let $R$ be a local ring essentially of finite type over $\C$ with an approximation $R_p$. If $R$ is regular (resp. complete intersection, Gorenstein, Cohen-Macaulay), then so is $R_p$ for almost all $p$. 
\end{prop}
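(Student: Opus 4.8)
The plan is to reduce each of the four ring-theoretic conditions on $R$ to the existence of a short list of elements satisfying a property expressible through finitely many polynomial equations, and then to transfer that property from $R$ to the approximations $R_p$ by \L o\'{s}' theorem. The technical core is a package of compatibilities which I would establish first: for ideals $I=(a_1,\dots,a_m)$ and $J=(b_1,\dots,b_n)$ of $R$ with approximations $I_p$, $J_p$ and for $f=\ulim f_p\in R$, one has, for almost all $p$, that the maximal ideal of $R_p$ is the approximation $\m_p$ of $\m$, that $R_p/I_p$ is an approximation of $R/I$, that $(I:_R f)$ has approximation $(I_p:_{R_p}f_p)$, that $I\subseteq J$ holds if and only if $I_p\subseteq J_p$, and that $\dim R_p=\dim R$. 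Call this package $(\star)$.

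The membership, quotient and colon parts of $(\star)$ are proved by spelling out the witnessing relations and feeding them to \L o\'{s}' theorem, together with the faithful flatness of $R\hookrightarrow R_\infty=\ulim R_p$, which gives $IR_\infty\cap R=I$ and so lets equalities of ideals descend from $R_\infty$ to $R$; this is exactly Schoutens' bookkeeping for non-standard hulls. The dimension equality $\dim R_p=\dim R$ is the one genuinely geometric ingredient, and here I would follow \cite{affine}: one proves it by a Noether-normalisation argument carried out first at the level of the affine algebra $B$ and then transported to the localisation, using that a localised polynomial ring over $\F_p^{\text{alg}}$ has the expected dimension. This is the step I expect to be the real obstacle; everything else in $(\star)$ is routine \L o\'{s}-theorem manipulation.

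Granting $(\star)$, the four implications run as follows. If $R$ is regular of dimension $d$, write $\m=(x_1,\dots,x_d)$; then $\m_p=(x_{1p},\dots,x_{dp})$ is generated by $d=\dim R_p$ elements, so $R_p$ is regular for almost all $p$. If $R$ is Cohen--Macaulay, pick a system of parameters $x_1,\dots,x_d$ that is an $R$-regular sequence; being a regular sequence is the conjunction of the finitely many conditions $(x_1,\dots,x_{i-1}):_R x_i=(x_1,\dots,x_{i-1})$ and $(x_1,\dots,x_d)\ne R$, so by $(\star)$ the sequence $x_{1p},\dots,x_{dp}$ is $R_p$-regular for almost all $p$; hence $\operatorname{depth} R_p\ge d=\dim R_p$ and $R_p$ is Cohen--Macaulay. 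For the Gorenstein case $R$ is in particular Cohen--Macaulay, and with $\underline x=x_1,\dots,x_d$ a regular system of parameters $R$ is Gorenstein if and only if the socle of $R/\underline x$ is one-dimensional over $R/\m$, that is, $(\underline x:_R\m)=(\underline x,s)$ for some $s\notin(\underline x)$; by $(\star)$ this transfers to $(\underline x_p:_{R_p}\m_p)=(\underline x_p,s_p)$ with $s_p\notin(\underline x_p)$, and since $\underline x_p$ is a length-$d$ regular sequence in the $d$-dimensional Cohen--Macaulay ring $R_p$ it is a system of parameters, so $R_p/\underline x_p$ is Artinian local with one-dimensional socle and hence Gorenstein, whence $R_p$ is Gorenstein. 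Finally, if $R=(\C[x_1,\dots,x_n]/I)_\m$ is a complete intersection, then $I$ becomes generated by $c:=n-\dim R$ elements after extension to $\C[x_1,\dots,x_n]_\m$; choosing such generators $g_1,\dots,g_c$ inside $I$ and passing to approximations, $(\star)$ gives $(g_{1p},\dots,g_{cp})R_p=I_pR_p$ for almost all $p$, so $R_p$ is the quotient of the regular local ring $\F_p^{\text{alg}}[x_1,\dots,x_n]_{\m_p}$ of dimension $n$ by an ideal of height $c=n-\dim R_p$; such an ideal is generated by a regular sequence, so $R_p$ is a complete intersection.

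Besides the dimension clause of $(\star)$, the part that needs some care is the complete-intersection case, where one must pass to a generating set of the defining ideal that generates only after localisation and track this faithfully through the approximation; once $(\star)$ is in place, however, this too reduces to the compatibility of approximation with ideal containment.
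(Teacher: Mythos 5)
The paper gives no proof of this proposition: it is imported verbatim from Schoutens (\cite[Theorem 4.6]{affine}), and your argument is a correct reconstruction along essentially the same lines as that source, namely transferring finitely presented ideal-theoretic data (generators of $\m$, colon conditions for regular sequences, the socle condition, generators of the defining ideal) through \L o\'{s}' theorem and faithful flatness of $R\hookrightarrow R_\infty$, with the preservation of dimension under approximation as the one genuinely geometric input. Two cosmetic slips: since $R$ is only \emph{essentially} of finite type, the localization is at a prime that need not be maximal, so in the complete-intersection case the ambient regular local ring $\F_p^{\text{alg}}[x_1,\dots,x_n]_{\m_p}$ has dimension $\operatorname{ht}\m_p$ rather than $n$ and $c$ should be $\dim \C[x_1,\dots,x_n]_{\m}-\dim R$ (the argument is unchanged); and in the Gorenstein case ``regular system of parameters'' should read ``system of parameters forming a regular sequence''.
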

\begin{defn}[Models]
Let $K$ be a field and let $B$ be an algebra of finite type over a field $K$.
A pair $(Z, B_Z)$ is called a {\it model} of $B$ if $Z$ is a finitely generated $\mathbb{Z}$-subalgebra of $K$, and $B_Z$ is a finitely generated $Z$-algebra such that $B\cong B_Z\otimes_Z K$.
\end{defn}
\begin{defn}
Let $B$ be an algebra of finite type over $\C$ and $(Z,B_Z)$ be a model of $B$.
Suppose that $\mu$ is a maximal ideal of $Z$. Then $B_Z\otimes_Z \kappa(\mu)$ is said to be a {\it reduction to characteristic $p$} if $\Char \kappa(\mu)=p$.
\end{defn}
\begin{defn}
Let $B$ be an algebra of finite type over $\C$, $(Z,B_Z)$ be a model of $B$ and $\ba$ be an ideal of $B$.
Suppose that $\ba_Z$ is an ideal of $B_Z$ such that $\ba_Z B=\ba$. Then the ideal $\ba_Z(B_Z\otimes_Z\kappa(\mu))$ is said to be a {\it reduction} of the ideal $\ba$.
\end{defn}
An important relation between approximations and models is the following;
\begin{thm}[{\cite[Proposition 2.18]{log-terminal}}]\label{affine model}
Let $R$ be an algebra of finite type over $\mathbb{C}$. We can find a model $(Z, R_Z)$ of $R$ with the following property: for almost all $p$ there exists a maximal ideal $\p_p$ such that the collection of base changes $R_Z\otimes_Z \mathbb{F}_p^\text{alg}$ gives an approximation of $R$.
\end{thm}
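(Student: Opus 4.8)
The plan is to read the model off from a presentation of $R$ and then let \L o\'{s}'s theorem match reduction modulo $p$ with the ultraproduct realization of the coefficients. Write $R=\C[x_1,\dots,x_n]/I$ and fix generators $f_1,\dots,f_m$ of $I$. First I would let $Z$ be the $\Z$-subalgebra of $\C$ generated by the finitely many coefficients occurring in $f_1,\dots,f_m$; this is a finitely generated $\Z$-algebra, and with $R_Z:=Z[x_1,\dots,x_n]/(f_1,\dots,f_m)$ one has $R_Z\otimes_Z\C\cong R$, so $(Z,R_Z)$ is a model of $R$. Recall how an approximation of $R$ arises: by the Lefschetz principle $\C=\ulim\F_p^{\mathrm{alg}}$, so each coefficient $c$ of some $f_j$ may be written $c=\ulim c_p$ with $c_p\in\F_p^{\mathrm{alg}}$; letting $f_{j,p}$ denote the polynomial obtained from $f_j$ by replacing every coefficient $c$ with $c_p$, the rings $R_p:=\F_p^{\mathrm{alg}}[x_1,\dots,x_n]/(f_{1,p},\dots,f_{m,p})$ form an approximation of $R$ (this is the construction underlying $R_\infty=\ulim R_p$).

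The key step is to realize the operation $c\mapsto c_p$ as base change of $Z$ along a maximal ideal. I would choose $\Z$-algebra generators $g_1,\dots,g_k$ of $Z$ (for instance the coefficients of the $f_j$ themselves) together with a presentation $\Z[y_1,\dots,y_k]/(h_1,\dots,h_\ell)\cong Z$ sending $y_i\mapsto g_i$. Writing $g_i=\ulim g_{i,p}$ and applying \L o\'{s}'s theorem to each relation $h_s(g_1,\dots,g_k)=0$, which holds in $\C$, I obtain $h_s(g_{1,p},\dots,g_{k,p})=0$ in $\F_p^{\mathrm{alg}}$ for almost all $p$; since there are only finitely many $s$, for almost all $p$ the assignment $y_i\mapsto g_{i,p}$ descends to a ring homomorphism $\psi_p\colon Z\to\F_p^{\mathrm{alg}}$, which by construction carries each coefficient $c$ to $c_p$, hence each $f_j$ to $f_{j,p}$.

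It then remains to set $\p_p:=\Ker\psi_p$ and check that $\p_p$ is a maximal ideal of residue characteristic $p$ and that $R_Z\otimes_Z\F_p^{\mathrm{alg}}\cong R_p$. For the former, $Z/\p_p$ embeds into $\F_p^{\mathrm{alg}}$, hence is a domain integral over $\F_p$; being in addition a finitely generated $\F_p$-algebra, it must be a finite field, so $\p_p$ is maximal with $\Char\kappa(\p_p)=p$. For the latter, base changing the presentation of $R_Z$ along $Z\to Z/\p_p\hookrightarrow\F_p^{\mathrm{alg}}$ gives $\F_p^{\mathrm{alg}}[x_1,\dots,x_n]/(\psi_p(f_1),\dots,\psi_p(f_m))$, which equals $R_p$ by the previous paragraph; thus the family $R_Z\otimes_Z\F_p^{\mathrm{alg}}$ is an approximation of $R$. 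I expect the only real obstacle to be the second paragraph: one must build the family $(\psi_p)_p$ compatibly with the earlier choices $c=\ulim c_p$, and it is exactly this compatibility that \L o\'{s}'s theorem provides. Upgrading ``$\Ker\psi_p$ prime'' to ``$\Ker\psi_p$ maximal'' is the short integrality observation above, and the harmless ambiguity in the word ``approximation'' is the same ambiguity already present in choosing approximations of complex numbers.
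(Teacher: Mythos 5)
Your proof is correct; note that the paper itself offers no argument for this statement, which is imported verbatim from Schoutens \cite[Proposition 2.18]{log-terminal}. Your route — take $Z$ generated by the coefficients of a presentation of $R$, use \L o\'{s}'s theorem on a finite presentation of $Z$ over $\Z$ to produce homomorphisms $\psi_p\colon Z\to\F_p^{\text{alg}}$ compatible with the chosen approximations of the coefficients, observe by integrality (Zariski's lemma) that $\Ker\psi_p$ is maximal of residue characteristic $p$, and base change the presentation to identify $R_Z\otimes_Z\F_p^{\text{alg}}$ with the approximation $R_p$ — is essentially the standard argument underlying the cited result and the construction of approximations in \cite{affine}, so there is nothing to add.
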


\begin{prop}
Let $R$ be a local domain essentially of finite type over $\mathbb{C}$. The  following conditions are equivalent to each other.
\begin{enumerate}
\item [$(\textup{i})$] $R$ is normal.
\item [$(\textup{ii})$] $R_p$ is normal for almost all $p$.
\item [$(\textup{iii})$] $R_\infty$ is normal.
\end{enumerate}
\end{prop}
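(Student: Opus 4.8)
The plan is to establish the cycle $(\textup{i})\Rightarrow(\textup{ii})\Rightarrow(\textup{iii})\Rightarrow(\textup{i})$, using throughout that a domain is normal precisely when it is integrally closed in its fraction field. For $(\textup{i})\Rightarrow(\textup{ii})$ I would argue by spreading out. Write $R=B_\p$ with $B$ of finite type over $\C$; since the normal locus of $\Spec B$ is open and contains $\p$, after replacing $B$ by a suitable localization $B[1/f]$ with $f\notin\p$ we may assume $B$ is a normal domain. Choose a model $(Z,B_Z)$ of $B$ as in Theorem \ref{affine model}, so that for almost all $p$ some maximal ideal $\p_p$ of $Z$ makes $B_Z\otimes_Z\F_p^{\text{alg}}$ an approximation of $B$; by generic flatness we may also assume $B_Z$ is flat over $Z$. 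Since $B$ is normal and $\C$ contains an algebraic closure of $\operatorname{Frac}(Z)$, faithfully flat descent of normality shows the generic fibre of $\Spec B_Z\to\Spec Z$ is geometrically normal; as the locus of points of $\Spec Z$ whose fibre is geometrically normal is constructible (indeed open once $B_Z$ is taken $Z$-flat) and contains the generic point, after a further shrinking of $Z$ we may assume every geometric fibre is normal. For a maximal ideal $\mu$ of $Z$ the residue field $\kappa(\mu)$ is finite, so $\overline{\kappa(\mu)}=\F_p^{\text{alg}}$ with $p=\Char\kappa(\mu)$, whence $B_Z\otimes_Z\F_p^{\text{alg}}$ is normal; applying this to $\mu=\p_p$ shows that $R_p$, a localization of $B_Z\otimes_Z\F_p^{\text{alg}}$, is normal for almost all $p$.

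For $(\textup{ii})\Rightarrow(\textup{iii})$: since $R$ is a domain, almost all $R_p$ are domains (again by spreading out, now of geometric integrality), so $R_\infty=\ulim R_p$ is a domain. Let $z\in\operatorname{Frac}(R_\infty)$ be integral over $R_\infty$, and write $z=a/b$ with $a=\ulim a_p$, $b=\ulim b_p$, $b\neq 0$, together with a monic relation $z^n+c_1z^{n-1}+\cdots+c_n=0$, $c_i=\ulim c_{i,p}$. Clearing denominators gives the polynomial identity $a^n+c_1a^{n-1}b+\cdots+c_nb^n=0$ in $R_\infty$, so by \L o\'s theorem $a_p^n+c_{1,p}a_p^{n-1}b_p+\cdots+c_{n,p}b_p^n=0$ in $R_p$ for almost all $p$. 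For such $p$ the element $a_p/b_p$ is integral over the normal domain $R_p$, hence $a_p=b_pd_p$ for some $d_p\in R_p$; setting $d_p=0$ otherwise, $d:=\ulim d_p$ lies in $R_\infty$ and satisfies $bd=a$, so $z=d\in R_\infty$ and $R_\infty$ is integrally closed in $\operatorname{Frac}(R_\infty)$.

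For $(\textup{iii})\Rightarrow(\textup{i})$ I would use that $R\hookrightarrow R_\infty$ is faithfully flat, hence injective and satisfying $bR_\infty\cap R=bR$ for all $b\in R$. If $z\in\operatorname{Frac}(R)$ is integral over $R$, then, viewing $\operatorname{Frac}(R)\subseteq\operatorname{Frac}(R_\infty)$, $z$ is integral over the normal domain $R_\infty$, so $z\in R_\infty$; writing $z=a/b$ with $a,b\in R$ we get $a\in bR_\infty\cap R=bR$, hence $z\in R$ and $R$ is normal.

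The step I expect to be the main obstacle is the spreading-out in $(\textup{i})\Rightarrow(\textup{ii})$: one must exhibit a single model $(Z,B_Z)$ that simultaneously has geometrically normal geometric fibres and realizes the approximations of $R$ through the maximal ideals $\p_p$, and one must check that $\Spec B_Z\to\Spec Z$ still meets almost all characteristics after the successive shrinkings of $Z$. The other two implications are essentially formal, resting only on \L o\'s theorem and the faithful flatness of $R\hookrightarrow R_\infty$.
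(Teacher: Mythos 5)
Your proposal is correct and follows essentially the same route as the paper: the implication $(\textup{i})\Rightarrow(\textup{ii})$ by spreading out over a model and the constructibility of the geometrically normal locus (\cite[Corollaire (9.9.5)]{EGAIV}), $(\textup{ii})\Rightarrow(\textup{iii})$ by \L o\'s' theorem, and $(\textup{iii})\Rightarrow(\textup{i})$ by faithfully flat descent along $R\hookrightarrow R_\infty$. The only difference is that you spell out the latter two implications (transferring an explicit integral dependence relation and using $bR_\infty\cap R=bR$), which the paper leaves as one-line citations.
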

\begin{proof}

	The condition (iii) follows from (ii) by \L o\'{s}' theorem. 
	The condition (i) follows from (iii) since $R \hookrightarrow R_\infty$ is faithfully flat.
	It is enough to show that the condition (ii) follows from (i). Suppose that $S$ is a normal domain of finite type over $\mathbb{C}$ such that there exists a prime ideal $\mathfrak{p}$ of $S$ such that $R\cong S_\mathfrak{p}$.
	Then for almost all $p$ , $R_p \cong (S_p)_{\mathfrak{p}_p}$. Let $(Z, S_Z)$ be a model of $S$ as in Theorem \ref{affine model}. A set 
	\begin{equation*}
			\mathcal{N}:=\{\mu\in \Spec Z| S_Z\otimes_Z \kappa (\mu) \text{ is geometrically normal over }\kappa(\mu)\}
		\end{equation*}
	is constructible by \cite[Corollaire (9.9.5)]{EGAIV} and contains the generic point of $\Spec Z$. Hence $\mathcal{N}$ contains a dense open subset of $\Spec Z$. Enlarging the model $S_Z$, we may assume that $S_Z\otimes_Z\kappa(\mu)$ is geometrically normal over $\kappa(\mu)$ for all $\mu\in \Spec(Z)$. Therefore, $S_p$ is normal for almost all $p$. Hence, $R_p$ is normal for almost all $p$.
\end{proof}
\begin{defn}
Let $R$ be a Noetherian ring, $I$ be an ideal of $R$ and $n$ be a natural number. Then the {\it $n$-th symbolic power} of $I$ is the ideal
\begin{equation}
I^{(n)}=R\cap\bigcap_{\p\in\Ass (R/I)}I^nR_\p,
\end{equation}
where $\p$ runs through all the associated primes of $R/I$.
\end{defn}
\begin{prop}
Let $R$ be a local ring essentially of finite type over $\mathbb{C}$ and let $I$ be an ideal of $R$. Fix a natural number $n$, taking symbolic power commutes with taking approximations that is $(I_p)^{(n)}=(I^{(n)})_p$ holds for almost all $p$.
\end{prop}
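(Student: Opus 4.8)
The plan is to reduce the statement about symbolic powers to a statement about associated primes and localizations, and then handle each piece via \L o\'{s}' theorem together with the constructibility arguments already used in the proof of normality. First I would write $R \cong S_\p$ with $S$ of finite type over $\C$, choose a model $(Z, S_Z)$ as in Theorem \ref{affine model} so that $S_Z \otimes_Z \kappa(\mu)$ gives an approximation of $S$ for almost all closed points $\mu$, and spread out $I$ to an ideal $I_Z \subseteq S_Z$ with $I_Z S = I$. The key reduction is the identity
\begin{equation}
I^{(n)} = \bigcap_{\p \in \Ass(R/I)} \left( I^n R_\p \cap R \right),
\end{equation}
so that $I^{(n)}$ is determined by the finite set of associated primes of $R/I$ and, for each, an ideal contraction from a localization. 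Both ingredients — the set $\Ass(R/I)$ and the formation of $I^n R_\p \cap R$ — must be shown to commute with approximation.

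Next I would establish that $\Ass$ commutes with approximation. After enlarging the model, the minimal primes of $S_Z/I_Z$ and their specializations behave generically well: by generic flatness and the constructibility of the locus where the fibers of $\Spec(S_Z/I_Z) \to \Spec Z$ have a prescribed number of irreducible/associated components (using that $S_Z/I_Z$ can be arranged to have reduced or at least well-behaved fibers after shrinking $Z$, cf. \cite[\S9]{EGAIV}), one gets that $\Ass(S_p/I_p)$ is exactly the set of reductions $\q_p$ of the primes $\q \in \Ass(S/I)$ for almost all $p$. Localizing at $\p$ and using that $\Ass$ commutes with localization, this gives $\Ass(R_p/I_p) = \{\q_p R_p : \q \in \Ass(R/I)\}$ for almost all $p$. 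Then for each fixed associated prime $\q$, the contraction $I^n R_\q \cap R$ commutes with approximation: this is a finite colimit/kernel computation — $I^n R_\q \cap R$ is the kernel of $R/I^n \to (R/I^n)_\q$, a localization of finitely presented modules — and kernels, cokernels, and localizations of finitely presented modules over $R$ all commute with the faithfully flat base change $R \hookrightarrow R_\infty$ and hence, by \L o\'{s}' theorem, with approximation. Intersecting the finitely many ideals $I^n R_{\q} \cap R$ (a finite intersection, again compatible with flat base change and \L o\'{s}) yields $(I^{(n)})_p = (I_p)^{(n)}$ for almost all $p$.

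The main obstacle I expect is the second step: showing $\Ass(S_p/I_p)$ consists precisely of the reductions of the members of $\Ass(S/I)$, with no extra embedded or spurious primes appearing in characteristic $p$. Generic niceness gives that the generic fiber's associated primes spread out, but one must rule out new associated primes on a dense open subset of $\Spec Z$ — this requires a constructibility statement for the locus $\{\mu : \#\Ass(S_Z \otimes \kappa(\mu)) = \#\Ass(S \otimes \bar{\kappa(\mu)})\}$ or, more robustly, arranging (after shrinking $Z$) that $S_Z/I_Z$ is flat over $Z$ with fibers whose primary decomposition is the reduction of a fixed one, which can be done by spreading out a primary decomposition of $I$ itself over $\C$ and invoking flatness of each primary component. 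Once the associated primes are under control, everything else is a routine application of faithful flatness of $R \hookrightarrow R_\infty$ plus \L o\'{s}' theorem as in the preceding proposition.
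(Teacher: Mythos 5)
Your reduction of $I^{(n)}$ to $\Ass(R/I)$ plus the contractions $I^nR_\q\cap R$ is a legitimate but genuinely different route from the paper's, and as written it has two gaps. The first you identify yourself: the claim that $\Ass(R_p/I_p)$ is exactly the set of reductions of $\Ass(R/I)$ for almost all $p$ is never proved, and it is the heart of the matter; your proposed repair (spread out a primary decomposition and argue that each primary component reduces to a primary ideal) is the right kind of statement, but it is of essentially the same depth as the proposition itself, it must be applied to $I^n$ rather than $I$, and it cannot lean on reducedness of fibres since $S/I^n$ is typically non-reduced. The second gap is unacknowledged and the justification offered is incorrect: $I^nR_\q\cap R$ is the kernel of $R\to(R/I^n)_\q$, but $(R/I^n)_\q$ is not a finitely presented $R$-module, and localization at $\q$ does not commute with approximation in the naive sense. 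Indeed $\ulim\,(R_p/I_p^n)_{\q_p}\cong(R_\infty/I^nR_\infty)_{\q R_\infty}$ is a further localization of $(R/I^n)_\q\otimes_RR_\infty$, so flatness of $R\to R_\infty$ only yields the inclusion $(I^nR_\q\cap R)_p\subseteq I_p^n(R_p)_{\q_p}\cap R_p$; the reverse inclusion requires ruling out elements of $R_p/I_p^n$ killed by some $s\in R_p\setminus\q_p$ coming from no element of $R\setminus\q$. Concretely, one would write $I^nR_\q\cap R=(I^n:s_0)$ for a suitable $s_0\notin\q$ and check that $s_{0p}$ still computes the saturation in $R_p$, which again needs control of $\Ass(R_p/I_p^n)$ --- so both halves of your argument rest on the same unproven spreading-out fact, and the appeal to ``kernels of finitely presented modules commute with flat base change'' does not cover this step.

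For comparison, the paper argues more economically: it reduces to the case $I=\p$ prime and proves the two inclusions separately. Since $\p^{(n)}$ is $\p$-primary, its approximation $(\p^{(n)})_p$ is $\p_p$-primary (this is the one non-formal input, of the same nature as your $\Ass$ claim but invoked only once, for a single prime) and contains $(\p_p)^n$, hence contains the $\p_p$-primary component $(\p_p)^{(n)}$ of $(\p_p)^n$; conversely, faithful flatness of $R_\p\to(R_\p)_\infty$ gives $\p^{(n)}=\p^n(R_\p)_\infty\cap R\subseteq\ulim\,(\p_p)^{(n)}$, whence $(\p^{(n)})_p\subseteq(\p_p)^{(n)}$ for almost all $p$. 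If you wish to keep your general-$I$ formulation, the missing ingredient to isolate and actually prove is the lemma that approximations of $\q$-primary ideals are $\q_p$-primary (equivalently, your statement that $\Ass$ commutes with approximation); once that is in place, your contraction step should be rewritten via colon ideals rather than via base change of the non-finitely-presented module $(R/I^n)_\q$.
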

\begin{proof}
It is enough to show the case $I=\p$ is a prime ideal.
Since $\mathfrak{p}^n\subseteq \mathfrak{p}^{(n)} $and $\mathfrak{p}^{(n)}$ is $\mathfrak{p}$-primary, $(\mathfrak{p}_p)^{n}\subseteq(\mathfrak{p}^{(n)})_p$ and $(\mathfrak{p}^{(n)})_p$ is $\mathfrak{p}_p$-primary. Therefore $(\mathfrak{p}_p)^{(n)}\subseteq(\mathfrak{p}^{(n)})_p$ holds.
On the other hand, $\mathfrak{p}^n (R_\mathfrak{p})_\infty \cap R =\mathfrak{p}^{(n)}$ and $(\mathfrak{p}_p)^{n}(R_\mathfrak{p})_p\cap R_p=(\mathfrak{p}_p)^{(n)}$. Therefore, 
\begin{equation}
\mathfrak{p}_\infty ^{(n)} \cap R = \mathfrak{p}^{(n)}
\end{equation} and 
\begin{equation}
((\mathfrak{p}_\infty)^{(n)})_p=(\mathfrak{p}_p)^{(n)}.
\end{equation}
Thus $\mathfrak{p}^{(n)} R_\infty \subseteq (\mathfrak{p}_\infty)^{(n)}$ and $(\mathfrak{p}^{(n)})_p\subseteq ((\mathfrak{p}_\infty)^{(n)})_p=(\mathfrak{p}_p)^{(n)}$.
\end{proof}
\begin{cor}
Suppose that $R$ is a $\Q$-Gorenstein local domain essentially of finite type over $\C$. Then an approximation $R_p$ of $R$ is $\Q$-Gorenstein for almost all $p$.
\end{cor}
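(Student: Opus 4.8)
The plan is to reduce the assertion to the statement that the formation of the canonical module commutes with taking approximations, and then to use that a normal domain is $\Q$-Gorenstein exactly when some reflexive power of its canonical ideal is principal --- a condition which transfers from $R$ to almost all $R_p$ through the already established compatibility of symbolic powers with approximations.

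First I would record that, since $R$ is normal, the proposition above gives that $R_p$ is normal for almost all $p$, and a Noetherian normal local ring is a domain, so $R_p$ is a normal domain for almost all $p$. The main step is then to show $\omega_{R_p}\cong(\omega_R)_p$ for almost all $p$. I would write $R=Q/I$ with $(Q,\mathfrak n)$ a regular local ring essentially of finite type over $\C$ (a localization of a polynomial ring) and $I\subseteq Q$ an ideal, chosen so that the approximations satisfy $R_p=Q_p/I_p$ with $Q_p$ regular local for almost all $p$ (using that regularity descends to approximations). Fix a finite free resolution $F_\bullet\to R$ over $Q$. Since the non-standard hull of a finitely generated $Q$-module $M$ is $M\otimes_Q Q_\infty$, since $Q\hookrightarrow Q_\infty$ is faithfully flat, and since $\ulim$ is exact, one gets $\ulim H_i\big((F_\bullet)_p\big)\cong H_i(F_\bullet)\otimes_Q Q_\infty$, so that $(F_\bullet)_p\to R_p$ is again a finite free resolution over $Q_p$ for almost all $p$. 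Dualizing into $Q$ (resp.\ $Q_p$) and taking cohomology --- which again commutes with $\ulim$ --- yields $\operatorname{Ext}^i_{Q_p}(R_p,Q_p)\cong\big(\operatorname{Ext}^i_Q(R,Q)\big)_p$ for all $i$ and almost all $p$. By faithful flatness the vanishing of these modules is detected over $R$, so the least degree in which $\operatorname{Ext}^\bullet_{Q_p}(R_p,Q_p)$ is nonzero agrees with that of $\operatorname{Ext}^\bullet_Q(R,Q)$; as $Q$ and $Q_p$ are Cohen--Macaulay this least degree is $\operatorname{grade}(I,Q)=\operatorname{ht} I$, resp.\ $\operatorname{ht} I_p$, and as $R$ and $R_p$ are equidimensional (being domains) the $\operatorname{Ext}$ in that degree is the canonical module. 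Hence $\omega_{R_p}\cong(\omega_R)_p$ for almost all $p$.

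To conclude, I would realize $\omega_R$ as a divisorial ideal $\mathfrak c\subseteq R$ and write the $\Q$-Gorenstein hypothesis as $\mathfrak c^{(r)}=fR$ for some integer $r\ge1$ and some nonzero $f\in R$ (the $r$-th symbolic power of a divisorial ideal in a normal domain coinciding with its $r$-th reflexive power). By the proposition that symbolic powers commute with approximations, $(\mathfrak c_p)^{(r)}=\big(\mathfrak c^{(r)}\big)_p=(fR)_p=f_pR_p$ for almost all $p$, and $f_p\ne0$ for almost all $p$ since $f\ne0$ in $R\hookrightarrow R_\infty$. Because $\mathfrak c_p\cong\omega_{R_p}$ is then a divisorial ideal of the normal domain $R_p$ representing its canonical class, the equality $(\mathfrak c_p)^{(r)}=f_pR_p$ says precisely that $\omega_{R_p}^{(r)}\cong R_p$, i.e.\ that $rK_{R_p}$ is Cartier; thus $R_p$ is $\Q$-Gorenstein for almost all $p$. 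I expect the middle step to be the main obstacle: one must understand how approximations interact with finite free resolutions and with $\operatorname{Ext}$, and identify the correct cohomological degree by a grade computation, which has the side benefit of avoiding a separate argument that Krull dimension is preserved under approximation.
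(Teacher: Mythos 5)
Your proof is correct and follows the route the paper intends: the corollary is stated without proof right after the proposition that symbolic powers commute with approximations, the point being exactly that $\Q$-Gorensteinness amounts to principality of the symbolic power $\omega_R^{(r)}$ of a canonical ideal, which transfers to $R_p$ by that proposition. The extra content you supply --- the verification that $\omega_{R_p}\cong(\omega_R)_p$ via $\operatorname{Ext}$ over a regular presentation, using exactness of ultraproducts, faithful flatness of $Q\hookrightarrow Q_\infty$, and the grade computation to pin down the correct cohomological degree --- is precisely the step the paper leaves implicit (it is standard in Schoutens' framework), and your treatment of it is sound.
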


\section {Multiplier ideals and test ideals}
 In this section, we quickly review test ideals of Hara and Yoshida, and multiplier ideals.
\begin{defn}[\cite{HY}]
Let $R$ be a Noetherian domain of characteristic $p>0$, $\ba$ be a non-zero ideal and $t$ be a positive real number. For an ideal $I$ of $R$, we define the $\ba^t${\it -tight closure} $I^{*\ba^t}$ of $I$ as follows: $z\in I^{*\ba^t}$ if and only if there exist $c\in R\setminus \{0\}$ such that $cz^q\ba^{\lceil tq\rceil}\subseteq I^{[q]}$ for all $q=p^e\gg 0$.
\end{defn}
\begin{defn}[\cite{HY}]
Let $R$ be a Noetherian domain of characteristic $p>0$, $\ba$ be a non-zero ideal and $t$ be a positive real number. Suppose that $M$ is an $R$-module and $F:R\to R$ is the Frobenius morphism. Then $z$ lies in the $\ba^t$-{\it tight closure} $0^{*\ba^t}_M$ of 0 in $M$ if there exists an element $c \in R\setminus \{0\}$ such that $ca\otimes z =0 \in F^e_*R\otimes M$ for all $a\in\ba^{\lceil tq \rceil}$, for all $q=p^e\gg 0$.
\end{defn}
\begin{defn}[\cite{HY}]
Let $(R,\m)$ be an excellent local domain of characteristic $p>0$, $E=E_R(R/\m)$ be the injective hull of the residue field $R/\m$, $\ba$ be a non-zero ideal and $t$ be a positive real number. Then we define the {\it test ideal} $\tau(R,\ba^t)$ of $(R,\ba^t)$ by
\begin{equation}
\tau(R,\ba^t)=\bigcap_{M\subseteq E} \Ann_R(0_M^{*\ba^t}),
\end{equation}
where $M$ runs through all finitely generated $R$-submodules of $E$.
\end{defn}
\begin{defn}[{\cite[Definition 9.3.60]{La}}]
Let $X$ be a normal $\Q$-Gorenstein variety over $\C$, $\ba$ be a non-zero ideal sheaf of $\sO_X$ and $t$ be a positive real number.
Take a log resolution $\mu:Y\to X$ of $\ba$ with $\ba \sO_Y=\sO_Y(-Z)$. Then the {\it multiplier ideal} $\mathcal{J}(X,\ba^t)$ of a pair $(X,\ba^t)$ is defined by
\begin{equation}
\mathcal{J}(X,\ba^t)=\mu_*\sO_Y(\lceil K_Y-\mu^*K_X-tZ\rceil).
\end{equation}
If $X=\Spec R$, we also denote $\mathcal{J}(\Spec R,\ba^t)$ by $\mathcal{J}(R,\ba^t)$.
\end{defn}
\begin{defn}
Let $X$ be a normal $\Q$-Gorenstein variety over $\C$, $\ba$ be a non-zero ideal sheaf of $\sO_X$ and $t$ be a positive real number.
$(X,\ba^t)$ is said to be {\it Kawamata log terminal} if the multiplier ideal $\mathcal{J}(X,\ba^t)$ is trivial.
If $X=\Spec R$, then $R$ is said to be {\it Kawamata log terminal}. 
\end{defn}
\begin{thm}\label{approx of mult}
 Let $R$ be a $\Q$-Gorenstein local domain essentially of finite type over $\mathbb{C}$ and let $\mathfrak{a}$ be a non-zero ideal of $R$ and $t$ be a positive real number.
 Then \begin{equation}
 \mathcal{J}(R, \mathfrak{a}^t)_p=\tau (R_p, \mathfrak{a}_p^t)
\end{equation} for almost all $p$.
\end{thm}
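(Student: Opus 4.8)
The plan is to match the approximation $R_p$ with a reduction modulo $p$ in the sense of Hara--Yoshida, and then to invoke \cite{HY}. First I would write $R=B_{\p}$ with $B$ of finite type over $\C$ and $\ba=\mathfrak{b}B_{\p}$ for an ideal $\mathfrak{b}\subseteq B$; note that being $\Q$-Gorenstein forces $R$, hence $B$ after shrinking $\Spec B$, to be normal, so that the multiplier ideal is defined. I would then fix, once and for all, a log resolution $\mu\colon Y\to\Spec B$ of $\mathfrak{b}$ with $\mathfrak{b}\sO_Y=\sO_Y(-D)$, a canonical divisor $K_B$ on $\Spec B$, and an integer $r>0$ with $rK_B$ Cartier. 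Using Theorem \ref{affine model}, I would choose a model $(Z,B_Z)$, together with maximal ideals, so that the base changes $B_Z\otimes_Z\F_p^{\text{alg}}$ form an approximation $B_p$ of $B$, compatibly with a prime $\p_p$ for which $(B_p)_{\p_p}$ approximates $R$. Then, replacing $\Spec Z$ by a dense open subset (equivalently, enlarging the model), I would arrange that $\mathfrak{b}$, $D$, $\mu$, $K_B$ and the Cartier divisor $rK_B$ all spread out over $Z$; that the spread-out morphism $\mu_Z\colon Y_Z\to\Spec B_Z$ is, fibrewise over $\Spec Z$, a log resolution of the corresponding ideal with the prescribed simple normal crossing data; that $B_Z\otimes_Z\kappa(\mu)$ is normal for every $\mu\in\Spec Z$ (possible by the proposition above on normality of approximations, whose proof in fact gives geometric normality of $B_Z\otimes_Z\kappa(\mu)$ for every $\mu$ after enlarging the model); and, crucially, that the coherent sheaf $\sO_{Y_Z}(\lceil K_{Y_Z}-\mu_Z^{\ast}K_{B_Z}-tD_Z\rceil)$ is flat over $Z$ and its pushforward along $\mu_Z$ commutes with every base change $Z\to\kappa(\mu)$. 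The latter is possible by generic flatness and cohomology-and-base-change, the point being that the base-change map in question is an isomorphism at the generic point of $\Spec Z$, where it recovers the characteristic-zero multiplier ideal after the further flat base change from the fraction field of $Z$ to $\C$.

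Next, for almost all $p$ I would base change $\mu_Z$ along $Z\to\F_p^{\text{alg}}$ to a morphism $\mu_p\colon Y_p\to\Spec B_p$. By the choices above, $\mu_p$ is a log resolution of $\mathfrak{b}_p$ in characteristic $p$, the divisor $rK_{B_p}$ is Cartier with $(rK_{B_Z})_p=rK_{B_p}$, and $\mu_p^{\ast}K_{B_p}$ is the reduction of $\mu_Z^{\ast}K_{B_Z}$; since $\sO(\lceil\,\cdot\,\rceil)$ and the pushforward commute with this base change, I obtain
\begin{equation*}
\mathcal{J}(\Spec B,\mathfrak{b}^t)_p=\mathcal{J}(\Spec B_p,\mathfrak{b}_p^t),
\end{equation*}
where the right-hand side is the multiplier ideal computed from the log resolution $\mu_p$. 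Localizing at $\p_p$ then yields $\mathcal{J}(R,\ba^t)_p=\mathcal{J}(R_p,\ba_p^t)$ for almost all $p$.

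Finally I would quote Hara--Yoshida \cite{HY}: for a normal $\Q$-Gorenstein local ring essentially of finite type over $\C$, a reduction to characteristic $p$ of the multiplier ideal coincides with the test ideal $\tau$ of the corresponding reduction of the pair for all $p\gg 0$, and the finite set of excluded primes does not depend on the chosen model. The only extra point is that the approximations $R_p$ live over $\F_p^{\text{alg}}$ rather than over a finite field; but $R_p$ is then still $F$-finite, and both the characteristic-$p$ multiplier ideal and the test ideal $\tau(-,\ba_p^t)$ are compatible with the faithfully flat separable extension $\F_p\hookrightarrow\F_p^{\text{alg}}$ (for $\tau$, one checks that a test element and the relevant $\ba^t$-tight closures are preserved), so $\mathcal{J}(R_p,\ba_p^t)=\tau(R_p,\ba_p^t)$ holds for our $R_p$ as well, for all large $p$. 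Since every cofinite set of primes belongs to a non-principal ultrafilter, chaining the last two equalities gives $\mathcal{J}(R,\ba^t)_p=\tau(R_p,\ba_p^t)$ for almost all $p$, as desired.

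The step I expect to be the main obstacle is the first half: arranging the log resolution, the $\Q$-Gorenstein data, and above all the base-change compatibility of the pushforward sheaf defining the multiplier ideal to spread out over a single model $Z$, so that ``reduction in the approximation sense'' genuinely computes the characteristic-$p$ multiplier ideal. Once that book-keeping is in place, the statement is essentially a repackaging of the Hara--Yoshida theorem, with only the passage from finite residue fields to $\F_p^{\text{alg}}$ left to handle.
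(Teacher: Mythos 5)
Your overall route is the same as the paper's: identify Schoutens approximations with Hara--Yoshida style reductions modulo $p$ through a model as in Theorem \ref{affine model}, invoke the Hara--Yoshida comparison theorem for reductions to $p\gg 0$, and then pass from the finite residue field $\kappa(\mu)$ to $\F_p^{\text{alg}}$ (the paper handles the test-ideal side of this last base change by an argument as in Srinivas--Takagi, Lemma 1.5, and it reduces from $R$ to the finite-type algebra by the compatibility of both $\mathcal{J}$ and $\tau$ with localization). Your extra first half --- spreading out the log resolution and proving that the pushforward defining $\mathcal{J}$ commutes with base change to the fibres --- is legitimate but largely redundant given the form in which you then quote Hara--Yoshida (``reduction of $\mathcal{J}$ equals $\tau$ of the reduction''), since that spreading-out is exactly what their proof already packages; the paper simply cites their Theorem 6.8 and skips this step.

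The one step whose justification is wrong as written is the very last one: you speak of ``the finite set of excluded primes'' and conclude via ``every cofinite set of primes belongs to a non-principal ultrafilter.'' Hara--Yoshida's equality holds for closed points $\mu$ in a dense open subset $U\subsetneq\Spec Z$, and since $Z$ is in general a finitely generated $\Z$-algebra strictly larger than $\Z$, the set of rational primes lying under excluded closed points need not be finite (typically every large $p$ has both good and bad closed points above it). What actually closes the argument is the same mechanism you already use earlier in your proof: choose $0\neq f\in Z$ with $D(f)\subseteq U$ (i.e.\ enlarge the model); since $f\neq 0$ in $Z\subseteq\C$, \L o\'{s}' theorem gives $f\notin\mu_p$ for almost all $p$ in the ultrafilter sense, so the specific maximal ideals $\mu_p$ furnished by Theorem \ref{affine model} land in $U$ for almost all $p$, which is exactly the ``almost all'' needed in the statement. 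With that substitution your argument is complete and agrees in substance with the paper's proof.
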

\begin{proof}
Suppose that $S$ is a $\Q$-Gorenstein finitely generated $\mathbb{C}$-domain and $\mathfrak{p}$ is a prime ideal of $S$ such that $R\cong S_\mathfrak{p}$.
Let $\mathfrak{b}$ be $\mathfrak{a}\cap S$ and let $(Z, S_Z)$ be a model of S, and $\mu$ be a maximal ideal of $Z$ and $S_\mu$ be a reduction to characteristic $p>0$, $S_Z\otimes _Z \kappa (\mu)$.
Let $\mathfrak{b}_\mu$ be the reduction of $\mathfrak{b}$.
Then $R_p$ is a localization of $S_\mu\otimes \mathbb{F}_p^{\text{alg}}$ for some $\mu$.
Since multiplier ideals commute with localizations and test ideals also commute with localizations,
\begin{equation}
\mathcal{J}(R,\mathfrak{a}^t)=\mathcal{J}(S, \mathfrak{b}^t)R, \tau (R_p, \mathfrak{a}_p^t)=\tau (S_\mu\otimes \mathbb{F}_p^{\text{alg}}, (\mathfrak{b}_\mu\otimes\mathbb{F}_p^{\text{alg}})^t)R_p
\end{equation}
By arguments similar to \cite[Lemma 1.5]{Srni-Tak}. 
\begin{equation}
\tau (S_\mu\otimes \mathbb{F}_p^{\text{alg}}, (\mathfrak{b}_\mu\otimes\mathbb{F}_p^{\text{alg}})^t)=\tau (S_\mu, \mathfrak{b}_\mu^t)\otimes\mathbb{F}_p^{\text{alg}}
\end{equation}
By \cite[Theorem 6.8]{HY}, reduction to characteristic $p \gg0$, $\mathcal{J}(S,\mathfrak{b}^t)_\mu=\tau (S_\mu, \mathfrak{b}_\mu^t)$.
In conclusion, $\tau (R_p, \mathfrak{a}_p^t)$ gives an approximation of $\mathcal{J}(R,\mathfrak{a}^t)$.
 \end{proof}
 
\section{ultra-test ideal}
In this section, we will consider a pair $(R, \mathfrak{a}^t)$ where $R$ is a normal $\mathbb{Q}$-Gorenstein local domain essentially of finite type over $\mathbb{C}$, $\mathfrak{a}$ is a non-zero ideal of $R$ and  $t\in \R_{>0}$.
\begin{notation}
\begin{enumerate}
\item[(\textup{i})]We denote $\ulim \N$ the ultraproduct of countably many copies of $\N$ by $\NN$. This is called the set of  non-standard natural number in the non-standard analysis.
\item[(\textup{ii})] For  $\epsilon=\ulim e_p \in \NN$ and $t\in \R_{>0}$, we denote $\ulim \lceil te_p\rceil$ by $\lceil t \epsilon\rceil$. 
\item[(\textup{iii})] For $\epsilon=\ulim e_p \in \NN$, we denote $\ulim p^{e_p}$ by $\pi ^ \epsilon$.
\item[(\textup{iv})] For $\epsilon=\ulim e_p \in \NN$ and an ideal $\mathfrak{a}$ of $R$, we define a non-standard power of the ideal $\ba^\epsilon = \ulim \ba_p^{e_p}\subseteq R_\infty$.
\end{enumerate}
\end{notation}
\begin{defn}
$(R,\ba^t)$ is said to be a {\it pair} if $R$ is a local domain essentially of finite type over $\mathbb{C}$, $\mathfrak{a}$ is a non-zero ideal of $R$ and $t$ is a positive real number.
A pair $(R,\ba^t)$ is said to be a {\it $\Q$-Gorenstein pair} if $R$ is normal $\Q$-Gorenstein.
A pair $(R,\ba^t)$ is said to be a {\it quasi-Gorenstein pair} if $R$ is normal quasi-Gorenstein.
\end{defn}
\begin{defn}
For $\epsilon=\ulim e_p \in\NN$, we define an {\it ultra-Frobenius} $F^\epsilon :R\to R_\infty$ as the map sending an element $x$ of $R$ to an element $\ulim x_p^{p^{e_p}}$. We denote $R_\infty$ viewed as an $R$-module via $F^\epsilon$ by $F^\epsilon_*R_\infty $.
\end{defn}
\begin{defn}[ultra-tight closure of zero]
Let $(R,\ba^t)$ be a pair.
We define the {\it ultra-tight closure} $\cl{M}{\ba}$ {\it of zero} by
\begin{equation}
\cl{M}{\ba}:=\bigcup_{c\in R^{\circ}}\bigcap_{\epsilon\in\NN}\bigcap_{a\in\ba^{\lceil t\pi^{\epsilon}\rceil}}\Ker(M\to F^\epsilon R_\infty\otimes_R M; \eta\mapsto ca\otimes \eta)
\end{equation}
where $R^{\circ}$ is $R\setminus \{0\}$.
\end{defn}
\begin{defn}[ultra-test ideal]
Let $(R,\ba^t)$ be a pair and $\m$ be the maximal ideal of $R$. We define the {\it ultra-test ideal} $\tau_{\rm u}(R,\ba^t)$ of a pair $(R,\ba^t)$ by
\begin{equation}
\tau_{\rm u} (R,\ba^t):=\Ann_R \cl{E}{\ba} 
\end{equation}
where $E=E_R(R/\m)$ is the injective hull of the residue field $R/\m$.
\end{defn}
\begin{defn}[Ultra-$F$-regular]
Let $(R,\ba^t)$ be a $\Q$-Gorenstein pair.
$(R,\ba^t)$ is said to be {\it ultra-$F$-regular} if $\tau_{\rm u} (R,\ba^t)=R$.
\end{defn}
\begin{defn}
A ring extension $R\hookrightarrow S$ is said to be {\it pure} if for any $R$-module $M$, $M\to S\otimes_R M$ is injective.
\end{defn}
\begin{lem} The following are equivalent.
\begin{enumerate}
\item[$(\textup{i})$] $(R,\ba^t)$ is an ultra-$F$-regular pair.
\item[$(\textup{ii})$]For all $c\in R^{\circ}$, there exist $\epsilon\in\NN$ and $a\in\ba^{\lceil t\pi^{\epsilon}\rceil}$ such that 
\begin{equation}
E\to F^\epsilon R_\infty\otimes_R E; \eta\mapsto ca\otimes \eta
\end{equation}
is injective.
\item[$(\textup{iii})$]For all $c\in R^{\circ}$, there exist $\epsilon\in\NN$ and $a\in\ba^{\lceil t\pi^{\epsilon}\rceil}$ such that 
\begin{equation}
R\to F^\epsilon R_\infty;x\mapsto caF^\epsilon(x)
\end{equation} 
is pure.
\end{enumerate}
\end{lem}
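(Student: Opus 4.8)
The plan is to prove the cycle of equivalences by isolating the only non-formal input---the Matlis-duality criterion for purity---and deriving everything else directly from the definitions and from the fact that $E=E_R(R/\m)$ is an essential extension of a simple socle. Throughout, fix $c\in R^{\circ}$, $\epsilon=\ulim e_p\in\NN$ and $a\in\ba^{\lceil t\pi^{\epsilon}\rceil}$, and consider the $R$-linear map $\phi_{c,\epsilon,a}\colon R\to F^{\epsilon}R_\infty$, $x\mapsto caF^{\epsilon}(x)$, from condition (iii). Since $F^{\epsilon}(1)=\ulim 1^{p^{e_p}}=1$ in $R_\infty$, its base change along $R\to E$ is
\begin{equation*}
\phi_{c,\epsilon,a}\otimes_R\id_E\colon\ E\cong R\otimes_R E\longrightarrow F^{\epsilon}R_\infty\otimes_R E,\qquad \eta\longmapsto ca\otimes\eta ,
\end{equation*}
which is exactly the map appearing in condition (ii); write $N_{\epsilon,a}(c)$ for its kernel.

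Next I would settle (ii) $\Leftrightarrow$ (iii) using the standard fact that, over a Noetherian local ring $(R,\m)$ that is approximately Gorenstein, a homomorphism of $R$-modules $\phi\colon R\to M$ is pure if and only if $\phi\otimes_R\id_E$ is injective. (Purity of $\phi$ gives injectivity of $\phi\otimes_R\id_E$ by definition; conversely, injectivity of $\phi\otimes_R\id_E$ forces $\phi\otimes_R\id_Q$ to be injective for every finite-length $Q$, via an embedding $Q\hookrightarrow E^{\oplus n}$, hence $R/I\to M/IM$ is injective for every $\m$-primary $I$ and then, by Krull's intersection theorem, for every ideal $I$; and cyclic purity implies purity because $R$ is approximately Gorenstein.) Our $R$ is a normal local domain essentially of finite type over $\C$---in particular an excellent reduced local ring---hence approximately Gorenstein by Hochster's theorem, so the criterion applies with $M=F^{\epsilon}R_\infty$ (which is an $R$-module via $F^{\epsilon}$, and $\phi_{c,\epsilon,a}$ is $R$-linear since $F^{\epsilon}$ is a ring homomorphism). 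Thus, for each $c$ and each admissible $(\epsilon,a)$, the map $\phi_{c,\epsilon,a}$ is pure if and only if $N_{\epsilon,a}(c)=0$; quantifying ``for all $c\in R^{\circ}$ there exist $\epsilon,a$'' on both sides yields (ii) $\Leftrightarrow$ (iii).

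Then I would prove (i) $\Leftrightarrow$ (ii). Since the annihilator of an $R$-module equals $R$ exactly when the module is zero, (i) is equivalent to $\cl{E}{\ba}=0$. By definition $\cl{E}{\ba}=\bigcup_{c\in R^{\circ}}\bigcap_{\epsilon\in\NN}\bigcap_{a\in\ba^{\lceil t\pi^{\epsilon}\rceil}}N_{\epsilon,a}(c)$, and (as every member of the union contains $0$) this vanishes if and only if $\bigcap_{\epsilon,a}N_{\epsilon,a}(c)=0$ for every $c$. Here the decisive point is that $E=E_R(R/\m)$ is an essential extension of its socle, which is the simple module $R/\m$; hence every nonzero submodule of $E$ contains the socle, and therefore an arbitrary intersection of submodules of $E$ is zero if and only if at least one of them is already zero. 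Consequently $\bigcap_{\epsilon,a}N_{\epsilon,a}(c)=0$ if and only if there exist $\epsilon\in\NN$ and $a\in\ba^{\lceil t\pi^{\epsilon}\rceil}$ with $N_{\epsilon,a}(c)=0$, i.e.\ with $E\to F^{\epsilon}R_\infty\otimes_R E$, $\eta\mapsto ca\otimes\eta$, injective. Requiring this for all $c\in R^{\circ}$ is precisely condition (ii), so (i) $\Leftrightarrow$ (ii) and the cycle is closed.

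The main obstacle here is not truly hard: everything except the purity/Matlis-duality criterion is direct bookkeeping with the definitions of $\cl{E}{\ba}$ and $\tau_{\rm u}(R,\ba^t)$, with the $R$-module structure on $F^{\epsilon}R_\infty$, and with the socle of $E$. I expect the only points needing care to be stating that criterion correctly and checking its two inputs in our setting---that the base change of $\phi_{c,\epsilon,a}$ along $R\to E$ is literally the map appearing in (ii), and that $R$, being a normal local domain, falls under Hochster's approximately-Gorenstein hypothesis.
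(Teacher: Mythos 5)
Your proof is correct and follows essentially the same route as the paper: (i)$\Leftrightarrow$(ii) by unwinding the definition of $\cl{E}{\ba}$, and (ii)$\Leftrightarrow$(iii) via the criterion that a map out of $R$ is pure exactly when it stays injective after tensoring with $E$. The only difference is that you supply the details the paper leaves implicit---the essential-socle argument showing that a vanishing intersection of kernels in $E$ forces one kernel to vanish, and a proof of the purity criterion via cyclic purity and Hochster's approximately Gorenstein theorem, where the paper simply asserts ``$\varphi$ pure iff $\varphi\otimes E$ injective.''
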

\begin{proof}
The condition that $(R,\ba^t)$ is an ultra-$F$-regular pair is equivalent to the following by definition.
\begin{equation}
0=\bigcup_{c\in R^{\circ}}\bigcap_{\epsilon\in\NN}\bigcap_{a\in\ba^{\lceil t\pi^{\epsilon}\rceil}}\Ker(E\to F^\epsilon R_\infty\otimes_R E; \eta\mapsto ca\otimes \eta)
\end{equation}
Hence, the second condition is equivalent to the first. Let $M,N$ be $R$-modules and $\varphi : M\to N$ be an $R$-module homomorphism. Then, $\varphi:M \to N$ is pure if and only if $\varphi\otimes E:M\otimes E\to N\otimes E$ is injective. Therefore, the third condition is equivalent to the second.
\end{proof}
\begin{cor}
Let $R$ be a local domain essentially of finite type over $\C$. The following are equivalent.
\begin{enumerate}
\item[$(\textup{i})$] $(R,R^t)$ is an ultra-F-regular pair.
\item[$(\textup{ii})$] For all $c\in R^\circ$, there exists $\epsilon\in\NN$ such that $R\to F^\epsilon R_\infty;x\mapsto cF^\epsilon(x)$
\end{enumerate}
The above conditions are equivalent to the ultra-$F$-regularity of $R$ in \cite[Definition 3.3]{log-terminal}.
\end{cor}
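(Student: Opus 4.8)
The plan is to obtain the corollary as the special case $\ba=R$ of the preceding Lemma. Although Section~5 fixes a normal $\Q$-Gorenstein pair, the proof of that Lemma uses nothing about $\ba$ or about $\Q$-Gorensteinness: it merely unwinds the definition of $\cl{E}{\ba}$ and invokes the elementary fact that an $R$-linear map $M\to N$ is pure if and only if $M\otimes_R E\to N\otimes_R E$ is injective. Hence the equivalences of the Lemma, read as statements about the condition $\tau_{\rm u}(R,\ba^t)=R$, hold verbatim for an arbitrary pair $(R,\ba^t)$ with $R$ a local domain essentially of finite type over $\C$. I would state this once and then specialize.

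Now take $\ba=R$. For every $\epsilon\in\NN$ we have $\ba^{\lceil t\pi^{\epsilon}\rceil}=R$, so in parts (ii) and (iii) of the Lemma the element $a\in\ba^{\lceil t\pi^{\epsilon}\rceil}$ may always be chosen to be $1$, and choosing $a=1$ is the weakest possible requirement. Thus part (iii) of the Lemma, applied to $(R,R^t)$, says exactly that for every $c\in R^{\circ}$ there is some $\epsilon\in\NN$ for which $R\to F^\epsilon_* R_\infty$, $x\mapsto cF^\epsilon(x)$, is pure, which is condition (ii) of the corollary. So $(\mathrm{i})\Leftrightarrow(\mathrm{ii})$ is immediate; note that the real number $t$ never intervenes, since it enters only through the exponent of the unit ideal.

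For the final sentence I would unwind Schoutens' Definition~3.3 in \cite{log-terminal}: there $R$ is called ultra-$F$-regular precisely when for each $c\in R^{\circ}$ there is an ultra-Frobenius $F^\epsilon$ making the $R$-linear map $R\to F^\epsilon_* R_\infty$, $x\mapsto cF^\epsilon(x)$, pure. Both papers use the same ultra-Frobenius (each sends $\ulim x_p$ to $\ulim x_p^{p^{e_p}}$) and the same notion of $R$-module purity, so this is literally condition (ii); if Schoutens instead phrases the requirement as injectivity of $E\to F^\epsilon_* R_\infty\otimes_R E$, or as a splitting when $R$ is complete, I would again bridge it with the purity versus injectivity-after-$\otimes_R E$ equivalence already used in the Lemma. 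The corollary is therefore formal once the definitions are matched; the only point requiring care is the bookkeeping of confirming that the ultra-Frobenius, the module structure on $F^\epsilon_* R_\infty$, and the meaning of ``pure'' agree across the two sources, and I do not expect any genuine obstacle.
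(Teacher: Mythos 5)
Your proposal is correct and follows the same route as the paper, whose proof is simply the observation that the corollary is the special case $\ba=R$ of the preceding lemma, so that one may take $a=1$ in conditions (ii) and (iii) and recover Schoutens' purity condition from Definition 3.3 of \cite{log-terminal}. Your extra remark that the lemma's proof never uses the $\Q$-Gorenstein hypothesis (needed since the corollary is stated for an arbitrary local domain) is a worthwhile clarification, but it does not change the argument.
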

\begin{proof}
This follows from the above lemma.
\end{proof}

\begin{defn}[finitistic ultra-test ideal]
Let $(R,\ba^t)$ be a $\Q$-Gorenstein pair.
\begin{equation}
\tau_{\rm fu}(R,\ba^t):=\bigcap_{M}\Ann_R\cl{M}{\ba},
\end{equation}
where $M$ runs through all finitely generated $R$-modules.
\end{defn}
\begin{prop}\label{M<E}
Let $(R,\ba^t)$ be a $\Q$-Gorenstein pair.Then 
\begin{equation}
\tau_{\rm fu}(R,\ba^t)=\bigcap_{M}\Ann_R\cl{M}{\ba}, 
\end{equation}
where $M$ runs through all finitely generated $R$-submodules of $E$.
\end{prop}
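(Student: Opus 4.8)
The plan is to prove the two inclusions separately, the direction ``$\supseteq$'' being essentially trivial and the direction ``$\subseteq$'' carrying all the content. Write $\tau_{\rm fu}'(R,\ba^t)$ for the right-hand side, i.e.\ the intersection of $\Ann_R\cl{M}{\ba}$ over finitely generated submodules $M\subseteq E$. Since every finitely generated submodule of $E$ is in particular a finitely generated $R$-module, the intersection defining $\tau_{\rm fu}(R,\ba^t)$ ranges over a larger family, so $\tau_{\rm fu}(R,\ba^t)\subseteq\tau_{\rm fu}'(R,\ba^t)$ is immediate.

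For the reverse inclusion, I would fix an arbitrary finitely generated $R$-module $M$ and an element $\eta\in\cl{M}{\ba}$, and show that $\tau_{\rm fu}'(R,\ba^t)\cdot\eta=0$. First unwind the definition of the ultra-tight closure: there is a fixed $c_0\in R^\circ$ such that for every $\epsilon\in\NN$ and every $a\in\ba^{\lceil t\pi^\epsilon\rceil}$ we have $c_0a\otimes\eta=0$ in $F^\epsilon R_\infty\otimes_R M$. The submodule $N:=R\eta\subseteq M$ is cyclic; by Matlis-type duality for the (complete) local ring $R$, or more elementarily by embedding a suitable quotient of $N$ into $E$, one produces a finitely generated submodule $M'\subseteq E$ together with an element $\eta'\in\cl{M'}{\ba}$ whose annihilator is contained in $\Ann_R\eta$. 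Concretely: for any $x\in\m$ with $x\notin\Ann_R\eta$ (the interesting case), choose an $R$-linear map $\phi\colon N\to E$ with $\phi(\eta)\neq0$; set $M'=\phi(N)$ and $\eta'=\phi(\eta)$. Functoriality of $-\otimes_R F^\epsilon R_\infty$ applied to $\phi$ shows $c_0a\otimes\eta'=0$ in $F^\epsilon R_\infty\otimes_R M'$ for all the relevant $\epsilon,a$, hence $\eta'\in\cl{M'}{\ba}$, so $\tau_{\rm fu}'(R,\ba^t)\subseteq\Ann_R\eta'$. Running over enough such $\phi$ to separate points of $N$ then forces $\tau_{\rm fu}'(R,\ba^t)\cdot\eta=0$, as desired.

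The one point that needs care — and which I expect to be the main obstacle — is the compatibility of the tensor construction with the map $\phi$: one must check that $F^\epsilon_*R_\infty\otimes_R(-)$ behaves functorially in the module variable in the way used above, i.e.\ that a relation $c_0a\otimes\eta=0$ upstairs pushes forward to $c_0a\otimes\phi(\eta)=0$ downstairs. This is formal (it is just applying the functor $F^\epsilon_*R_\infty\otimes_R-$ to the homomorphism $\phi\colon N\hookrightarrow M$ and to $\phi\colon N\twoheadrightarrow M'$), but the subtlety is that one needs it uniformly in $\epsilon$ with a single fixed multiplier $c_0$, so that the same $c_0$ witnesses membership of $\eta'$ in $\cl{M'}{\ba}$; since $c_0$ is chosen before $\epsilon$ in the definition of the ultra-tight closure, this causes no trouble. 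One should also note that since $R$ is $\Q$-Gorenstein, hence in particular has a dualizing module, every nonzero element of a finitely generated module can be detected by a map to $E$ after passing to the $\m$-adic completion; working over $\widehat{R}$ and using that $E_R(R/\m)=E_{\widehat R}(\widehat R/\m\widehat R)$ makes the separation-of-points argument legitimate. With these checks in place the two inclusions combine to give the claimed equality.
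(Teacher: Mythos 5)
Your strategy is in essence the paper's own: push the defining relations $c_0a\otimes\eta=0$ forward along module homomorphisms (with the same fixed $c_0$, which, as you note, is unproblematic) so as to detect $\eta$ inside finitely generated submodules of $E$, and then use that such submodules suffice to separate points. Two places in your write-up need repair, though both are fixable with standard one-liners. First, the domain of your test map: the hypothesis $\eta\in\cl{M}{\ba}$ is a statement in $F^\epsilon R_\infty\otimes_R M$, and it does \emph{not} descend to the cyclic submodule $N=R\eta$, since $F^\epsilon R_\infty\otimes_R N\to F^\epsilon R_\infty\otimes_R M$ need not be injective; hence applying $F^\epsilon R_\infty\otimes_R-$ to $N\twoheadrightarrow M'$, as you propose, gives nothing. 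You must first extend $\phi\colon N\to E$ to $\tilde\phi\colon M\to E$ using injectivity of $E$, and then push the relation along the surjection $M\twoheadrightarrow \tilde\phi(M)\subseteq E$. Second, the nonvanishing condition: to rule out $b\eta\neq0$ for a given $b$ in the right-hand side you need $\phi(b\eta)\neq0$, not merely $\phi(\eta)\neq0$; your closing ``separate points of $N$'' formulation is the correct one, but the interim claim that one can find a single $\eta'\in E$ with $\Ann_R\eta'\subseteq\Ann_R\eta$ is false in general, since every element of $E$ is killed by a power of $\m$ while $\Ann_R\eta$ may be zero.

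For comparison, the paper sidesteps the extension step differently: given $ax\neq0$ with $x\in\cl{M}{\ba}$, it chooses $n$ with $ax\notin\m^nM$ (Krull intersection), passes to $N=M/\m^nM$, where the closure relation does push forward, observes $\Ass N=\{\m\}$ so that $N\hookrightarrow E^{\oplus l}$, and reads off the contradiction from the components, which are finitely generated submodules of $E$. Your separation-of-points route, once corrected as above, is equally valid and arguably more direct; note also that the detour through $\widehat R$, Matlis duality and the dualizing module is unnecessary, since for any nonzero $y$ in an $R$-module the composite $Ry\cong R/\Ann_R(y)\twoheadrightarrow R/\m\hookrightarrow E$ extended by injectivity of $E$ already separates points over $R$ itself; the $\Q$-Gorenstein hypothesis plays no role in this proposition.
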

\begin{proof}
$\tau_{\rm fu}(R,\ba^t)\subseteq \bigcap_{M\subset E}\Ann_R\cl{M}{\ba}$, where $M$ runs through all finitely generated $R$-submodules of $E$, is clear. We need to prove the reverse inclusion. If 
\begin{equation}
a\in \bigcap_{M \subset E}\Ann_R\cl{M}{\ba},
\end{equation}
 where $M$ runs through all finitely generated $R$-submodules of $E$, but $a\notin\tau_{\rm fu}(R,\ba^t)$,
then there exist a finitely generated $R$-module such that $a\notin \Ann_R\cl{M}{\ba}$.
Hence there exists $x\in \cl{M}{\ba}$ such that $ax\neq 0$. Then there exists $n$ such that $ax\notin\m^n M$. Let 
\begin{equation}
N:=M/\m^nM, \overline{x}:=x+\m^nM,
\end{equation}
then $\overline{x}\in\cl{N}{\ba}$. Since $\Ass N=\{\m\}$, there exists $l\in \N$ such that $N\hookrightarrow E^{\oplus l}$. Therefore, $a\overline{x}=0$, and this is a contradiction.
\end{proof}
\begin{cor}
Let $(R,\ba^t)$ be a $\Q$-Gorenstein pair. Then
$\tau_{\rm u}(R,\ba^t)\subseteq \tau_{\rm fu}(R,\ba^t)$
\end{cor}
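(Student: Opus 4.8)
The plan is to deduce this from the functoriality of the ultra-tight closure of zero together with Proposition \ref{M<E}. By that proposition, $\tau_{\rm fu}(R,\ba^t)=\bigcap_M\Ann_R\cl{M}{\ba}$ with $M$ ranging over the finitely generated $R$-submodules of $E$, so it suffices to show $\Ann_R\cl{E}{\ba}\subseteq\Ann_R\cl{M}{\ba}$ for each such $M$ and then intersect.

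First I would record the following functoriality statement: if $\iota\colon M\hookrightarrow N$ is any injection of $R$-modules, then $\iota\big(\cl{M}{\ba}\big)\subseteq\cl{N}{\ba}$. To see this, fix $x\in\cl{M}{\ba}$ and choose $c\in R^\circ$ witnessing membership, so that $ca\otimes x=0$ in $F^\epsilon R_\infty\otimes_R M$ for all $\epsilon\in\NN$ and all $a\in\ba^{\lceil t\pi^{\epsilon}\rceil}$. For each such $\epsilon$ and $a$, applying $\id_{F^\epsilon R_\infty}\otimes\iota\colon F^\epsilon R_\infty\otimes_R M\to F^\epsilon R_\infty\otimes_R N$ sends $ca\otimes x$ to $ca\otimes\iota(x)$; since the source element vanishes, so does $ca\otimes\iota(x)$. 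As the same $c$ works for every $\epsilon$ and $a$, this gives $\iota(x)\in\cl{N}{\ba}$.

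Applying this with $N=E$ and $M$ a finitely generated submodule of $E$ yields $\iota\big(\cl{M}{\ba}\big)\subseteq\cl{E}{\ba}$. Consequently any $r\in\Ann_R\cl{E}{\ba}$ kills $\iota\big(\cl{M}{\ba}\big)$, and since $\iota$ is injective it kills $\cl{M}{\ba}$; hence $\Ann_R\cl{E}{\ba}\subseteq\Ann_R\cl{M}{\ba}$. Intersecting over all finitely generated $M\subseteq E$ and invoking Proposition \ref{M<E} gives $\tau_{\rm u}(R,\ba^t)=\Ann_R\cl{E}{\ba}\subseteq\tau_{\rm fu}(R,\ba^t)$.

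I do not expect a genuine obstacle here: the only point that might cause hesitation is that $F^\epsilon R_\infty$ need not be $R$-flat (since $F^\epsilon$ is a kind of ultra-Frobenius), so one cannot in general compare the kernels $\Ker(M\to F^\epsilon R_\infty\otimes_R M)$ and $\Ker(E\to F^\epsilon R_\infty\otimes_R E)$ via left exactness. But the argument never needs this: it only pushes a vanishing relation forward along $\id\otimes\iota$, which is automatic, so no flatness is used.
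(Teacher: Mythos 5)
Your proof is correct and follows the same route as the paper: the paper's own argument is exactly the observation that $\cl{M}{\ba}\subseteq\cl{E}{\ba}$ for a submodule $M\subseteq E$ (which is your functoriality step, pushing the vanishing relation forward along $\id\otimes\iota$), combined with Proposition \ref{M<E} to identify $\tau_{\rm fu}$ with the intersection over finitely generated submodules of $E$. Your additional remark that no flatness of $F^\epsilon R_\infty$ is needed is accurate and just makes explicit why the paper's one-line inclusion is harmless.
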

\begin{proof}
If $M \subset E$, then $\cl{M}{\ba}\subset \cl{E}{\ba}$. Hence, the conclusion follows. 
\end{proof}
\begin{prop}\label{fg}
Let $(R,\ba^t)$ be a $\Q$-Gorenstein pair and let $J$ be an ideal of $R$ which has finite order in the divisor class group $\operatorname{Cl}(R)$.
Then $
\cl{{\operatorname{H}_{\m}^d(J)}}{\ba}=\operatorname{cl} _{\operatorname{H}_\m^d (J)}^{\ba^t \text{fg}}(0)
$, where 
\begin{equation}
\operatorname{cl} _{\operatorname{H}_\m^d (J)}^{\ba^t \text{\rm fg}}(0)=\bigcup_{M}\cl{M}{\ba},
\end{equation}.
where $M$ runs through all finitely generated $R$-submodules of $E$.
\end{prop}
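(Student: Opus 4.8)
The plan is to prove the two inclusions separately, following the pattern of the proof of Proposition \ref{M<E}. The inclusion $\operatorname{cl}_{\operatorname{H}_\m^d(J)}^{\ba^t\text{fg}}(0)\subseteq \cl{\operatorname{H}_\m^d(J)}{\ba}$ is formal: for any finitely generated $R$-submodule $M\subseteq \operatorname{H}_\m^d(J)$ the inclusion $M\hookrightarrow \operatorname{H}_\m^d(J)$ is compatible with the maps $\eta\mapsto ca\otimes\eta$, so if $x\in M$ lies in $\cl{M}{\ba}$ then its image in $\operatorname{H}_\m^d(J)$ lies in $\cl{\operatorname{H}_\m^d(J)}{\ba}$; taking the union over all such $M$ gives one containment. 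Here one should note that since $J$ has finite order in $\operatorname{Cl}(R)$, the module $\operatorname{H}_\m^d(J)$ is the top local cohomology of a rank-one module over a normal domain, hence it is an Artinian module whose socle is finite-dimensional, and in fact $\operatorname{H}_\m^d(J)$ embeds into a finite direct sum of copies of $E$; this is the structural fact that makes the reverse inclusion tractable.

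For the reverse inclusion $\cl{\operatorname{H}_\m^d(J)}{\ba}\subseteq \operatorname{cl}_{\operatorname{H}_\m^d(J)}^{\ba^t\text{fg}}(0)$, I would take $x\in\cl{\operatorname{H}_\m^d(J)}{\ba}$, so there is a fixed $c\in R^\circ$ with $ca\otimes x=0$ in $F^\epsilon R_\infty\otimes_R \operatorname{H}_\m^d(J)$ for all $\epsilon\in\NN$ and all $a\in\ba^{\lceil t\pi^\epsilon\rceil}$. The point is to push this relation down to a finitely generated submodule. Write $\operatorname{H}_\m^d(J)=\varinjlim M_\lambda$ as the filtered colimit of its finitely generated submodules $M_\lambda$, and choose $M_0$ containing $x$. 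Since tensor product commutes with filtered colimits, $F^\epsilon R_\infty\otimes_R \operatorname{H}_\m^d(J)=\varinjlim_\lambda (F^\epsilon R_\infty\otimes_R M_\lambda)$, so the vanishing of $ca\otimes x$ in the colimit means that for each pair $(\epsilon,a)$ there is some $\lambda(\epsilon,a)\ge 0$ with $ca\otimes x=0$ already in $F^\epsilon R_\infty\otimes_R M_{\lambda(\epsilon,a)}$. The difficulty is that a priori $\lambda$ depends on both $\epsilon$ and $a$, and $\NN$ is uncountable, so one cannot simply take a "union" of these submodules and stay finitely generated.

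The main obstacle, then, is a uniformity/noetherianity argument to remove this dependence. I expect to handle it as follows. First, for fixed $\epsilon$, the set of $a\in\ba^{\lceil t\pi^\epsilon\rceil}$ is a module over $R_\infty$ (or at least the relevant relations are captured by finitely many generators of $\ba^{\lceil t\pi^\epsilon\rceil}R_\infty$ after a \L o\'s-theorem/finite-presentation argument), so the dependence on $a$ can be reduced to finitely many $a$'s and hence absorbed into a single $M_\lambda$ depending only on $\epsilon$. Second, to remove the dependence on $\epsilon$: the submodules $M_\lambda$ of the Artinian module $\operatorname{H}_\m^d(J)$ satisfy the ascending chain condition on the sub-poset generated by any countable family, but more usefully, one shows that $x\in\cl{M_\lambda}{\ba}$ for a \emph{single} $M_\lambda$ by arguing that the witnessing submodule can be taken inside $(0:_{\operatorname{H}_\m^d(J)}\m^n)$ for $n$ large enough that $\m^n x=0$ forces the whole relation into a fixed finite-length piece — here the fact that $\operatorname{H}_\m^d(J)$ is Artinian, so that $(0:\m^n)$ is finitely generated and $x$ already lies in it, is what closes the gap, exactly mirroring the passage to $N=M/\m^nM$ in Proposition \ref{M<E}. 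Once $x\in\cl{M_\lambda}{\ba}$ for a finitely generated submodule $M_\lambda\subseteq\operatorname{H}_\m^d(J)\hookrightarrow E^{\oplus l}$, we are done by definition of $\operatorname{cl}^{\ba^t\text{fg}}$.
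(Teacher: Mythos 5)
The easy inclusion and the reduction of the problem to a uniformity statement are fine, but the step you propose to close the gap does not work, and it is exactly the non-formal heart of the proposition. Write $H=\H_{\m}^d(J)$. Knowing $ca\otimes x=0$ in $F^\epsilon_*R_\infty\otimes_R H$ does \emph{not} descend to $ca\otimes x=0$ in $F^\epsilon_*R_\infty\otimes_R(0:_H\m^n)$: since $F^\epsilon_*R_\infty$ is not flat over $R$ (this is the ultraproduct analog of Kunz; $R$ is not regular here), the map $F^\epsilon_*R_\infty\otimes_R(0:_H\m^n)\to F^\epsilon_*R_\infty\otimes_R H$ need not be injective, so the relation witnessing the vanishing in $H$ may involve elements far outside $(0:_H\m^n)$, with the required submodule growing with $(\epsilon,a)$. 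This is not a mirror of Proposition \ref{M<E}: there one passes to a \emph{quotient} $M/\m^nM$, which is the easy direction because tensoring is right exact and closures push forward along any homomorphism; here you must pass to a \emph{submodule}, the hard direction. Nor can you absorb the family $M_{\lambda(\epsilon,a)}$ (indexed by the uncountable set $\NN$) into one finitely generated module: an Artinian module has DCC, not ACC, on submodules, and its finitely generated submodules have no maximal element. A telling symptom is that your argument never uses the hypothesis that $J$ has finite order in $\operatorname{Cl}(R)$; if such a formal Artinian-module argument were valid it would identify $\cl{M}{\ba}$ with its finitistic version for an arbitrary Artinian module (e.g. for $E$ itself), which is precisely the kind of statement one cannot prove this way and is not what the proposition asserts.

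The paper's proof supplies the missing input from positive characteristic. The finite-order hypothesis is used to choose $x_1$ with $J^{(r)}=(x_1)$ and complete it to a system of parameters, giving the concrete presentation $H\cong\varinjlim_s R/(x_1^sJ,x_2^s,\dots,x_d^s)$ and compatible presentations $H_p\cong\varinjlim_s R_p/(x_{1p}^sJ_p,x_{2p}^s,\dots,x_{dp}^s)$ of the approximations. Given $\eta=[z+(x_1^sJ,x_2^s,\dots,x_d^s)]\in\cl{H}{\ba}$, one first shows (by contradiction, taking ultraproducts of the would-be witnesses, as in Proposition \ref{mult<test}) that $\eta_p\in 0^{*\ba_p^t}_{H_p}$ for almost all $p$; then the characteristic-$p$ results of Hara--Yoshida (\cite[Theorem 1.13, Definition-Theorem 6.5]{HY}) show that $[z_p]$ already lies in the tight closure of zero of the finite-length module $R_p/(x_{1p}^{s+1}J_p,x_{2p}^{s+1},\dots,x_{dp}^{s+1})$, i.e. the vanishing is captured at the stage $s+1$, \emph{uniformly in} $p$. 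Passing back through the ultraproduct gives $\eta\in\cl{{R/(x_1^{s+1}J,x_2^{s+1},\dots,x_d^{s+1})}}{\ba}$, hence $\eta$ lies in the finitistic closure. It is this uniform bound on the stage, coming from genuine tight-closure arguments in characteristic $p$ (and from the special system of parameters made possible by the hypothesis on $J$), that your proposal is missing.
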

\begin{proof}
Take $x_1\in R$ such that $J^{(r)}=(x_1)$ and let $U=R\setminus \bigcup_{\p\in\Ass R/J}\p$. Then there exists an element $a\in J$ such that $JU^{-1}R=aU^{-1}R$ and there exists $x_2 \in U$ such that $x_2 J\subseteq (a)R$. Take $x_1,\dots, x_d$ to be a system of parameters of $R$. Then $x_{1p},\dots, x_{dp}$ has the same property for $J_p$ and $U_p$. Let $H$ be $\operatorname{H}_{\m}^d(J)$ and let $H_p$ be $\operatorname{H}_{\m_p}^d(J_p)$. 
Then $H\cong \varinjlim_s R/(x_1^sJ,x_2^s,\dots,x_d^s)$ and $H_p\cong \varinjlim_s R_p/(x_{1p}^sJ_p,x_{2p}^s,\dots,x_{dp}^s)$.
For $\eta\in H$,
\begin{equation} 
\eta=[z+(x_1^sJ,\dots,x_d^s)],
\end{equation}
 we define the approximation of $\eta$ by 
\begin{equation}
\eta_p=[z_p+(x_{1p}^sJ_p,\dots, x_{dp}^s)].
\end{equation}
If $\eta\in \cl{H}{\ba}$ such that $\eta=[z+(x_1^sJ,\dots,x_d^s)]$, using the proof by contradiction, $\eta_p \in 0_{H_p}^{*\ba^t}$ for almost all $p$.
$\eta_p=[z_p+(x_{1p}^sJ_p,\dots, x_{dp}^s)]$, then $[z_p] \in 0_{R_p/(x_{1p}^{s+1}J_p,x_{2p}^{s+1},\dots,x_{dp}^{s+1})}^{*\ba^t}$ by the proof of \cite[Theorem 1.13, Definition-Theorem 6.5]{HY}. Hence $\eta \in \cl{{R/(x_1^{s+1}J,x_2^{s+1},\dots,x_d^{s+1})}}{\ba}$ and $\eta \in \operatorname{cl}_H^{\ba^t\text{fg}}(0)$.
\end{proof}
\begin{cor}\label{finitistic ultra}
Let $(R,\ba^t)$ be a $\Q$-Gorenstein pair.
 Then 
\begin{equation}
\tau_{\rm u}(R,\ba^t)=\bigcap_{M}\Ann_R\cl{M}{\ba}=\tau_{\rm fu}(R,\ba^t),
\end{equation}
 where $M$ runs through all finitely generated $R$-submodules of $E$.
\end{cor}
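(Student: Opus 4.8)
My plan is to prove the two displayed equalities separately. The right‑hand one, $\bigcap_{M}\Ann_R\cl{M}{\ba}=\tau_{\rm fu}(R,\ba^t)$ with $M$ running over the finitely generated $R$‑submodules of $E$, is exactly the content of Proposition~\ref{M<E}, so nothing new is needed there. For the left‑hand equality, since $\tau_{\rm u}(R,\ba^t)=\Ann_R\cl{E}{\ba}$ and $\bigcap_{M\subseteq E}\Ann_R\cl{M}{\ba}=\Ann_R\big(\bigcup_{M\subseteq E}\cl{M}{\ba}\big)$ (an element kills each term of a union iff it kills the union), it suffices to establish
\[
\cl{E}{\ba}=\bigcup_{M\subseteq E}\cl{M}{\ba},
\]
the union taken over the finitely generated $R$‑submodules $M$ of $E$. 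The inclusion $\supseteq$ is immediate, as $\cl{M}{\ba}\subseteq\cl{E}{\ba}$ for each such $M$; the work is in the inclusion $\subseteq$.

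To get $\subseteq$ I would realize $E$ as the top local cohomology of a divisorial ideal of finite order in $\operatorname{Cl}(R)$, so as to bring Proposition~\ref{fg} to bear. Since $R$ is a normal domain, its canonical module $\omega_R$ is reflexive of rank one and hence is isomorphic to a divisorial ideal $J\subseteq R$; since $R$ is $\Q$‑Gorenstein, the class of $J$ has finite order in $\operatorname{Cl}(R)$. As $R$ is essentially of finite type over $\C$, it is a localization of a quotient of a regular local ring, so Grothendieck local duality is available; combining it with the isomorphism $\Hom_R(\omega_R,\omega_R)\cong R$ — which holds because $R$ is normal, the natural map $R\to\Hom_R(\omega_R,\omega_R)$ being a morphism of reflexive $R$‑modules that is an isomorphism in codimension one — together with Matlis duality, one obtains a natural isomorphism $\operatorname{H}_\m^d(J)=\operatorname{H}_\m^d(\omega_R)\cong E$, where $d=\dim R$.

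With this in hand I would apply Proposition~\ref{fg} to the ideal $J$: it gives $\cl{\operatorname{H}_\m^d(J)}{\ba}=\operatorname{cl}_{\operatorname{H}_\m^d(J)}^{\ba^t\text{fg}}(0)=\bigcup_N\cl{N}{\ba}$, the union over the finitely generated $R$‑submodules $N$ of $\operatorname{H}_\m^d(J)$. Transporting this identity along the isomorphism $\operatorname{H}_\m^d(J)\cong E$ — which matches finitely generated submodules with finitely generated submodules and commutes with the formation of the ultra‑tight closure of zero, the latter being functorial in the module — yields $\cl{E}{\ba}=\bigcup_{M\subseteq E}\cl{M}{\ba}$. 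Feeding this back, $\tau_{\rm u}(R,\ba^t)=\Ann_R\cl{E}{\ba}=\bigcap_{M\subseteq E}\Ann_R\cl{M}{\ba}=\tau_{\rm fu}(R,\ba^t)$, which is the assertion.

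The genuine difficulty does not lie in this deduction but upstream, in Proposition~\ref{fg}: that proof must transfer membership in $\cl{\operatorname{H}_\m^d(J)}{\ba}$ to almost all characteristic‑$p$ reductions and then run the finite‑generation argument of Hara--Yoshida for the $\ba^t$‑tight closure of zero in the corresponding top local cohomology modules $\operatorname{H}_{\m_p}^d(J_p)$. The one place within the present argument that calls for care is the identification $E\cong\operatorname{H}_\m^d(\omega_R)$ in the possibly non‑Cohen--Macaulay case, which is exactly what lets one apply Proposition~\ref{fg}, phrased for top local cohomology of divisorial ideals, to the injective hull $E$ itself.
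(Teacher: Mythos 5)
Your argument is correct and is essentially the paper's own proof: the second equality is Proposition \ref{M<E}, and the first comes from Proposition \ref{fg} applied to a divisorial ideal $J\cong\omega_R$, whose class has finite order in $\operatorname{Cl}(R)$ by the $\Q$-Gorenstein hypothesis, via the identification $E\cong\operatorname{H}^d_\m(\omega_R)$. The only difference is that you spell out this identification (through $\Hom_R(\omega_R,\omega_R)\cong R$ and local/Matlis duality), which the paper leaves implicit.
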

\begin{proof}
By Proposition \ref{M<E} and Proposition \ref{fg}, the conclusion follows.
\end{proof}
\begin{prop}\label{mult<test}
	Let $(R,\ba^t)$ be a $\Q$-Gorenstein pair. Then $\mathcal{J}(R, \ba^t)\subseteq \tau_{\rm u}(R,\mathfrak{a}^t)$.
\end{prop}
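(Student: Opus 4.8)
The plan is to move everything to the positive-characteristic approximations and quote Theorem~\ref{approx of mult}, which says $\mathcal{J}(R,\ba^t)_p=\tau(R_p,\ba_p^t)$ for almost all $p$. By Corollary~\ref{finitistic ultra}, $\tau_{\rm u}(R,\ba^t)=\bigcap_M\Ann_R\cl{M}{\ba}$ with $M$ ranging over the finitely generated $R$-submodules of $E$, so it suffices to fix such an $M$ and prove $\mathcal{J}(R,\ba^t)\cdot\cl{M}{\ba}=0$. Fix an approximation $(M_p)$ of $M$; then $M\otimes_R R_\infty=\ulim M_p$ and, for $\epsilon=\ulim e_p\in\NN$, one has $F^\epsilon_*R_\infty\otimes_R M=\ulim\bigl(F^{e_p}_*R_p\otimes_{R_p}M_p\bigr)$ ($F^{e_p}$ the $e_p$-th Frobenius of $R_p$), compatibly with the structure maps $\eta\mapsto ca\otimes\eta$ occurring in the definition of $\cl{M}{\ba}$.

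The first step, which is the crux, is: if $\eta\in\cl{M}{\ba}$, then $\eta_p\in 0^{*\ba_p^t}_{M_p}$ for almost all $p$. I would prove this by contradiction, exactly as in Proposition~\ref{fg}. Pick $c\in R^{\circ}$ with $ca\otimes\eta=0$ in $F^\epsilon_*R_\infty\otimes_R M$ for all $\epsilon\in\NN$ and all $a\in\ba^{\lceil t\pi^\epsilon\rceil}$; note $c_p\neq 0$ for almost all $p$. If the conclusion failed, the set $T$ of primes $p$ with $c_p\neq 0$ and $\eta_p\notin 0^{*\ba_p^t}_{M_p}$ would lie in the ultrafilter; for each $p\in T$, since $c_p$ fails to witness $\eta_p\in 0^{*\ba_p^t}_{M_p}$, there are $e_p$ and $a'_p\in\ba_p^{\lceil tp^{e_p}\rceil}$ with $c_pa'_p\otimes\eta_p\neq 0$ in $F^{e_p}_*R_p\otimes_{R_p}M_p$. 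Setting $e_p:=0$ and $a'_p:=0$ for $p\notin T$, and putting $\epsilon:=\ulim e_p$ and $a:=\ulim a'_p\in\ba^{\lceil t\pi^\epsilon\rceil}$, we get $ca\otimes\eta=\ulim(c_pa'_p\otimes\eta_p)\neq 0$ in $F^\epsilon_*R_\infty\otimes_R M$, since $\{p:c_pa'_p\otimes\eta_p=0\}$ is disjoint from $T$; this contradicts the choice of $c$.

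The second step is that $\tau(R_p,\ba_p^t)\subseteq\Ann_{R_p}(0^{*\ba_p^t}_{M_p})$ for almost all $p$. As a finitely generated submodule of the Artinian module $E$, $M$ has finite length, so is killed by some $\m^k$; hence $M_p$ is killed by $\m_p^k$, has finite length over $R_p$, and therefore embeds, as a finitely generated submodule, into a finite direct sum of copies of $E_{R_p}(R_p/\m_p)$. Since $\ba^t$-tight closure of zero is compatible with submodule inclusions and with finite direct sums (here one uses that $R_p$ is a domain to pick a common test element for the summands), $\Ann_{R_p}(0^{*\ba_p^t}_{M_p})$ contains an intersection of ideals $\Ann_{R_p}(0^{*\ba_p^t}_N)$ with $N$ a finitely generated submodule of $E_{R_p}(R_p/\m_p)$, each of which contains $\tau(R_p,\ba_p^t)$ by the definition of the Hara--Yoshida test ideal. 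Now take $d\in\mathcal{J}(R,\ba^t)$: by Theorem~\ref{approx of mult}, $d_p\in\mathcal{J}(R,\ba^t)_p=\tau(R_p,\ba_p^t)$ for almost all $p$, so the two steps give $d_p\eta_p=0$ in $M_p$ for almost all $p$; hence $d\eta=\ulim(d_p\eta_p)=0$ in $M\otimes_R R_\infty$, and since $R\hookrightarrow R_\infty$ is faithfully flat, $M\hookrightarrow M\otimes_R R_\infty$, so $d\eta=0$ in $M$.

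The main obstacle is the first step: from one ultra-tight-closure relation, with a single witness $c$ and quantifiers over all non-standard $\epsilon$ and all $a$, one must manufacture an honest $\ba^t$-tight closure relation in almost every characteristic, and the delicate point---forcing the argument to be by contradiction rather than a direct translation---is that a hypothetical failure must be exhibited by the \emph{same} approximation $c_p$ on an ultrafilter-large set of primes. The second step carries only a minor technical wrinkle: the Hara--Yoshida test ideal is built from submodules of $E_{R_p}(R_p/\m_p)$, whereas $M_p$ is merely an approximation, so one invokes finite length to bring it into that form.
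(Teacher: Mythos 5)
Your proof is correct and follows essentially the same route as the paper: reduce to finitely generated submodules of $E$ (you via Corollary \ref{finitistic ultra}, the paper by invoking Proposition \ref{fg} directly), transfer the ultra-tight closure relation to $\eta_p\in 0^{*\ba_p^t}_{M_p}$ for almost all $p$ by the same contradiction/ultraproduct argument, and conclude with Theorem \ref{approx of mult} together with faithful flatness of $R\to R_\infty$. Your second step, embedding the finite-length approximation $M_p$ into a finite direct sum of copies of $E_{R_p}(R_p/\m_p)$ to see that $\tau(R_p,\ba_p^t)$ annihilates $0^{*\ba_p^t}_{M_p}$, merely spells out a detail the paper leaves implicit.
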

\begin{proof}
	Let $x\in \mathcal{J}(R,\ba^t)=\ulim \tau(R_p, \ba_p^t)\cap R$.
	Let $\eta \in \cl{E}{\ba}$, then there exist
	\begin{equation}
		c\in \mathcal{J}(R,\ba^t)\cap R^\circ
	\end{equation}
	such that for any $\epsilon \in \NN$, $a\in\ba^{\lceil t\pi^{\epsilon}\rceil}$, the image of $\eta$ under the morphism 
	\begin{equation}
		E\to F^\epsilon_* R_\infty\otimes_R E; \eta\mapsto ca\otimes \eta
	\end{equation}
	is zero.
	By Proposition \ref{fg}, there exists a finitely generated submodule $M$ of $E$ which contains $\eta$ such that the image of $\eta$ under the morphism 
	\begin{equation}
		M\to F^\epsilon_* R_\infty\otimes_R M; \eta\mapsto ca\otimes \eta
	\end{equation}
	is zero.
	If for almost all $p$, $\eta_p\notin 0_{M_p}^{*\ba_p^t}$, then there exist  $e_p \in \N$ and $a_p \in \ba_p^{\lceil tp^{e_p}\rceil}$, the image of $\eta_p$ under the morphism 
	\begin{equation}
		M_p\to F^{e_p} _* R_p\otimes_{R_p} M_p; \eta_p\mapsto ca\otimes \eta_p
	\end{equation}
	is non-zero.
	Then we take $\eta:=\ulim e_p, a:=\ulim a_p$, then taking the ultraproduct, the image of $\eta$ under the morphism 
	\begin{equation}
		M\to F^\epsilon_* R_\infty\otimes_R M; \eta\mapsto ca\otimes \eta
	\end{equation}
	is non-zero. This is a contradiction. Therefore $\eta_p\in 0_{M_p}^{*\ba_p^t}$ for almost all $p$.
	Then $x_p\eta_p=0$ for almost all $p$ and $x\eta=0$. In conclusion, $x\in \tau_{\rm u}(R, \ba^t)$.
\end{proof}
\begin{cor}
	If $(R,\ba^t)$ is a Kawamata log terminal pair, then $(R, \ba^t)$ is ultra-$F$-regular.
\end{cor}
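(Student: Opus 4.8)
The plan is to deduce this immediately from Proposition \ref{mult<test}. First I would unwind the definitions: a Kawamata log terminal pair $(R,\ba^t)$ is in particular a $\Q$-Gorenstein pair, since the very notion of Kawamata log terminality presupposes that $R$ is normal and $\Q$-Gorenstein (so that $\mathcal{J}(R,\ba^t)$ is defined via a log resolution), and triviality of the pair means exactly $\mathcal{J}(R,\ba^t)=R$. Thus both the hypothesis of Proposition \ref{mult<test} and the defining condition of ultra-$F$-regularity (which also requires a $\Q$-Gorenstein pair) are available.

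Next I would invoke Proposition \ref{mult<test}, which provides the inclusion $\mathcal{J}(R,\ba^t)\subseteq\tau_{\rm u}(R,\ba^t)$. Since $\tau_{\rm u}(R,\ba^t)$ is by construction an ideal of $R$, we also have $\tau_{\rm u}(R,\ba^t)\subseteq R$. Combining the two inclusions with $\mathcal{J}(R,\ba^t)=R$ gives $R=\mathcal{J}(R,\ba^t)\subseteq\tau_{\rm u}(R,\ba^t)\subseteq R$, hence $\tau_{\rm u}(R,\ba^t)=R$. By the definition of an ultra-$F$-regular pair, this is precisely the assertion that $(R,\ba^t)$ is ultra-$F$-regular.

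There is essentially no obstacle here; all the substance lies in Proposition \ref{mult<test} (which in turn rests on Theorem \ref{approx of mult} and Proposition \ref{fg}), and the corollary is a one-line formal consequence. The only point requiring mild care is the bookkeeping of definitions, namely reading ``Kawamata log terminal pair'' as ``normal $\Q$-Gorenstein pair with trivial multiplier ideal'', so that the normal $\Q$-Gorenstein hypothesis needed to even speak of $\tau_{\rm u}(R,\ba^t)$ and to apply Proposition \ref{mult<test} is in force.
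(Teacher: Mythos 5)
Your argument is correct and is exactly the intended one: the paper states this corollary without proof precisely because it follows immediately from Proposition \ref{mult<test} together with the definitions of Kawamata log terminal ($\mathcal{J}(R,\ba^t)=R$) and ultra-$F$-regular ($\tau_{\rm u}(R,\ba^t)=R$). Your bookkeeping of the $\Q$-Gorenstein hypothesis is also consistent with the paper's conventions.
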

\begin{lem}\label{ideal finite extension}
Let $R,S$ be local rings essentially of finite type over $\C$ such that $S$ is a finite extension of $R$, and let $\ba$ be an ideal of $R$.
Then $\ba^{\lceil t\pi ^\epsilon\rceil}S_\infty=(\ba S)^{\lceil t\pi ^\epsilon\rceil}$.
\end{lem}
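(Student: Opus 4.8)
The plan is to reduce the statement to the bounded‑number‑of‑generators behaviour of ultraproducts, using that $S$ is \emph{module}-finite over $R$. Write $\epsilon=\ulim e_p$ and set $N_p:=\lceil tp^{e_p}\rceil$, so that $\lceil t\pi^\epsilon\rceil=\ulim N_p$. By Notation (iv), applied both to the ideal $\ba$ of $R$ and to the ideal $\ba S$ of $S$, we have $\ba^{\lceil t\pi^\epsilon\rceil}=\ulim\ba_p^{N_p}$ and $(\ba S)^{\lceil t\pi^\epsilon\rceil}=\ulim(\ba S)_p^{N_p}$; since an approximation of the ideal $\ba S$ of $S$ is $\ba_pS_p$, and since $(IS)^n=I^nS$ for any ideal $I\subseteq R$ and ring map $R\to S$, this gives $(\ba S)^{\lceil t\pi^\epsilon\rceil}=\ulim\big(\ba_p^{N_p}S_p\big)$. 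Thus the lemma is equivalent to the identity of ideals of $S_\infty$
\[
\big(\ulim\ba_p^{N_p}\big)S_\infty=\ulim\big(\ba_p^{N_p}S_p\big).
\]

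Next I would record the only nontrivial input: approximation is compatible with the finite $R$-module structure on $S$. Fix $\omega_1,\dots,\omega_m\in S$ with $S=\sum_{i=1}^m R\omega_i$ and let $\omega_{ip}\in S_p$ be approximations of the $\omega_i$. Descending to a common model the finitely many relations $\omega_j\omega_k=\sum_i r^{jk}_i\omega_i$ together with expressions $s_l=\sum_i r^l_i\omega_i$ for a finite set of $R$-algebra generators $s_l$ of $S$, and then base changing (cf.\ the model arguments behind Theorem~\ref{affine model}), one obtains $S_p=\sum_{i=1}^m R_p\omega_{ip}$ for almost all $p$; in particular $S_\infty=\sum_{i=1}^m R_\infty\omega_i$ by \L o\'{s}' theorem. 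Consequently $\ba_p^{N_p}S_p=\sum_{i=1}^m\ba_p^{N_p}\omega_{ip}$ for almost all $p$, an expression whose number of summands $m$ is independent of $p$.

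Granting this, both inclusions of the displayed identity are routine. The inclusion ``$\subseteq$'' holds for an arbitrary ring extension: the left‑hand ideal is generated by elements $xs$ with $x=\ulim x_p$, $x_p\in\ba_p^{N_p}$, and $s=\ulim s_p\in S_\infty$, and then $xs=\ulim(x_ps_p)$ with $x_ps_p\in\ba_p^{N_p}S_p$ for almost all $p$, so $xs$ lies in $\ulim(\ba_p^{N_p}S_p)$, and a finite $S_\infty$-linear combination of such elements does too. For ``$\supseteq$'' take $y=\ulim y_p\in\ulim(\ba_p^{N_p}S_p)$; by the previous paragraph $y_p=\sum_{i=1}^m c_{ip}\omega_{ip}$ with $c_{ip}\in\ba_p^{N_p}$ for almost all $p$, and taking the ultraproduct over the fixed index set $\{1,\dots,m\}$ yields $y=\sum_{i=1}^m(\ulim c_{ip})\,\omega_i$ with $\ulim c_{ip}\in\ulim\ba_p^{N_p}=\ba^{\lceil t\pi^\epsilon\rceil}$ and $\omega_i\in S_\infty$, so $y\in\ba^{\lceil t\pi^\epsilon\rceil}S_\infty$. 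The main obstacle is exactly the finite‑module comparison of the second paragraph: this is where the hypothesis that $S$ is a \emph{finite} (not merely finite type) extension of $R$ is genuinely used, since without a bound on the number of generators uniform in $p$ the inclusion ``$\supseteq$'' would not follow formally from \L o\'{s}' theorem.
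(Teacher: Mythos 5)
Your proof is correct and follows essentially the same route as the paper: the inclusion $\ba^{\lceil t\pi^\epsilon\rceil}S_\infty\subseteq(\ba S)^{\lceil t\pi^\epsilon\rceil}$ is immediate, and the reverse inclusion is obtained by writing each $y_p\in\ba_p^{N_p}S_p$ in terms of a fixed finite system of $R$-module generators of $S$ and taking ultraproducts over the fixed index set. The only difference is that you make explicit (via descent to a model) the fact that the approximations of these generators generate $S_p$ over $R_p$ for almost all $p$, a point the paper's proof uses silently.
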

\begin{proof}
$\ba^{\lceil t\pi ^\epsilon\rceil}S_\infty\subseteq (\ba S)^{\lceil t\pi ^\epsilon\rceil}$ is clear. To prove the reverse inclusion, let $\zeta_1,\dots,\zeta_l$ be a system of generators of $S$ as an $R$-module. Take $a\in (\ba S)^{\lceil t\pi ^\epsilon\rceil}$.
Then for almost all $p$ , $a_p\in \ba_p^{\lceil t p^{e_p}\rceil}S_p$. Therefore, there exist $a_{ip}\in \ba_p^{\lceil t p^{e_p}\rceil}$ such that
$a_p = \sum_i a_{ip}\zeta_{ip}$. Hence, $a=\sum_i (\ulim a_{ip})\zeta_i\in\ba^{\lceil t\pi^\epsilon\rceil}S_\infty$.
\end{proof}
\begin{lem}
	Let $R$ be a $\Q$-Gorenstein normal local domain essentially of finite type over $\C$. Let $r$ be the minimum positive integer such that $rK_R$ is a Cartier divisor and let $S=\bigoplus_{i=0}^{r-1}\omega_R^{(i)}$ be a canonical cover of $R$. Then $S_\infty\cong R_\infty\otimes_R S$.
\end{lem}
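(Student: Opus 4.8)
The plan is to exploit that the canonical cover $S$ is module-finite over $R$, so that its non-standard hull is nothing but the base change of $S$ along the faithfully flat map $R\hookrightarrow R_\infty$.

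First I would record the relevant structure of $S$. Each reflexive power $\omega_R^{(i)}$ is a finitely generated $R$-module, so $S=\bigoplus_{i=0}^{r-1}\omega_R^{(i)}$ is a finitely generated $R$-module; since $rK_R$ is Cartier one may fix an isomorphism $\omega_R^{(r)}\cong R$, and then the multiplication of $S$ is the $R$-bilinear map determined by finitely many structure constants $c_{ij}^{k}\in R$ relative to a chosen system of generators $\zeta_1,\dots,\zeta_l$ of $S$ over $R$, together with a finite presentation $R^{m}\xrightarrow{\varphi}R^{l}\twoheadrightarrow S$. Passing to an approximation $R_p$ of $R$, I would take approximations $\varphi_p$ of $\varphi$ and $c_{ij,p}^{k}$ of the structure constants, and set $S_p:=\operatorname{coker}(\varphi_p)$ equipped with the $R_p$-bilinear multiplication given by the $c_{ij,p}^{k}$. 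By \L o\'{s}' theorem the $S_p$ are, for almost all $p$, module-finite $R_p$-algebras, and $\ulim S_p$ — as a ring — should be identified with the non-standard hull $S_\infty$ of $S$ (note $S$ is essentially of finite type over $\C$, being module-finite over $R$, so $S_\infty$ makes sense and is computed by any approximation). It then suffices to identify $\ulim S_p$ with $R_\infty\otimes_R S$.

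For that I would run two parallel computations. Applying the flat base change $R_\infty\otimes_R-$ to $R^{m}\xrightarrow{\varphi}R^{l}\twoheadrightarrow S$ gives $R_\infty^{m}\xrightarrow{1\otimes\varphi}R_\infty^{l}\twoheadrightarrow R_\infty\otimes_R S$, while exactness of the ultraproduct functor on finite diagrams gives $\ulim S_p=\operatorname{coker}(\varphi_\infty\colon R_\infty^{m}\to R_\infty^{l})$; and $\varphi_\infty=1\otimes\varphi$ because the entries of $\varphi$ lie in $R$ and the approximation of an element of $R$ is compatible with $R\hookrightarrow R_\infty$. This yields an $R_\infty$-module isomorphism $\ulim S_p\cong R_\infty\otimes_R S$, and under it the multiplication on $\ulim S_p$ — the ultraproduct of the $R_p$-bilinear maps with structure constants $c_{ij,p}^{k}$ — corresponds to the $R_\infty$-bilinear extension of the multiplication of $S$, i.e. to that of $R_\infty\otimes_R S$; hence the isomorphism is one of rings, and composing with $S_\infty=\ulim S_p$ gives $S_\infty\cong R_\infty\otimes_R S$.

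The step I expect to be the main obstacle is the identification of the ring-theoretic non-standard hull $S_\infty$ with $\ulim S_p$ for the $S_p$ built from a presentation and structure constants of $S$ as a module-finite $R$-algebra, rather than from an arbitrary presentation of $S$ over $\C$: this requires choosing a model of $R$ over which $S$ is already module-finite with these structure constants so that the data pass to the approximations, and then using faithful flatness of $R\hookrightarrow R_\infty$ to see that the resulting ultraproduct is independent of the choices and equals $S_\infty$. Once this is in place, everything else is a formal consequence of flatness of $R_\infty$ over $R$ together with exactness of ultraproducts on finite diagrams.
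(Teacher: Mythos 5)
Your proposal is correct and follows essentially the same route as the paper: present $S$ as a cokernel of a map of finite free $R$-modules, pass to approximations, and compare the ultraproduct of the resulting presentations with the flat base change $R_\infty\otimes_R S$. The obstacle you flag at the end is resolved in the paper exactly as you suggest, by choosing compatible models $(Z,A_Z)$ and $(Z,B_Z)$ of the underlying finite-type algebras (via Theorem \ref{affine model}) and enlarging them so that the presentation descends, after which the module isomorphism (and, as you note, compatibility of the multiplication) follows.
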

\begin{proof}
	Let $A$ be a normal $\Q$-Gorenstein domain of finite type over $\C$ such that $\omega_A^{(r)}$ is free and $\p\in\Spec A$ such that $R\cong A_\p$. Let $B\cong \bigoplus_{i=0}^{r-1}\omega_A^{(i)}$ be a canonical cover such that $B_\p\cong S$. Take $Z$ such that there exist models $(Z,A_Z)$ and $(Z,B_Z)$ as in Theorem \ref{affine model}. Since $B$ is finite $A$-module, there exist $m, n\in \N$ and an exact sequence
	\begin{equation}\label{exact sequence a}
		A^m\to A^n \to B\to 0.
	\end{equation}
	Enlarging these models if necessary, we have an exact sequence
	\begin{equation}
		A_Z^m\to A_Z^n \to B_Z \to 0.
	\end{equation}
	Tensoring this exact sequence with $(A_p)_{\p_p}$, we have an exact sequence
	\begin{equation}
		R_p^m\to R_p^n \to S_p\to 0
	\end{equation}
	for almost all $p$.
	Taking the ultraproducts, we have
	\begin{equation}
		R_\infty^m\to R_\infty^n \to S_\infty \to 0.
	\end{equation}
	Tensoring the exact sequence (\ref{exact sequence a}) with $R_\infty$, we have an exact sequence
	\begin{equation}
		R_\infty^m\to R_\infty^n \to S\otimes_R R_\infty \to 0.
	\end{equation}
	Hence, we have $S_\infty \cong S\otimes_R R_\infty$.
\end{proof}
\begin{thm}\label{canonical-cover}
Let $(R, \ba^t)$ be a $\Q$-Gorenstein pair. Assume that $S$ is the canonical cover of $R$. Then
\begin{equation}
\tau_{\rm u}(R,\ba^t)=\tau_{\rm u}(S,(\ba S)^t)\cap R.
\end{equation}
\end{thm}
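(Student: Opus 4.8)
The plan is to prove the two inclusions separately, using throughout that $R\hookrightarrow S$ is a module-finite extension which is \emph{split} as a map of $R$-modules — since $\omega_R^{(0)}=R$ is a direct summand of $S=\bigoplus_{i=0}^{r-1}\omega_R^{(i)}$ — hence pure, and that $S$ is a normal quasi-Gorenstein local domain essentially of finite type over $\C$. First I would assemble the tools for transferring ultra-tight closure relations across the extension: $S_\infty\cong S\otimes_R R_\infty$ (the lemma just proved), $\ba^{\lceil t\pi^\epsilon\rceil}S_\infty=(\ba S)^{\lceil t\pi^\epsilon\rceil}$ (Lemma \ref{ideal finite extension}), $F^\epsilon_S|_R=F^\epsilon_R$, and hence the identification $F^\epsilon_{S,*}S_\infty\cong S\otimes_R F^\epsilon_{R,*}R_\infty$ of $R$-modules. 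I would also record the norm trick: as $S$ is a domain module-finite over the domain $R$, every $c\in R^{\circ}$-companion exists, i.e. every $c\in S^{\circ}$ divides a nonzero element of $R$ in $S$, so — because $(\ba S)^{\lceil t\pi^\epsilon\rceil}$ is an ideal of $S_\infty$ — in the ultra-tight closure of zero over $S$ the test element may always be chosen in $R^{\circ}$.

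For $\tau_{\rm u}(S,(\ba S)^t)\cap R\subseteq\tau_{\rm u}(R,\ba^t)$, I would use the finitistic description of both ultra-test ideals (Corollary \ref{finitistic ultra} together with Proposition \ref{M<E}). Given a finitely generated $R$-module $M$, set $N:=S\otimes_R M$, a finitely generated $S$-module, with the $R$-split injection $j\colon M\to N$, $m\mapsto1\otimes m$. The key point is $j(\cl{M}{\ba})\subseteq\cl{N}{(\ba S)}$: if $\eta\in\cl{M}{\ba}$ with test element $c\in R^{\circ}$, then writing an arbitrary $a\in(\ba S)^{\lceil t\pi^\epsilon\rceil}=\ba^{\lceil t\pi^\epsilon\rceil}S_\infty$ as $\sum_k\zeta_k\otimes\tilde a_k$ with $\zeta_k\in S$ and $\tilde a_k\in\ba^{\lceil t\pi^\epsilon\rceil}$, and using the canonical identifications $F^\epsilon_{S,*}S_\infty\otimes_S N\cong F^\epsilon_{S,*}S_\infty\otimes_R M\cong S\otimes_R(F^\epsilon_{R,*}R_\infty\otimes_R M)$, the image of $j(\eta)$ under $\xi\mapsto ca\otimes\xi$ becomes $\sum_k\zeta_k\otimes(c\tilde a_k\otimes\eta)$, each summand of which vanishes because $\eta\in\cl{M}{\ba}$ with test element $c$ and $\tilde a_k\in\ba^{\lceil t\pi^\epsilon\rceil}$. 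Consequently, if $x\in\tau_{\rm u}(S,(\ba S)^t)\cap R$ then $x$ annihilates $\cl{N}{(\ba S)}$, so $x\cdot j(\cl{M}{\ba})=0$, whence $x\cdot\cl{M}{\ba}=0$ since $j$ is injective and $R$-linear; as $M$ was arbitrary, $x\in\tau_{\rm u}(R,\ba^t)$.

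For the reverse inclusion $\tau_{\rm u}(R,\ba^t)\subseteq\tau_{\rm u}(S,(\ba S)^t)$ (the containment in $R$ being automatic), I would pass to local cohomology. As $S$ is quasi-Gorenstein, Proposition \ref{fg} applied to $S$ with $J=S$, together with the finitistic description, gives $\tau_{\rm u}(S,(\ba S)^t)=\Ann_S\cl{{\H^d_{\m}(S)}}{(\ba S)}$, and $\H^d_{\m}(S)\cong\bigoplus_{i=0}^{r-1}\H^d_{\m}(\omega_R^{(i)})$ as $\Z/r$-graded $S$-modules. Each $[\omega_R^{(i)}]$ has finite order in $\operatorname{Cl}(R)$, so Proposition \ref{fg} applies over $R$ to each $\H^d_{\m}(\omega_R^{(i)})$; using that multiplication by a generator of $\omega_R^{(r)}\cong R$ gives compatible isomorphisms among the graded pieces and tracking how the ultra-Frobenius of $S$ acts on the grading, one reduces $x\cdot\cl{{\H^d_{\m}(S)}}{(\ba S)}=0$ to the fact — part of the finitistic description of $\tau_{\rm u}(R,\ba^t)$ — that $x$ annihilates the $\ba^t$-ultra-tight closure of zero over $R$ in each $\H^d_{\m}(\omega_R^{(i)})$. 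Concretely this is the ultraproduct incarnation — obtained through the approximations $R_p\hookrightarrow S_p$ and a \L o\'{s}-type descent as in Propositions \ref{fg} and \ref{mult<test} — of the classical transformation rule for test ideals under canonical covers in characteristic $p$. I expect this reverse inclusion to be the main obstacle: $S$ is not flat over $R$ (only reflexive, and \'etale over $R$ in codimension one), so the natural comparison $F^\epsilon_{R,*}R_\infty\otimes_R N\to F^\epsilon_{S,*}S_\infty\otimes_S N$ need not be injective for a general $S$-module $N$, and controlling it relies on $p\nmid r$ holding for almost all $p$ — which makes the approximations $R_p\hookrightarrow S_p$ canonical covers of index invertible in $R_p$, with the Frobenius pushforward of $S_p$ compatible with that of $R_p$ — after which the characteristic-$p$ statement is transported back through the ultraproduct.
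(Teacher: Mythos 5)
Your first inclusion, $\tau_{\rm u}(S,(\ba S)^t)\cap R\subseteq\tau_{\rm u}(R,\ba^t)$, is essentially correct and is a legitimate alternative to the paper's argument: you run it through the finitistic description (Corollary \ref{finitistic ultra}) and the base-change isomorphism $S_\infty\cong S\otimes_R R_\infty$ together with Lemma \ref{ideal finite extension}, showing $j(\cl{M}{\ba})\subseteq \cl{{S\otimes_R M}}{(\ba S)}$, whereas the paper instead uses that $R\hookrightarrow S$ is \'etale in codimension one to identify $E_S\cong S\otimes_R E_R$ and argues directly with $E_R\hookrightarrow E_S$. One small inaccuracy: $F^\epsilon_{S,*}S_\infty\cong S\otimes_R F^\epsilon_{R,*}R_\infty$ is not true as an isomorphism of $R$-modules (the tensor relations differ), but the identification you actually use, $F^\epsilon_{S,*}S_\infty\otimes_S(S\otimes_RM)\cong S\otimes_R(F^\epsilon_{R,*}R_\infty\otimes_RM)$, is valid via the $R_\infty$-algebra isomorphism $S_\infty\cong S\otimes_RR_\infty$, so your computation goes through.

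The reverse inclusion, which is the hard direction, has a genuine gap. Your plan is to decompose $\eta\in\cl{E_S}{(\ba S)}$ into $\Z/r\Z$-graded components and show that each $\eta_i$ lies in $\cl{{\H_\m^d(\omega_R^{(i)})}}{\ba}$, the closure computed \emph{over $R$}; that implication is exactly the crux and you never prove it. The relation you have lives in $F^\epsilon_{S,*}S_\infty\otimes_S E_S$, the relation you need lives in $F^\epsilon_{R,*}R_\infty\otimes_R\H_\m^d(\omega_R^{(i)})$, the natural comparison map goes the wrong way and, as you note, need not be injective; moreover for $i\neq 0$ the ultra-Frobenius sends degree $i$ to degree $i\pi^\epsilon\bmod r$, so one must compare $\omega_R^{(i\pi^\epsilon)}$ with a Frobenius-type power of $\omega_R^{(i)}$ --- the substance of the characteristic-$p$ cyclic-cover arguments --- and this is not supplied. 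Your fallback, ``descend to the approximations $R_p\hookrightarrow S_p$, apply the classical transformation rule for test ideals under canonical covers, and transport back through the ultraproduct,'' does not work as stated: descending an ultra-closure relation to almost all $p$ is fine (as in Proposition \ref{mult<test}), but ascending characteristic-$p$ tight-closure relations back to an ultra-closure relation requires a test element independent of $p$; this is precisely the gap between generic tight closure and ultra-tight closure, and the only uniform test elements available come from $\mathcal{J}(R,\ba^t)$ via Theorem \ref{approx of mult}, which would make the argument circular because $\tau_{\rm u}(R,\ba^t)\subseteq\mathcal{J}(R,\ba^t)$ is exactly what Theorem \ref{canonical-cover} is needed for. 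The paper avoids every degree $i\neq0$ by a socle trick: assuming $x\eta\neq0$, it picks $y\in S^\circ$ so that $xy\eta$ generates the socle of $E_S$, which coincides with the socle of $E_R$ and is concentrated in degree $0$; after replacing the test element $c\in S^\circ$ by a multiple $b\in R^\circ$, only the degree-zero projection $\pi_\infty\colon S_\infty\to R_\infty$ (compatible with the ultra-Frobenii because $p\nmid r$ for almost all $p$) is needed to show $(y\eta)_0\in\cl{E_R}{\ba}$ and reach a contradiction. You need either this reduction to degree zero or an honest proof of the graded-piece comparison for every $i$ to complete your argument.
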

\begin{proof}
First, we prove $\tau_{\rm u}(R,\ba^t)\supseteq\tau_{\rm u}(S,(\ba S)^t)\cap R$. Since $R\hookrightarrow S$ is \'{e}tale in codimension one, the support of the kernel and the cokernel of $\omega_R\otimes_R S\to \omega_S$ has codimension at least two. Hence, we have
\begin{equation}
E_R\otimes_R S\cong \operatorname{H}_{\n}^d(\omega_R\otimes_R S)\cong \operatorname{H}_{\n}^d(\omega_S)\cong E_S.
\end{equation}
Let $x\in\tau_{\rm u}(S,(\ba S)^t)\cap R$ and $\eta \in \cl{E_R}{\ba}$. Then there exists $c\in R^\circ$ such that for all $\epsilon \in {^*\N}$ and $a\in \ba^{\lceil t\pi^\epsilon \rceil}$ we have $F^\epsilon_* ca\otimes \eta=0$ in $F^\epsilon_* R\otimes_R E_R$. We have a commutative diagram
\begin{equation}
\xymatrix{
E_R \ar[r]\ar[d]_{\xi\mapsto F^\epsilon_* ca\otimes \xi} & E_S \ar[d]^{\xi\mapsto F^\epsilon_* ca\otimes \xi} \\
F^{\epsilon}_{*}R_\infty\otimes_R E_R \ar[r] & F^{\epsilon}_{*}S_\infty \otimes _S E_S
}.
\end{equation}
 Hence, $F^\epsilon_*ca\otimes \eta=0$ in $F^\epsilon_*S_\infty\otimes_S E_S$. By Lemma \ref{ideal finite extension}, we have $\eta\in \cl{E_S}{(\ba S)}$. Hence, $x\eta=0$ in $E_S$. Since $E_R\to E_S$ is injective, we have $x\eta=0$ in $E_R$. Therefore, we have $\tau_{\rm u}(R,\ba^t)\supseteq\tau_{\rm u}(S,(\ba S)^t)\cap R$ by the definition of ultra-test ideals. For the converse, take $\eta\in \cl{E_S}{(\ba S)}$ and $x\in \tau_{\rm u}(R,\ba^t)$. Suppose that $x\eta\neq 0$. Then there exists $y\in S^\circ$ such that $xy\eta$ is a generator of the socle of $E_S$. By \cite[Lemma 2.3]{geometric-interpretation}, the socle of $E_S$ as an $S$-module is equal to the socle of $E_R$ as an $R$-module. Hence, we have $xy\eta \in E_R$. Let $r\in \N$ be the minimum positive integer such that $rK_R$ is a Cartier divisor. Since $E_S\cong E_R\otimes S$ and $S$ is a $\Z/r\Z$-graded $R$-algebra, $E_S$ is a $\Z/r\Z$-graded $R$-module. Let $(y\eta)_0$ be the 0-th homogeneous component of $y\eta$. Since $\deg x=0$, we have $x(y\eta)_0=xy\eta\neq 0$. Since $S_\infty \cong R_\infty \otimes S$, $S_\infty$ is a $\Z/r\Z$-graded $R_\infty$-algebra. Let $\pi:S\to R$ and $\pi_\infty:S_\infty\to R_\infty$ be projections to the 0-th homogeneous components. Let $s\in S$ be a homogeneous element with $\deg s=i$ and $\epsilon \in {^*\N}$. Then $F^\epsilon(s)$ is a homogeneous element of degree $i \pi^{\epsilon}\mod r$, where $j\equiv i \pi^{\epsilon} \mod r$ means that $j\equiv ip^{e_p}\mod r$ for almost all $p$. Since $p$ does not divide $r$ for almost all $p$, if $i\not\equiv 0 \mod r$, we have $i\pi^{\epsilon} \not\equiv 0 \mod r$. Hence, we have a commutative diagram
\[
	\xymatrix{
		S \ar[r] \ar[d]_{\pi}& F^\epsilon_* S_\infty \ar[d]^{F^\epsilon_*\pi_\infty} \\
		R \ar[r] & F^\epsilon_*R
	}.
\]
There exists $c\in S^\circ$ such that for all $\epsilon\in {^*\N}$ and $a\in \ba^{\lceil t\pi^\epsilon \rceil}$, $F^\epsilon_* ca\otimes \eta=0$ in $F^\epsilon_*S_\infty\otimes_S E_S$. Since $R\to S$ is finite, there exists $b\in R^\circ$ such that $b$ is a multiple of $c$. Consider a commutative diagram
\[
	\xymatrix{
		E_S \cong S\otimes E_R \ar[d]_-{\pi\otimes \id} \ar[rr]^-{\xi \mapsto F^\epsilon_* ba\otimes \xi} & & F^\epsilon S_\infty \otimes_R E_R\cong F^\epsilon_*S_\infty \otimes_S E_S \ar[d]^-{F^\epsilon_*\pi_\infty\otimes \id} \\
		E_R \ar[rr]^-{\xi \mapsto F^\epsilon_*ba\otimes \xi} & & F^\epsilon_* R_\infty\otimes_R E_R
	}.
\]
We have $F^\epsilon_*ba\otimes (y\eta)_0=0$ in $F^\epsilon_*R_\infty\otimes_R E_R$. Hence, $(y\eta)_0\in \cl{E_R}{\ba}$. Hence, we have $x(y\eta)_0=0$ by the definition of ultra-test ideals. This is a contradiction. Therefore, $x\eta=0$, which completes the proof.
\end{proof}
\begin{defn}[ultra-tight closure]
Let $(R,\ba^t)$ be a pair. We define the {\it ultra-tight closure} $\operatorname{cl}_{\rm u}^{\ba^t} (I)$ of $I$ as follows:
$x\in\operatorname{cl}_{\rm u}^{\ba^t} (I)$ if and only if there exists $c\in R^{\circ}$ such that for any 
\begin{equation}
\epsilon\in\NN, a\in\ba^{\lceil t\pi^\epsilon \rceil},
\end{equation} 
we have 
\begin{equation}
caF^\epsilon (x)\in F^\epsilon(I)R_\infty.
\end{equation}
\end{defn}
\begin{rem}\label{ultra-tight rem}
For $x\in R$, $x\in\operatorname{cl}_{\rm u}^{\ba^t}(I)$ if and only if $\overline{x}\in \cl{{R/I}}{\ba}$, where $\overline{x}$ is the equivalence class of $x$ in $R/I$. Therefore, $I:(\operatorname{cl}_{\rm u}^{\ba^t}(I))=\Ann_R \cl{{R/I}}{\ba}$.
\end{rem}
\begin{defn}[{\cite[Definition 5.2]{affine}}]
Let $(R,\ba^t)$ be a pair.
The {\it generic tight closure} $\operatorname{cl}_{\rm gen}^{\ba^t} (I)$ of $I$ is defined by
\begin{equation}
\operatorname{cl}_{\rm gen}^{\ba^t} (I)=(\ulim I_p^{*\ba^t})\cap R.
\end{equation}
\end{defn}
\begin{prop}\label{gen and ultra}
Let $(R,\ba^t)$ be a $\Q$-Gorenstein pair. Then $\operatorname{cl}_{gen}^{\ba^t} (I) \subseteq \operatorname{cl}_{u}^{\ba^t}(I)$.
\end{prop}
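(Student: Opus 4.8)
The plan is to produce a single multiplier $c\in R^{\circ}$ that is a test element in every reduction $(R_p,\ba_p^{t})$ simultaneously, and then to transport the characteristic-$p$ tight closure relations up to $R_\infty$ by taking ultraproducts. The point is that a test element gives the tight closure containment for \emph{all} Frobenius powers $q=p^{e}$, not merely for $q\gg0$, which is what makes the (standard) small values of $\epsilon$ harmless.

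\emph{Choice of $c$.} Since $R$ is $\Q$-Gorenstein, $\mathcal{J}(R,\ba^{t})$ is a well-defined nonzero ideal of $R$. I would pick $c\in\mathcal{J}(R,\ba^{t})\cap R^{\circ}$ and fix an approximation $(c_{p})$ of $c$. Then $c_{p}\neq0$ for almost all $p$ because $c\neq0$ in $R_\infty$, and $c_{p}\in\mathcal{J}(R,\ba^{t})_{p}$ for almost all $p$; by Theorem \ref{approx of mult} this means $c_{p}\in\tau(R_{p},\ba_{p}^{t})$ for almost all $p$. For almost all $p$ the ring $R_{p}$ is a normal $\Q$-Gorenstein local domain essentially of finite type over $\F_{p}^{\text{alg}}$, hence excellent, so the nonzero element $c_{p}$ of the test ideal $\tau(R_{p},\ba_{p}^{t})$ is a test element for the pair $(R_{p},\ba_{p}^{t})$ in the sense of Hara--Yoshida \cite{HY}: for every ideal $J$ of $R_{p}$ and every $z\in J^{*\ba_{p}^{t}}$ one has $c_{p}z^{q}\ba_{p}^{\lceil tq\rceil}\subseteq J^{[q]}$ for \emph{all} $q=p^{e}$, $e\ge0$.

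\emph{Transfer.} Let $x\in\operatorname{cl}_{\rm gen}^{\ba^{t}}(I)$ with approximation $(x_{p})$, and fix generators $I=(y_{1},\dots,y_{m})$ with approximations $(y_{ip})$, so that $I_{p}=(y_{1p},\dots,y_{mp})$ for almost all $p$. By definition $x_{p}\in I_{p}^{*\ba_{p}^{t}}$ for almost all $p$, so the test-element property applied with $J=I_{p}$, $z=x_{p}$ gives, for almost all $p$,
\begin{equation*}
c_{p}\,x_{p}^{q}\,\ba_{p}^{\lceil tq\rceil}\subseteq I_{p}^{[q]}\qquad\text{for all }q=p^{e},\ e\ge0 .
\end{equation*}
Now fix any $\epsilon=\ulim e_{p}\in\NN$ and any $a=\ulim a_{p}\in\ba^{\lceil t\pi^{\epsilon}\rceil}=\ulim\ba_{p}^{\lceil tp^{e_{p}}\rceil}$, so that $a_{p}\in\ba_{p}^{\lceil tp^{e_{p}}\rceil}$ for almost all $p$. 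Taking $q=p^{e_{p}}$ above, we get $c_{p}\,a_{p}\,x_{p}^{p^{e_{p}}}\in I_{p}^{[p^{e_{p}}]}$ for almost all $p$, and passing to the ultraproduct yields $c\,a\,F^{\epsilon}(x)\in\ulim I_{p}^{[p^{e_{p}}]}=F^{\epsilon}(I)R_\infty$, where the last equality uses that $F^{\epsilon}\colon R\to R_\infty$ is a ring homomorphism (raising to the $p^{e_{p}}$-th power is a ring endomorphism of the characteristic-$p$ ring $R_{p}$) together with the standard fact that the ultraproduct of the ideals $I_{p}^{[p^{e_{p}}]}=(y_{1p}^{p^{e_{p}}},\dots,y_{mp}^{p^{e_{p}}})$ is generated by the ultraproducts of their generators. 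Since $c\in R^{\circ}$ does not depend on $\epsilon$ or $a$, this proves $x\in\operatorname{cl}_{\rm u}^{\ba^{t}}(I)$.

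The hard part is the choice of $c$: one needs a \emph{single} element, living in the characteristic-zero ring $R$, whose reductions are simultaneously test elements for all the pairs $(R_{p},\ba_{p}^{t})$. The multipliers handed out by the mere definition of $\ba_{p}^{t}$-tight closure depend on $p$ (and on the ideal) and in general do not assemble into an element of $R_\infty$, let alone of $R$; it is exactly Theorem \ref{approx of mult} --- identifying the reductions of the multiplier ideal with the test ideals --- together with the non-vanishing of $\mathcal{J}(R,\ba^{t})$ and the uniformity in $q$ of test elements that overcomes this. Everything else is a routine \L o\'{s}' theorem argument.
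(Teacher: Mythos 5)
Your proposal is correct and is essentially the paper's own argument: the paper likewise takes $c\in\mathcal{J}(R,\ba^t)$, uses Theorem \ref{approx of mult} to see that $c_p\in\tau(R_p,\ba_p^t)$ is a test element for almost all $p$ (so the containments $c_pa_px_p^{q}\in I_p^{[q]}$ hold for all $q$), and then passes to the ultraproduct. Your write-up merely makes explicit the points the paper leaves implicit (nonvanishing of $c_p$, uniformity in $q$, and the identification $\ulim I_p^{[p^{e_p}]}=F^{\epsilon}(I)R_\infty$).
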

\begin{proof}
Take $x\in \operatorname{cl}_{\rm gen}^{\ba^t}$ and $c \in \mathcal{J}(R,\ba^t)$. For almost all $p$, for any $e_p\in\N$, $a_p\in\ba_p^{\lceil tp^{e_p}\rceil}$ $c_p a_p F^{e_p}(x)\in F^{e_p}(I_p)R_p$. Hence, for any $\epsilon\in\NN$ and for any $a\in\ba^{\lceil t\pi^\epsilon\rceil}$, we have $caF^\epsilon(x)\in F^\epsilon(I)R_\infty$. It follows that $x\in \operatorname{cl}_{\rm u}^{\ba^t}(I)$.
\end{proof}
\begin{thm}
Let $R$ be a normal quasi-Gorenstein local domain of characteristic $p>0$ and let $\ba$ be a non-zero ideal of $R$, $t\in\R_{>0}$. Assume that $I$ is an ideal of $R$ generated by a system of parameters. Then $I^{*\ba^t}=I:\tau(R,\ba^t)$.
\end{thm}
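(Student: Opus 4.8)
The plan is to turn the claimed equality of ideals into an equality of submodules of $R/I$, to dispose of one inclusion by a general property of the test ideal, and to extract the other from the structure of $\H_\m^d(R)$ together with colon capturing.

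First I would record the standard reformulations. Writing $q=p^e$, one checks straight from the definitions that $x\in I^{*\ba^t}$ if and only if $\overline{x}\in 0^{*\ba^t}_{R/I}$, where $\overline{x}$ is the image of $x$ in $R/I$: for $\overline{x}\in R/I$ the element $ca\otimes\overline{x}\in F^e_*R\otimes_R(R/I)$ is the class of $cax^q$ in $R/I^{[q]}$, so its vanishing for all $a\in\ba^{\lceil tq\rceil}$ and $q\gg0$ is exactly $cx^q\ba^{\lceil tq\rceil}\subseteq I^{[q]}$. Since also $(I:_R\tau(R,\ba^t))/I=0:_{R/I}\tau(R,\ba^t)$, the theorem is equivalent to $0^{*\ba^t}_{R/I}=0:_{R/I}\tau(R,\ba^t)$. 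The inclusion $\subseteq$, i.e. $\tau(R,\ba^t)\cdot I^{*\ba^t}\subseteq I$, holds for every ideal $I$ and uses neither hypothesis: arguing exactly as in Proposition \ref{M<E} (this is standard in characteristic $p$, see \cite{HY}), the finitistic test ideal satisfies $\tau(R,\ba^t)=\bigcap_N\Ann_R(0^{*\ba^t}_N)$ with $N$ ranging over all finitely generated $R$-modules, so taking $N=R/I$ gives $\tau(R,\ba^t)\subseteq\Ann_R(0^{*\ba^t}_{R/I})$.

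For the reverse inclusion I would proceed as follows. Write $I=(x_1,\dots,x_d)$ with $x_1,\dots,x_d$ a system of parameters ($d=\dim R$) and fix $\overline{x}\in 0:_{R/I}\tau(R,\ba^t)$. Because $R$ is normal and quasi-Gorenstein, $\omega_R\cong R$, so local duality identifies $\H_\m^d(R)^\vee$ with $\omega_R$ and hence gives $E:=E_R(R/\m)\cong\H_\m^d(R)$. Under the natural map $R/I\to\H_\m^d(R)$, $\overline{r}\mapsto[r/(x_1\cdots x_d)]$, let $\eta$ be the image of $\overline{x}$; then $\eta\in 0:_E\tau(R,\ba^t)$. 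Every $R$-submodule of $E$ is stable under $\widehat{R}$, so Matlis duality gives $0:_E\Ann_R(L)=L$ for every $R$-submodule $L\subseteq E$; applied to $L=\bigcup_M 0^{*\ba^t}_M$ (union over finitely generated $M\subseteq E$), whose annihilator is $\tau(R,\ba^t)$ by definition, this yields $0:_E\tau(R,\ba^t)=\bigcup_M 0^{*\ba^t}_M\subseteq 0^{*\ba^t}_{\H_\m^d(R)}$. Hence there is $c\in R^{\circ}$ with $ca\otimes\eta=0$ in $F^e_*R\otimes_R\H_\m^d(R)$ for all $a\in\ba^{\lceil tq\rceil}$ and $q\gg0$. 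Unwinding $\H_\m^d(R)=\varinjlim_n R/(x_1^n,\dots,x_d^n)$, this means $cax^q(x_1\cdots x_d)^{m}\in(x_1^{q+m},\dots,x_d^{q+m})$ for some $m\ge0$, so by (iterated) colon capturing \cite{HH}
\[
cax^q\in(x_1^{q+m},\dots,x_d^{q+m}):_R(x_1\cdots x_d)^m\subseteq(x_1^q,\dots,x_d^q)^{*}=(I^{[q]})^{*}.
\]
Multiplying by a tight closure test element $c_0\in R^{\circ}$ (which exists by \cite{HH}) gives $c_0cax^q\in I^{[q]}$ for all $a\in\ba^{\lceil tq\rceil}$ and $q\gg0$, i.e. $(c_0c)x^q\ba^{\lceil tq\rceil}\subseteq I^{[q]}$, so $x\in I^{*\ba^t}$. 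When $R$ is Cohen--Macaulay the map $R/I\to\H_\m^d(R)$ is injective, one may take $m=0$, and colon capturing is not needed.

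The hard part will be this last step. On the one hand, the identification $E\cong\H_\m^d(R)$—which is where quasi-Gorensteinness is really used—together with Matlis reflexivity of $R$-submodules of $E$ is what lets one place the finitistic module $0:_E\tau(R,\ba^t)$ inside the a priori larger module $0^{*\ba^t}_{\H_\m^d(R)}$. On the other hand, in the non-Cohen--Macaulay case one genuinely needs colon capturing and the existence of test elements to absorb the factor $(x_1\cdots x_d)^m$ produced by the non-injectivity of $R/I\to\H_\m^d(R)$; this is the only place where the argument is not formal. In the situations used later (reductions of canonical covers, which are frequently Cohen--Macaulay) the proof simplifies accordingly.
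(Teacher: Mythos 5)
Your overall route is the intended one: the paper's own ``proof'' is just the remark that Huneke's Gorenstein argument \cite{Huneke} goes through, and what you do --- identify $E=E_R(R/\m)$ with $\H_\m^d(R)$ via $\omega_R\cong R$, translate $x\in I:\tau(R,\ba^t)$ into ``$\eta=[x/(x_1\cdots x_d)]$ is killed by $\tau(R,\ba^t)$'', deduce $\eta\in 0^{*\ba^t}_{\H_\m^d(R)}$, and then come back to the parameter ideal using colon capturing and a test element to absorb the factor $(x_1\cdots x_d)^m$ produced by the non-injectivity of $R/I\to\H_\m^d(R)$ --- is the natural quasi-Gorenstein adaptation of that argument (in the CM Gorenstein case Huneke can instead work at finite level with the self-duality of the Artinian quotients $R/(x_1^s,\dots,x_d^s)$, which is exactly what is unavailable here). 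The easy inclusion, the unwinding of $\H_\m^d$, and the final colon-capturing/test-element step (where it is harmless that $m$ depends on $q$ and $a$) are all correct, granting the same implicit hypotheses the paper itself suppresses (excellence, equidimensionality, existence of $\omega_R$, existence of test elements).

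There is, however, one step that is wrong as stated and is load-bearing: ``every $R$-submodule of $E$ is $\widehat{R}$-stable, so Matlis duality gives $0:_E\Ann_R(L)=L$ for every $R$-submodule $L\subseteq E$.'' Matlis duality only gives $0:_E\Ann_{\widehat{R}}(L)=L$; since $0:_E\Ann_R(L)=0:_E\bigl(\Ann_R(L)\widehat{R}\bigr)$, the claimed identity fails whenever $\Ann_{\widehat{R}}(L)$ is not extended from $R$ (e.g.\ $R=\F_p[x,y]_{(x,y)}$ and $L=0:_E(y-f(x))\widehat{R}$ with $f$ a power series transcendental over $\F_p(x)$: then $\Ann_R(L)=0$ and $0:_E\Ann_R(L)=E\neq L$). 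Applied to $L=\bigcup_M 0^{*\ba^t}_M$, your step is precisely the assertion that $\tau(R,\ba^t)\widehat{R}=\Ann_{\widehat{R}}(L)$, i.e.\ that the finitistic test ideal commutes with completion --- a genuinely delicate point in tight closure theory, not a formal consequence of duality. The step is correct verbatim if $R$ is complete, and in the paper's application ($R_p$ excellent and $F$-finite, essentially of finite type over $\F_p^{\mathrm{alg}}$) it can be repaired by completing first and using completely stable test elements; but note that the reduction to the complete case must then be checked for the inclusion you actually need, $I:\tau\subseteq I^{*\ba^t}$, which is exactly the direction sensitive to the difference between $\tau(R,\ba^t)\widehat{R}$ and $\tau(\widehat{R},(\ba\widehat{R})^t)$. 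So either add a completeness (or $F$-finiteness) hypothesis --- harmless for the use made of the theorem in Theorem \ref{ultra-test-mult-quasi-Goren} --- or supply a justification of $0:_E\tau(R,\ba^t)\subseteq 0^{*\ba^t}_E$ in the stated generality; as written this is the one real hole. (A small side remark: the closing comment that the rings arising later are ``frequently Cohen--Macaulay'' cannot be relied on --- the whole point of the quasi-Gorenstein statement is the non-CM case.)
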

\begin{proof}
The proof is same as the Gorenstein case proved in \cite{Huneke}.
\end{proof}
\begin{thm}\label{test parameter}
Let $(R,\ba^t)$ be a $\Q$-Gorenstein pair.
\begin{equation}
\tau_{\rm u}(R,\ba^t)=\bigcap_{I} I:\operatorname{cl}_{\rm u}^{\ba^t}(I)\subseteq \bigcap_{I}I:\operatorname{cl}_{\rm gen}^{\ba^t}(I),
\end{equation}
where $I$ runs through all parameter ideals of $R$.
\end{thm}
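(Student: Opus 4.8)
The plan is as follows. The displayed inclusion is immediate from Proposition \ref{gen and ultra}: for every ideal $I$ one has $\operatorname{cl}_{\rm gen}^{\ba^t}(I)\subseteq\operatorname{cl}_{\rm u}^{\ba^t}(I)$, hence $I:\operatorname{cl}_{\rm u}^{\ba^t}(I)\subseteq I:\operatorname{cl}_{\rm gen}^{\ba^t}(I)$, and intersecting over parameter ideals gives the claim. So the substance is the equality $\tau_{\rm u}(R,\ba^t)=\bigcap_{I}I:\operatorname{cl}_{\rm u}^{\ba^t}(I)$ with $I$ running over parameter ideals. By Remark \ref{ultra-tight rem}, $I:\operatorname{cl}_{\rm u}^{\ba^t}(I)=\Ann_R\cl{R/I}{\ba}$. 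The inclusion ``$\subseteq$'' is then formal: $\tau_{\rm u}(R,\ba^t)=\tau_{\rm fu}(R,\ba^t)$ by Corollary \ref{finitistic ultra}, and $\tau_{\rm fu}(R,\ba^t)$ is by definition contained in $\Ann_R\cl{M}{\ba}$ for every finitely generated module $M$, in particular for $M=R/I$.

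For the reverse inclusion I would first treat the quasi-Gorenstein case. When $R$ is quasi-Gorenstein one has $\omega_R\cong R$, hence $E=E_R(R/\m)\cong\H^d_\m(R)=\varinjlim_s R/\mathfrak q_s$ for a fixed system of parameters $x_1,\dots,x_d$, where $\mathfrak q_s=(x_1^s,\dots,x_d^s)$; the $\mathfrak q_s$ form a cofinal family of parameter ideals and the structure maps $R/\mathfrak q_s\to E$ are injective. Proposition \ref{fg} with $J=R$, together with the shift contained in its proof (a class in $\cl{M}{\ba}$, with $M$ the image of $R/\mathfrak q_s$ in $E$, is represented by an element of $\cl{R/\mathfrak q_{s+1}}{\ba}$), identifies $\cl{E}{\ba}$ with the union of the images of the $\cl{R/\mathfrak q_s}{\ba}$, so that $\tau_{\rm u}(R,\ba^t)=\Ann_R\cl{E}{\ba}=\bigcap_s\bigl(\mathfrak q_s:\operatorname{cl}_{\rm u}^{\ba^t}(\mathfrak q_s)\bigr)$. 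Since $\tau_{\rm u}(R,\ba^t)\subseteq\bigcap_{I}\bigl(I:\operatorname{cl}_{\rm u}^{\ba^t}(I)\bigr)\subseteq\bigcap_s\bigl(\mathfrak q_s:\operatorname{cl}_{\rm u}^{\ba^t}(\mathfrak q_s)\bigr)$ (the first inclusion is the easy one above, the second because the $\mathfrak q_s$ are among the parameter ideals), all three ideals coincide.

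For a general $\Q$-Gorenstein pair I would pass to the canonical cover $S=\bigoplus_{i=0}^{r-1}\omega_R^{(i)}$, where $r$ is the index. By Theorem \ref{canonical-cover} $\tau_{\rm u}(R,\ba^t)=\tau_{\rm u}(S,(\ba S)^t)\cap R$, and applying the quasi-Gorenstein case to $S$ — intersecting over the cofinal family $\{\mathfrak q S:\mathfrak q\text{ a parameter ideal of }R\}$, legitimate because a system of parameters of $R$ is one of $S$ — gives $\tau_{\rm u}(S,(\ba S)^t)=\bigcap_{\mathfrak q}\bigl(\mathfrak q S:\operatorname{cl}_{\rm u}^{(\ba S)^t}(\mathfrak q S)\bigr)$, so $\tau_{\rm u}(R,\ba^t)=\bigcap_{\mathfrak q}\Ann_R\cl{S/\mathfrak q S}{(\ba S)}$. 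Using $S_\infty\cong S\otimes_R R_\infty$ and Lemma \ref{ideal finite extension}, the $S$-module ultra-tight closure of zero in an $S$-module coincides with the $R$-module one; decomposing $S/\mathfrak q S=\bigoplus_i\omega_R^{(i)}/\mathfrak q\omega_R^{(i)}$ along the $\Z/r\Z$-grading and exploiting, exactly as in the proof of Theorem \ref{canonical-cover} (and $p\nmid r$ for almost all $p$), that the ultra-Frobenius carries degree $i$ to degree $i\pi^{\epsilon}$ with $i\equiv0$ iff $i\pi^{\epsilon}\equiv0$, one reduces the annihilation of $\cl{S/\mathfrak q S}{(\ba S)}$ by $x\in\bigcap_{I}\bigl(I:\operatorname{cl}_{\rm u}^{\ba^t}(I)\bigr)$ to annihilation of each $\cl{\omega_R^{(i)}/\mathfrak q\omega_R^{(i)}}{\ba}$; the summand $i=0$ is $\cl{R/\mathfrak q}{\ba}$, killed by hypothesis, and for $i\neq0$ one uses that $\omega_R^{(i)}$ is reflexive of rank one, that $rK_R$ is Cartier, and the functoriality of the ultra-tight closure of zero under the maps $\omega_R^{(i)}/\mathfrak q\omega_R^{(i)}\to R/\mathfrak q$ coming from $\Hom_R(\omega_R^{(i)},R)$, bringing the computation back to parameter ideals of $R$.

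The main obstacle is precisely this last reduction: showing that the parameter ultra-test ideal of $R$ already annihilates the ultra-tight closure of zero in $\H^d_\m(\omega_R^{(i)})$ for $i\neq0$, i.e. controlling the non-cyclic modules $\omega_R^{(i)}/\mathfrak q\omega_R^{(i)}$ by parameter quotients of $R$; this is where the divisorial structure of $\omega_R^{(i)}$ and the Cartier index $r$ really enter, and it is the step that would need the most care. A secondary point to keep in mind is that the identification $E\cong\H^d_\m(\omega_R)$ (and its counterpart for $S$) tacitly uses the Cohen-Macaulay property, which should be invoked (it is harmless in the situations of interest, e.g. Kawamata log terminal singularities).
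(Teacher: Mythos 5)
Your first paragraph already reproduces the paper's entire proof of this theorem: the displayed inclusion is Proposition \ref{gen and ultra} applied termwise, and the containment $\tau_{\rm u}(R,\ba^t)\subseteq\bigcap_I\bigl(I:\operatorname{cl}_{\rm u}^{\ba^t}(I)\bigr)$ is exactly Corollary \ref{finitistic ultra} combined with Remark \ref{ultra-tight rem}. That is also the only part of the statement used later (in Theorem \ref{ultra-test-mult-quasi-Goren} one only needs $\tau_{\rm u}(R,\ba^t)\subseteq\bigcap_I\bigl(I:\operatorname{cl}_{\rm gen}^{\ba^t}(I)\bigr)$ over parameter ideals). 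The paper does not carry out any canonical-cover or local-cohomology argument here; it simply reads the equality off from Corollary \ref{finitistic ultra} and Remark \ref{ultra-tight rem} (phrased over all ideals $I$). So everything in your second and third paragraphs is a different, and much longer, route aimed at the reverse containment $\bigcap_{I\,\mathrm{param}}\bigl(I:\operatorname{cl}_{\rm u}^{\ba^t}(I)\bigr)\subseteq\tau_{\rm u}(R,\ba^t)$, which is the genuinely nontrivial half of the stated equality.

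The trouble is that this half is not actually proved in your proposal. The quasi-Gorenstein case is plausible in outline (Proposition \ref{fg} with the colimit description of $E$), but in the general $\Q$-Gorenstein case you yourself leave the decisive step open: showing that an element annihilating $\cl{{R/\q}}{\ba}$ for every parameter ideal $\q$ of $R$ also annihilates the closures $\cl{{\omega_R^{(i)}/\q\omega_R^{(i)}}}{\ba}$ for $i\neq 0$, i.e.\ the non-cyclic graded pieces of $S/\q S$ for the canonical cover $S$. That is precisely where the content of the reverse inclusion lies (it is the ultraproduct analogue of comparing the parameter/finitistic test ideal with the big test ideal), so deferring it as ``the step that would need the most care'' is a genuine gap rather than a routine verification; as written, your argument only establishes $\tau_{\rm u}=\bigcap$ over parameter ideals in the quasi-Gorenstein case, plus the two easy containments in general. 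A minor additional point: your worry that $E\cong\H_\m^d(\omega_R)$ requires Cohen--Macaulayness is unnecessary, since the top-degree part of local duality gives that $\H_\m^d(\omega_R)$ is Matlis dual to $\Hom_R(\omega_R,\omega_R)\cong R$ (after completion) for any normal, hence $S_2$, local ring $R$ essentially of finite type over $\C$; the real missing ingredient is the one described above, not this identification.
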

\begin{proof}
By Corollary \ref{finitistic ultra} and Remark \ref{ultra-tight rem}, $\tau_{\rm u}(R,\ba^t)=\bigcap_{I} I:\operatorname{cl}_{\rm u}^{\ba^t}(I)$, where $I$ runs through all ideals of $R$. By Proposition \ref{gen and ultra}, the conclusion follows.
\end{proof}
\begin{thm}\label{ultra-test-mult-quasi-Goren}
Let $(R,\ba^t)$ be a quasi-Gorenstein pair. Then $\tau_{\rm u}(R,\ba^t)=\mathcal{J}(R,\ba^t)$.
\end{thm}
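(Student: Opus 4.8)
The inclusion $\mathcal{J}(R,\ba^t)\subseteq\tau_{\rm u}(R,\ba^t)$ is already available: a quasi-Gorenstein pair is in particular a $\Q$-Gorenstein pair, so Proposition~\ref{mult<test} applies. The plan is therefore to prove the reverse inclusion $\tau_{\rm u}(R,\ba^t)\subseteq\mathcal{J}(R,\ba^t)$. By Theorem~\ref{test parameter} one has $\tau_{\rm u}(R,\ba^t)\subseteq\bigcap_I I:\operatorname{cl}_{\rm gen}^{\ba^t}(I)$, the intersection taken over all parameter ideals $I$ of $R$, so it suffices to identify $\operatorname{cl}_{\rm gen}^{\ba^t}(I)$ for such $I$ and then carry out an elementary colon-ideal computation.

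First I would pin down $\operatorname{cl}_{\rm gen}^{\ba^t}(I)$. For almost all $p$ the approximation $R_p$ is a normal local domain (normality by the Proposition of Section~3, and a domain because $R_p$ is local, hence has connected spectrum), it is quasi-Gorenstein (the isomorphism $\omega_R\cong R$ spreads out over a suitable model and base-changes to $\omega_{R_p}\cong R_p$, just as in the canonical-cover lemma), and $I_p$ is a parameter ideal of $R_p$ (a system of parameters of $R$ approximates to one of $R_p$, since $R/(x_1,\dots,x_d)$ being Artinian forces the same for $R_p$ and $\dim R_p=\dim R$ for almost all $p$). Hence the characteristic $p>0$ fact that $I^{*\ba^t}=I:\tau(R,\ba^t)$ for a parameter ideal in a normal quasi-Gorenstein local domain applies to $(R_p,\ba_p^t)$ and $I_p$, and Theorem~\ref{approx of mult} gives $\tau(R_p,\ba_p^t)=\mathcal{J}(R,\ba^t)_p$; thus $I_p^{*\ba_p^t}=I_p:\mathcal{J}(R,\ba^t)_p$ for almost all $p$. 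Taking ultraproducts and using that a colon by a finitely generated ideal commutes with ultraproducts (reduce to principal colons and apply \L o\'{s}' theorem), one gets $\ulim I_p^{*\ba_p^t}=IR_\infty:\mathcal{J}(R,\ba^t)R_\infty$; intersecting with $R$ and using flatness of $R\hookrightarrow R_\infty$ (colon commutes with flat base change, and $\mathfrak{b}R_\infty\cap R=\mathfrak{b}$ by faithful flatness) yields
\begin{equation*}
\operatorname{cl}_{\rm gen}^{\ba^t}(I)=I:_R\mathcal{J}(R,\ba^t)\qquad\text{for every parameter ideal }I.
\end{equation*}

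It then remains to show $\bigcap_I I:\bigl(I:_R\mathcal{J}(R,\ba^t)\bigr)=\mathcal{J}(R,\ba^t)$, the intersection over parameter ideals; the inclusion $\supseteq$ is formal. For $\subseteq$ I would restrict to the cofinal family $I_n=(x_1^n,\dots,x_d^n)$ attached to one fixed system of parameters $x_1,\dots,x_d$ (where $d=\dim R$), and this is where the quasi-Gorenstein hypothesis is used essentially. By local duality $\H_{\m}^d(R)^{\vee}\cong\widehat{\omega_R}\cong\widehat R$, so $E=E_R(R/\m)\cong\H_{\m}^d(R)=\varinjlim_n R/I_n$ with transition maps multiplication by $x_1\cdots x_d$. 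In this description every element of $0:_E\mathcal{J}(R,\ba^t)$ is represented by some $z\in I_n:_R\mathcal{J}(R,\ba^t)$ for a suitable $n$; hence if $r\in\bigcap_n I_n:\bigl(I_n:_R\mathcal{J}(R,\ba^t)\bigr)$ then $rz\in I_n$, so $r$ annihilates $0:_E\mathcal{J}(R,\ba^t)$, and $\Ann_R\bigl(0:_E\mathcal{J}(R,\ba^t)\bigr)=\mathcal{J}(R,\ba^t)$ by Matlis duality. Therefore $\bigcap_n I_n:\bigl(I_n:_R\mathcal{J}(R,\ba^t)\bigr)=\mathcal{J}(R,\ba^t)$, a fortiori the intersection over all parameter ideals equals $\mathcal{J}(R,\ba^t)$, and combining with Proposition~\ref{mult<test} finishes the proof.

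I expect the main obstacle to be mostly careful bookkeeping: checking that quasi-Gorensteinness (not merely $\Q$-Gorensteinness) descends to almost all approximations, and verifying that forming $\operatorname{cl}_{\rm gen}^{\ba^t}$ of a parameter ideal is compatible with the colon operations and with $(-)\cap R$. The conceptual heart is the last paragraph, where the hypothesis $\omega_R\cong R$ realizes $E$ as $\varinjlim_n R/I_n$ and thereby converts the double-colon intersection into the Matlis-duality statement $\Ann_R(0:_E\mathcal{J}(R,\ba^t))=\mathcal{J}(R,\ba^t)$.
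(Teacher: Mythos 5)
Your proposal is correct in outline, but it takes a genuinely different route from the paper for the hard inclusion $\tau_{\rm u}(R,\ba^t)\subseteq\mathcal{J}(R,\ba^t)$. Both arguments begin the same way, using Theorem \ref{test parameter} to reduce to bounding $\bigcap_I I:\operatorname{cl}_{\rm gen}^{\ba^t}(I)$ (over parameter ideals $I$) above by $\mathcal{J}(R,\ba^t)$. The paper does this geometrically: it fixes a log resolution $X\to \Spec R$ with $\ba\sO_X=\sO_X(-Z)$, uses local ultra-cohomology $\UH^{d-1}$ to show that $\eta\in\Ker\delta$ (where $\delta:\H_\m^d(R)\to\H_W^d(\sO_X(tZ))$) forces $\eta_p\in\Ker\delta_p$ for almost all $p$, invokes the proof of Hara--Yoshida's Theorem 6.9 to get $\Ker\delta_p\subseteq 0^{*\ba_p^t}_{\H^d_{\m_p}(R_p)}$, and concludes from $\Ann\Ker\delta=\mathcal{J}(R,\ba^t)$, which is where quasi-Gorensteinness enters via duality on the resolution. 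You instead stay purely algebraic: you identify $\operatorname{cl}_{\rm gen}^{\ba^t}(I)=I:_R\mathcal{J}(R,\ba^t)$ exactly, by combining the characteristic-$p$ colon formula $I^{*\ba^t}=I:\tau(R,\ba^t)$ for parameter ideals in normal quasi-Gorenstein local rings (the unnumbered theorem the paper states just before Theorem \ref{test parameter}, which its own proof does not visibly use) with Theorem \ref{approx of mult}, \L o\'{s}-type commutation of colons with ultraproducts, and faithful flatness of $R\to R_\infty$; you then finish with Matlis duality via $E\cong\H^d_\m(R)=\varinjlim_n R/(x_1^n,\dots,x_d^n)$, which is where $\omega_R\cong R$ is used. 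What your route buys is elementarity (no resolution, no ultra-cohomology) and a two-sided description of $\operatorname{cl}_{\rm gen}$ of parameter ideals; what it costs is reliance on two reduction facts the paper never records explicitly and which you should write out carefully: that quasi-Gorensteinness (not merely $\Q$-Gorensteinness) passes to almost all approximations $R_p$, i.e.\ canonical modules commute with approximation, and that a system of parameters approximates to a system of parameters. Also, since $R$ need not be Cohen--Macaulay, the transition maps $R/I_n\to R/I_{n+1}$ need not be injective, so in the final step you should make explicit the (easy) argument that any element of $0:_E\mathcal{J}(R,\ba^t)$ can be pushed up the direct system to a representative lying in $I_m:_R\mathcal{J}(R,\ba^t)$ for some $m$; with that spelled out, your argument goes through.
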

\begin{proof}
We will use similar arguments as in \cite[Theorem 6.2]{Puresubrings}.
Let $X\to Y:=\Spec R$ be a log resolution of $\ba$ such that $\ba\sO_X=\sO_X(-Z)$ is invertible, and set $W=f^{-1}(\m)$.
Considering approximations, we have the corresponding morphisms $X_p \to Y_p:=\Spec R_p$, invertible sheaves $\ba_p\sO_{X_p}=\sO_{X_p}(-Z_p)$ and closed fibers $W_p=f_p^{-1}(\m)$.
Then we have the following commutative diagram. (Note that $\sO_X(tZ):=\sO_X(\lfloor tZ\rfloor)$)
\begin{equation}
\xymatrix{
& \H_\m^d(R) \ar[d]^\gamma \ar[rd]^\delta & \\
\H^{d-1}(X, \sO_X(tZ)) \ar[r] \ar[d]& \H^{d-1}(X-W,\sO_X(tZ)|_{X-W}) \ar[r] \ar[d]^{u^{d-1}}& \H_W^d(\sO_X(tZ))\\
\UH^{d-1}(X,\sO_X(tZ))\ar[r]^-{\rho_\infty^{d-1}} & \UH^{d-1}(X-W,\sO_X(tZ)|_{X-W}) & \\
}
\end{equation}
Similarly, we have the following.
\begin{equation}
\xymatrix{
& \H_{\m_p}^d(R_p) \ar[rd]^{\delta_p} \ar[d]^{\gamma_p} & \\
\H^{d-1}(X_p,\sO_{X_p}(tZ_p)) \ar[r]_-{\rho_p^{d-1}} & \H^{d-1}(X_p-W_p, \sO_{X_p}(tZ_p)|_{X_p-W_p}) \ar[r] & \H_{W_p}^d(\sO_{X_p}(tZ_p))\\
}
\end{equation}
Assume $\eta \in \Ker \delta$. Then $u^{d-1}\circ \gamma(\eta)\in \operatorname{Im} \rho_{\infty}^{d-1}$. Therefore, $\gamma_p(\eta_p)\in\operatorname{Im}\rho_p^{d-1}$ for almost all $p$. Hence $\eta_p\in\Ker\delta_p$ for almost all $p$.
By the proof of \cite[Theorem 6.9]{HY}, 
\begin{equation}
0^{*\ba_p^t}_{\H_{\m_p}^d(R_p)} \supseteq \Ker \delta_p.
\end{equation}
Let $\eta=[\frac{a}{y}]\in \Ker \delta$, where $y$ is the product of a system of parameters ${\bf x}$.
Then $a_p \in ({\bf x}_p R_p)^{*\ba_p^t}$ for almost all $p$. It follows that $a\in \operatorname{cl}_{\rm gen}^{\ba^t} ({\bf x}R)$. If $b\in {\bf x}R:\operatorname{cl}_{\rm gen}^{\ba^t}({\bf x}R)$, then $b\eta =0$.
Therefore, 
\begin{equation}
\bigcap_n ({\bf x}^nR:\operatorname{cl}_{\rm gen}^{\ba^t}({\bf x}^nR)) \subseteq \Ann \Ker \delta =\mathcal{J}(R,\ba^t).
\end{equation}
By Theorem \ref{test parameter}, the conclusion follows.
\end{proof}
\begin{cor}\label{main1}
Let $(R,\ba^t)$ be a $\Q$-Gorenstein pair.
Then
\begin{equation}
\tau_{\rm u}(R,\ba^t)=\mathcal{J}(R,\ba^t).
\end{equation}
\end{cor}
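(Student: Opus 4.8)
One inclusion is already in hand: $\mathcal{J}(R,\ba^t)\subseteq\tau_{\rm u}(R,\ba^t)$ is exactly Proposition \ref{mult<test}. So the content of the corollary is the reverse inclusion, and the plan is to reduce it to the quasi-Gorenstein case, Theorem \ref{ultra-test-mult-quasi-Goren}, by passing to the canonical cover. Let $r$ be the minimum positive integer with $rK_R$ Cartier and let $S=\bigoplus_{i=0}^{r-1}\omega_R^{(i)}$ be the canonical cover of $R$; then $S$ is a normal local domain essentially of finite type over $\C$ with $\omega_S\cong\sO_S$, so $(S,(\ba S)^t)$ is a quasi-Gorenstein pair, in particular $S$ is $\Q$-Gorenstein of index one. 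By Theorem \ref{canonical-cover} we have $\tau_{\rm u}(R,\ba^t)=\tau_{\rm u}(S,(\ba S)^t)\cap R$, and by Theorem \ref{ultra-test-mult-quasi-Goren} applied to $(S,(\ba S)^t)$ we have $\tau_{\rm u}(S,(\ba S)^t)=\mathcal{J}(S,(\ba S)^t)$. Hence $\tau_{\rm u}(R,\ba^t)=\mathcal{J}(S,(\ba S)^t)\cap R$, so the corollary will follow once I show $\mathcal{J}(S,(\ba S)^t)\cap R\subseteq\mathcal{J}(R,\ba^t)$ (combined with Proposition \ref{mult<test} this forces equality throughout).

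To prove this last inclusion I would invoke the fact that forming the canonical cover does not change the multiplier ideal. Concretely, since $\pi\colon\Spec S\to\Spec R$ is finite, surjective and \'etale in codimension one with $K_S=\pi^*K_R$, one has $\mathcal{J}(S,(\ba S)^t)=\mathcal{J}(R,\ba^t)\,\sO_S$: take a log resolution $f\colon Y\to\Spec R$ of $\ba$, normalize the base change $Y\times_{\Spec R}\Spec S$ and resolve, and check that the ramification divisor of the induced cover cancels the contribution of the relative canonical divisor, so that the round-ups and pushforwards defining the two multiplier ideals coincide. Intersecting $\mathcal{J}(R,\ba^t)\sO_S$ with the degree-zero summand $R$ of the $\Z/r\Z$-graded $R$-algebra $S$ recovers $\mathcal{J}(R,\ba^t)$, giving the claim. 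Alternatively one can argue through approximations: by Theorem \ref{approx of mult} (applied to both $R$ and $S$, which are $\Q$-Gorenstein), $\mathcal{J}(R,\ba^t)=\ulim\tau(R_p,\ba_p^t)\cap R$ and $\mathcal{J}(S,(\ba S)^t)=\ulim\tau(S_p,\ba_p^t S_p)\cap S$; choosing a model over which the decomposition $S=\bigoplus_{i=0}^{r-1}\omega_R^{(i)}$ descends, one sees that $S_p$ is the canonical cover of $R_p$ and $R_p\hookrightarrow S_p$ is split, hence pure, for almost all $p$, so the analogous positive-characteristic relation for test ideals yields $\tau(S_p,\ba_p^t S_p)\cap R_p\subseteq\tau(R_p,\ba_p^t)$, and taking ultraproducts and intersecting with $R$ gives $\mathcal{J}(S,(\ba S)^t)\cap R\subseteq\mathcal{J}(R,\ba^t)$.

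Assembling the pieces, $\mathcal{J}(R,\ba^t)\subseteq\tau_{\rm u}(R,\ba^t)=\mathcal{J}(S,(\ba S)^t)\cap R\subseteq\mathcal{J}(R,\ba^t)$, so all three coincide. I expect the only genuinely non-formal step to be the canonical-cover comparison $\mathcal{J}(S,(\ba S)^t)\cap R\subseteq\mathcal{J}(R,\ba^t)$: in the geometric route it hides the standard but somewhat delicate discrepancy bookkeeping for cyclic covers, while in the ultraproduct route it requires the positive-characteristic analogue together with the (easy but necessary) verification that an element of $R$ admits approximations lying in $R_p$ for almost all $p$ and that taking $\cap R$ commutes with $\ulim$, which is where faithful flatness of $R\hookrightarrow R_\infty$ and $S\hookrightarrow S_\infty$ enters. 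Everything else is bookkeeping with Proposition \ref{mult<test} and Theorems \ref{canonical-cover}, \ref{ultra-test-mult-quasi-Goren} and \ref{approx of mult}.
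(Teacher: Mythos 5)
Your proposal is correct and takes essentially the same route as the paper: the paper's own proof of this corollary is just the combination of Theorem \ref{canonical-cover} and Theorem \ref{ultra-test-mult-quasi-Goren} (together with Proposition \ref{mult<test} for the inclusion $\mathcal{J}(R,\ba^t)\subseteq\tau_{\rm u}(R,\ba^t)$), assembled exactly as you do. The canonical-cover comparison $\mathcal{J}(S,(\ba S)^t)\cap R\subseteq\mathcal{J}(R,\ba^t)$ that you isolate is precisely the step the paper leaves implicit, and either of your two sketches (discrepancy bookkeeping for the \'etale-in-codimension-one cover, or the approximation/test-ideal argument via Theorem \ref{approx of mult}) is a standard way to supply it.
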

\begin{proof}
It follows from Theorem \ref{canonical-cover}, Theorem \ref{ultra-test-mult-quasi-Goren}.
\end{proof}
The following theorem is a non-standard analytic analog of {\cite[Proposition 1.12]{HY}}.
\begin{thm}\label{pure-test}
Let $R, S$ be local domains essentially of finite type over $\C$, let $\ba$ be a non-zero principal ideal of $R$ and let $t\in\R_{>0}$. Assume $R\hookrightarrow S$ is a pure ring extension. Then 
\begin{equation}
\tau_{\rm u}(S,(\ba S)^t)\cap R\subseteq \tau_{\rm u}(R,\ba^t).
\end{equation}
\end{thm}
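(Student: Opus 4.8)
The plan is to show directly that any $x\in\tau_{\rm u}(S,(\ba S)^t)\cap R$ annihilates $\cl{E_R}{\ba}$, by transporting ultra-tight-closure relations along the pure extension $R\hookrightarrow S$. Fix $\eta\in\cl{E_R}{\ba}$ together with a multiplier $c\in R^{\circ}$ witnessing it, so that $ca\otimes\eta=0$ in $F^{\epsilon}_{*}R_\infty\otimes_R E_R$ for all $\epsilon\in\NN$ and all $a\in\ba^{\lceil t\pi^{\epsilon}\rceil}$. Since $R\hookrightarrow S$ is pure, the natural map $E_R\to E_R\otimes_R S$ is injective (a homomorphism $\varphi$ of $R$-modules is pure exactly when $\varphi\otimes_R E_R$ is injective, as was used in the study of ultra-$F$-regularity). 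Write $\eta_S\in E_R\otimes_R S$ for the image of $\eta$, and note $c\in R^{\circ}\subseteq S^{\circ}$ because $R\hookrightarrow S$ is injective. If we can show $x\eta_S=0$ in $E_R\otimes_R S$, then injectivity of $E_R\to E_R\otimes_R S$ gives $x\eta=0$, hence $x\in\Ann_R\cl{E_R}{\ba}=\tau_{\rm u}(R,\ba^t)$.

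The first real step is to check that $\eta_S\in\cl{E_R\otimes_R S}{(\ba S)}$, again with multiplier $c$. Here the hypothesis that $\ba$ is principal is essential: it forces $(\ba S)^{\lceil t\pi^{\epsilon}\rceil}=\ba^{\lceil t\pi^{\epsilon}\rceil}S_\infty$ for every $\epsilon\in\NN$ (the approximations $\ba_p^{\lceil tp^{e_p}\rceil}$ are principal, so there is no obstruction to commuting the ultraproduct past the extension, exactly as in Lemma~\ref{ideal finite extension} but without finiteness), so every $a'\in(\ba S)^{\lceil t\pi^{\epsilon}\rceil}$ is an $S_\infty$-multiple of the image of some $a\in\ba^{\lceil t\pi^{\epsilon}\rceil}$. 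Combining this with the base-change isomorphism $F^{\epsilon}_{*}S_\infty\otimes_S(E_R\otimes_R S)\cong F^{\epsilon}_{*}S_\infty\otimes_R E_R$ and the functoriality of $-\otimes_R E_R$ along the ring homomorphism $R_\infty\to S_\infty$ (which is compatible with the ultra-Frobenii), each relation $ca\otimes\eta=0$ over $R$ pushes forward to $ca'\otimes\eta_S=0$ in $F^{\epsilon}_{*}S_\infty\otimes_S(E_R\otimes_R S)$; replacing $c$ by a finite product of valid multipliers if necessary (still in $R^{\circ}$ since $R$ is a domain) handles any bookkeeping when several $a$'s occur at once.

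It then remains to deduce $x\eta_S=0$ from $x\in\tau_{\rm u}(S,(\ba S)^t)$. Since $x$ annihilates the ultra-tight closure of zero in every finitely generated $S$-module (the inclusion $\tau_{\rm u}\subseteq\tau_{\rm fu}$ being valid for any pair, with the same proof as in the $\Q$-Gorenstein case), it would be enough to know that $\eta_S$ lies in the ultra-tight closure of zero in \emph{some} finitely generated $S$-submodule of $E_R\otimes_R S$. I expect this to be the main obstacle. The module $E_R\otimes_R S$ is neither finitely generated over $S$ nor supported only at $\m_S$, so it cannot simply be embedded into copies of $E_S$. The natural route is to write $E_R=\varinjlim_m(0:_{E_R}\m_R^{m})$, so that $E_R\otimes_R S=\varinjlim_m\bigl((0:_{E_R}\m_R^{m})\otimes_R S\bigr)$ is an increasing union of finitely generated $S$-modules, and then to argue that the relations witnessing $\eta_S\in\cl{E_R\otimes_R S}{(\ba S)}$ already hold at a bounded finite level, so that $\eta_S\in\cl{(0:_{E_R}\m_R^{m_0})\otimes_R S}{(\ba S)}$ for some $m_0$. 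On the side of $R$ this boundedness amounts to saying that the ultra-tight closure of zero in $E_R$ is detected on finitely generated submodules — precisely the type of finiteness furnished by Proposition~\ref{fg} and the behaviour of tight closure in local cohomology underlying it. Extracting and using this uniformity in the present generality, and pinning down exactly which conditions on $R$ and $S$ it requires, is the technical heart of the argument; once it is in place, the three ingredients above combine to give $x\eta=0$, and hence $\tau_{\rm u}(S,(\ba S)^t)\cap R\subseteq\tau_{\rm u}(R,\ba^t)$.
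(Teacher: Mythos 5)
Your transport mechanism is the same one the paper uses: purity of $R\hookrightarrow S$ gives injectivity after tensoring, and principality of $\ba$ gives $(\ba S)^{\lceil t\pi^{\epsilon}\rceil}=\ba^{\lceil t\pi^{\epsilon}\rceil}S_\infty$, so ultra-tight-closure relations over $R$ push forward to relations over $S$. But you apply it to $E_R$ itself, and that is exactly where the argument breaks down, as you yourself flag: $E_R\otimes_R S$ is not a finitely generated $S$-module, so membership of $x$ in $\tau_{\rm u}(S,(\ba S)^t)=\Ann_S\cl{E_S}{(\ba S)}$ gives no hold on $\cl{{E_R\otimes_R S}}{(\ba S)}$. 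The ``bounded finite level'' claim you propose — that $\eta\in\cl{E_R}{\ba}$ already lies in $\cl{M}{\ba}$ for some finitely generated $M\subseteq E_R$ — is not a formal consequence of $E_R=\varinjlim_m(0:_{E_R}\m^m)$: the defining conditions involve one witness $c$ but infinitely many $\epsilon\in\NN$ and $a\in\ba^{\lceil t\pi^{\epsilon}\rceil}$, and nothing forces them to be detected at a bounded level. That statement is precisely the content of Proposition \ref{fg} and Corollary \ref{finitistic ultra}, which the paper proves by comparison with characteristic-$p$ tight closure in local cohomology and under a $\Q$-Gorenstein hypothesis (absent from the statement of Theorem \ref{pure-test} itself). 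So as written your proof has a genuine unfilled gap at its decisive step, and the route you sketch for filling it would have to reprove that finiteness result in a setting where the paper only has it for $\Q$-Gorenstein pairs.

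The paper closes this gap by reorganizing the argument so that the hard finiteness input sits only on the $R$-side, where it is already established. It runs your transport argument not for $E_R$ but for an arbitrary finitely generated $R$-module $M$: purity gives $K_{c,\epsilon,a}\hookrightarrow L_{c,\epsilon,a}$ for the relevant kernels, principality of $\ba$ matches the families of multipliers, and hence $\cl{M}{\ba}\hookrightarrow\cl{{S\otimes_R M}}{(\ba S)}$. Since $S\otimes_R M$ is finitely generated over $S$, the easy finitistic inclusion $\tau_{\rm u}(S,(\ba S)^t)\subseteq\tau_{\rm fu}(S,(\ba S)^t)$ (Proposition \ref{M<E} and its corollary, whose proof needs no $\Q$-Gorenstein hypothesis) shows $x$ annihilates $\cl{{S\otimes_R M}}{(\ba S)}$, hence $\cl{M}{\ba}$; quantifying over all finitely generated $M$ puts $x$ in $\tau_{\rm fu}(R,\ba^t)$, and only then is Corollary \ref{finitistic ultra} invoked to identify this with $\tau_{\rm u}(R,\ba^t)$. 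If you restructure your argument this way — quantify over finitely generated $M$ first and cite Corollary \ref{finitistic ultra} at the end, instead of attempting a limit argument inside $E_R\otimes_R S$ — your proof becomes the paper's. (A minor additional point: your remark about ``replacing $c$ by a finite product of valid multipliers'' is unnecessary, since by definition a single $c$ works simultaneously for all $\epsilon$ and $a$.)
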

\begin{proof}
Let $M$ be a finite $R$-module.
For 
\begin{equation}
c\in R^\circ, \epsilon\in\NN, a\in R_\infty, K_{c,\epsilon,a}:=\Ker (M\to F^\epsilon_{*}R_\infty\otimes_R M; \eta\mapsto ca\otimes \eta).
\end{equation}
 Similarly, for 
\begin{equation}
c\in S^\circ, \epsilon\in\NN, a\in S_\infty, L_{c,\epsilon,a}=\Ker (S\otimes_R M\to F^\epsilon_{*}S_\infty\otimes_SS\otimes_R M;\eta\mapsto ca\otimes \eta).
\end{equation}
 If $c\in R^\circ, \epsilon\in\NN, a\in R_\infty$, we have an injection $K_{c,\epsilon,a}\hookrightarrow L_{c,\epsilon,a}$ via the injection $M\hookrightarrow S\otimes_R M$. Since $\ba$ is principal, $\ba^{\lceil t\pi^\epsilon\rceil}S_\infty =(\ba S)^{\lceil t\pi^\epsilon \rceil}$, we have 
\begin{equation}
\bigcap_{\epsilon\in\NN}\bigcap_{a\in\ba^{\lceil t\pi^\epsilon\rceil}}K_{c,\epsilon,a}\hookrightarrow\bigcap_{\epsilon\in \NN}\bigcap_{a\in (\ba S)^{\lceil t\pi^\epsilon\rceil}} L_{c,\epsilon,a}.
\end{equation}
Hence,
\begin{equation}
\cl{M}{\ba}=\bigcup_{c\in R^\circ}\bigcap_{\epsilon\in\NN}\bigcap_{a\in\ba^{\lceil t\pi^\epsilon\rceil}}K_{c,\epsilon,a}\hookrightarrow 
\bigcup_{c\in S^\circ}\bigcap_{\epsilon\in \NN}\bigcap_{a\in (\ba S)^{\lceil t\pi^\epsilon\rceil}} L_{c,\epsilon,a}=\cl{{S\otimes_R M}}{(\ba S)}.
\end{equation}
Therefore, 
\begin{equation}
\tau_{\rm u}(S,(\ba S)^t)\subseteq \Ann_S(\cl{{S\otimes_R M}}{(\ba S)})\cap R\subseteq \Ann_R(\cl{M}{\ba}).
\end{equation}
 By Corollary \ref{finitistic ultra},
\begin{equation}
\tau_{\rm u}(S,(\ba S)^t)\cap R\subseteq \bigcap_{M} \Ann_R(\cl{M}{\ba})=\tau_{\rm u}(R,\ba^t),
\end{equation}
where $M$ runs through all finitely generated $R$-modules.

\end{proof}
\begin{defn}[{\cite[Definition 9.3.60]{La}}]
Assume that $X$ is a normal variety over $\C$, $\Delta$ is a $\Q$-Weil divisor such that $K_X+\Delta$ is a $\Q$-Cartier divisor, $\ba$ is a non-zero ideal sheaf of $\sO_X$ and $t$ is a positive real number. Take a log resolution $\mu:Y\to X$ of $\ba$ such that $\ba \sO_Y=\sO_Y(-Z)$. Then the {\it multiplier ideal} $\mathcal{J}((X,\Delta),\ba^t)$ of $((X,\Delta),\ba^t)$ is defined by
\begin{equation}
\mathcal{J}((X,\Delta),\ba^t)=\mu_*\sO_Y(\lceil K_Y-\mu^*(K_X+\Delta)-tZ\rceil).
\end{equation} 
\end{defn}
\begin{defn}[\cite{de-Fernex-Hacon}]\label{non-Q-Gorenstein}
If $S$ is a (not necessarily $\Q$-Gorenstein) normal local domain essentially of finite type over $\C$, $\ba$ is a non-zero ideal of $S$ and $t\in\R_{>0}$, the {\it multiplier ideal} of $(S,\ba^t)$ is defined as the following.
\begin{equation}
\mathcal{J}(S,\ba ^t):=\sum_{\Delta}\mathcal{J}((\Spec S,\Delta),\ba^t),
\end{equation}
where $\Delta$ runs through all effective $\Q$-divisors such that $K_X+\Delta$ is $\Q$-Cartier.
If this multiplier ideal is trivial, then $(S,\ba^t)$ is said to be {\it Kawamata log terminal in the sense of de Fernex-Hacon}.
\end{defn}

\begin{cor}\label{main2}
Let $R, S$ be normal local domains essentially of finite type over $\C$, let $\ba$ be a non-zero principal ideal of $R$ and let $t\in\R_{>0}$. Assume $R$ is $\Q$-Gorenstein and $R\hookrightarrow S$ is a pure ring extension. Then $\mathcal{J}(S,(\ba S)^t)\cap R\subseteq \mathcal{J}(R,\ba^t)$. In particular if $S$ is Kawamata log terminal in the sense of de Fernex-Hacon, so does $R$. 
\end{cor}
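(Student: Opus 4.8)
The plan is to reduce the statement to Theorem \ref{pure-test} by passing through the canonical cover so that the $\Q$-Gorenstein hypothesis on $R$ can be exploited via Corollary \ref{main1}. First I would dispose of the left-hand side: since $R$ is $\Q$-Gorenstein, the de Fernex--Hacon multiplier ideal $\mathcal{J}(R,\ba^t)$ of Definition \ref{non-Q-Gorenstein} agrees with the classical multiplier ideal $\mathcal{J}(\Spec R,\ba^t)$ of the $\Q$-Gorenstein pair $(R,\ba^t)$, because taking $\Delta = 0$ already realizes the sum and no other choice of $\Delta$ can enlarge it (this is standard, e.g. from $\mathcal{J}((\Spec R,\Delta),\ba^t)\subseteq \mathcal{J}(\Spec R,\ba^t)$ when $K_R$ is already $\Q$-Cartier). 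So by Corollary \ref{main1} it suffices to prove $\mathcal{J}(S,(\ba S)^t)\cap R \subseteq \tau_{\rm u}(R,\ba^t)$.

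Next I would handle the right-hand intersection. The point is that $\mathcal{J}(S,(\ba S)^t)$ is a sum of classical multiplier ideals $\mathcal{J}((\Spec S,\Delta),(\ba S)^t)$ over effective $\Q$-divisors $\Delta$ with $K_S+\Delta$ being $\Q$-Cartier; it therefore suffices to show, for each such $\Delta$,
\begin{equation}
\mathcal{J}((\Spec S,\Delta),(\ba S)^t)\cap R \subseteq \tau_{\rm u}(R,\ba^t).
\end{equation}
Fix one such $\Delta$. I would then want to compare $\mathcal{J}((\Spec S,\Delta),(\ba S)^t)$ with the ultra-test ideal of the $\Q$-Gorenstein pair one gets after passing to an index-one cover of $(S,\Delta)$: let $S'$ be the canonical cover associated to $K_S+\Delta$, so $(S',(\ba S')^t)$ is a $\Q$-Gorenstein (indeed quasi-Gorenstein) pair, and by the argument of Theorem \ref{canonical-cover} together with Corollary \ref{main1} one has a containment relating $\tau_{\rm u}(S',(\ba S')^t)\cap S$ with $\mathcal{J}((\Spec S,\Delta),(\ba S)^t)$. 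Combining this with Theorem \ref{pure-test} applied along the composite pure extension $R\hookrightarrow S\hookrightarrow S'$ (purity is preserved under composition, and $\ba$ stays principal) yields
\begin{equation}
\mathcal{J}((\Spec S,\Delta),(\ba S)^t)\cap R \subseteq \tau_{\rm u}(S',(\ba S')^t)\cap R \subseteq \tau_{\rm u}(R,\ba^t),
\end{equation}
as desired. Summing over $\Delta$ and invoking Corollary \ref{main1} on the $R$-side gives $\mathcal{J}(S,(\ba S)^t)\cap R\subseteq \mathcal{J}(R,\ba^t)$.

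Finally, the \textquotedblleft in particular\textquotedblright\ clause is immediate: if $(S,(\ba S)^t)$ is Kawamata log terminal in the sense of de Fernex--Hacon, then $\mathcal{J}(S,(\ba S)^t) = S$, so $\mathcal{J}(S,(\ba S)^t)\cap R = R \subseteq \mathcal{J}(R,\ba^t)$, forcing $\mathcal{J}(R,\ba^t)=R$, i.e. $\Spec R$ is Kawamata log terminal. I expect the main obstacle to be the middle step: making rigorous the comparison between $\mathcal{J}((\Spec S,\Delta),(\ba S)^t)$ and $\tau_{\rm u}$ of the index-one cover when $S$ is not assumed $\Q$-Gorenstein and $\Delta\ne 0$ — one must check that the cyclic-cover/$\Z/r\Z$-grading bookkeeping in the proof of Theorem \ref{canonical-cover} goes through with a boundary divisor, and that the relevant cover is again essentially of finite type over $\C$ and normal so that all the earlier machinery (non-standard hulls, Theorem \ref{affine model}, local ultra-cohomology) applies. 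An alternative, if one wishes to avoid the boundary entirely, is to quote the characteristic-zero statement $\mathcal{J}(S,(\ba S)^t)\cap R\subseteq \mathcal{J}(R,\ba^t)$ for $\Q$-Gorenstein $R$ directly from the resolution-theoretic side, but phrasing it through $\tau_{\rm u}$ keeps the paper self-contained.
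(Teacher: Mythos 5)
Your opening step (for $\Q$-Gorenstein $R$ the de Fernex--Hacon ideal agrees with the classical one, so Corollary \ref{main1} applies on the $R$-side) and your treatment of the ``in particular'' clause are fine, and reducing to Theorem \ref{pure-test} is indeed the right strategy. The genuine gap is exactly the middle step you flag. If $\Delta\neq 0$, the index-one cover $\pi\colon\Spec S'\to\Spec S$ of $K_S+\Delta$ is \emph{not} quasi-Gorenstein (nor even $\Q$-Gorenstein) in general: one only gets that $K_{S'}+\Delta'$ is Cartier, where $\Delta'$ is the boundary determined by $\pi^*(K_S+\Delta)=K_{S'}+\Delta'$, and $\pi$ ramifies in codimension one along $\Delta$. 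Hence the proof of Theorem \ref{canonical-cover} does not carry over: it uses crucially that the canonical cover is \'etale in codimension one (to identify $E_R\otimes_R S\cong E_S$) and that the cover is quasi-Gorenstein, and the paper has no notion of an ultra-test ideal $\tau_{\rm u}(S',\Delta';(\ba S')^t)$ with a boundary divisor with which to even state, let alone prove, the comparison between $\mathcal{J}((\Spec S,\Delta),(\ba S)^t)$ and $\tau_{\rm u}(S',(\ba S')^t)\cap S$. So the containment $\mathcal{J}((\Spec S,\Delta),(\ba S)^t)\cap R\subseteq\tau_{\rm u}(S',(\ba S')^t)\cap R$ is unsubstantiated, and making it rigorous would require developing a divisor-pair version of Sections 4--5 from scratch.

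The paper's route avoids covers on the $S$-side entirely, and you should notice that the detour is unnecessary: the definitions of ultra-tight closure of zero and of $\tau_{\rm u}$ never use $\Q$-Gorensteinness, so $\tau_{\rm u}(S,(\ba S)^t)$ makes sense for your $S$ as is. The inclusion $\mathcal{J}(S,(\ba S)^t)\subseteq\tau_{\rm u}(S,(\ba S)^t)$ then holds for arbitrary normal $S$ by running the approximation argument of Theorem \ref{approx of mult} and Proposition \ref{mult<test} on each summand $\mathcal{J}((\Spec S,\Delta),(\ba S)^t)$: its reduction modulo $p\gg0$ is contained in $\tau(S_p,(\ba S)_p^t)$ (by the divisor-pair generalization of Hara--Yoshida), and that mod-$p$ membership is all the argument of Proposition \ref{mult<test} needs. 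With this, the proof is three lines: $\mathcal{J}(S,(\ba S)^t)\cap R\subseteq\tau_{\rm u}(S,(\ba S)^t)\cap R\subseteq\tau_{\rm u}(R,\ba^t)=\mathcal{J}(R,\ba^t)$, using Theorem \ref{pure-test} directly for the pure extension $R\hookrightarrow S$ and Corollary \ref{main1} for $R$. So your proposal is not merely a different route; as written it rests on an unproved (and, without a boundary-divisor theory, unstatable) comparison, whereas the key observation it misses is that no $\Q$-Gorenstein hypothesis on $S$ is needed anywhere on the $S$-side.
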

\begin{proof}
Even if $S$ is not $\Q$-Gorenstein, $\mathcal{J}(S,\ba^t)\subseteq\tau_{\rm u}(S,\ba^t)$ by the same argument as in Theorem \ref{approx of mult} and Proposition \ref{mult<test}. Then the conclusion follows from Theorem \ref{pure-test}.
\end{proof}
\begin{rem}
We cannot use the usual reduction argument to conclude Corollary \ref{main2} because purity is not necessarily preserved under reduction modulo $p>0$. For example, you can take $t=2$, $m=n=3$ in Theorem 1.1 (a) in \cite{HJPS}.
\end{rem}
\begin{ques}
Can we prove Corollary \ref{main2} without using ultraproducts?
\end{ques}
\begin{rem}
	Let $R$ and $S$ be as in Corollary \ref{main2}. Quite recently Zhuang \cite{Zhuang} proved that if $S$ is Kawamata log terminal in the sense of de Fernex-Hacon, then so is $R$, without assuming that $R$ is $\Q$-Gorenstein. 
\end{rem}



\begin{data availability}
\textup{Data sharing not applicable to this article as no datasets were generated or analyzed during the current study.}
\end{data availability}

\end{document}